\documentclass[11pt]{amsart}

\usepackage[utf8]{inputenc}
\usepackage{amsmath}
\usepackage{amsthm}
\usepackage{amssymb}
\usepackage{color}
\usepackage{hyperref}
\usepackage{stackrel}
\usepackage{ulem}

\usepackage{amsrefs}
\usepackage{xcolor}
\usepackage{enumerate}
\usepackage{verbatim}
\usepackage{cancel}
\usepackage{setspace}
\usepackage{mathtools}
\usepackage{comment}

\newtheorem{theorem}{Theorem}[section]
\newtheorem{lemma}[theorem]{Lemma}
\newtheorem{example}[theorem]{Example}
\newtheorem{definition}[theorem]{Definition}

\newtheorem{proposition}[theorem]{Proposition}
\newtheorem{remark}[theorem]{Remark}
\newtheorem{corollary}[theorem]{Corollary}

\DeclareMathOperator{\supp}{supp}
\DeclareMathOperator{\dist}{dist}
\DeclareMathOperator{\Hess}{Hess}
\DeclareMathOperator{\spann}{span}

\DeclareMathOperator{\epi}{epi}

\DeclareMathOperator{\x}{\boldsymbol{x}}
\DeclareMathOperator{\y}{\boldsymbol{y}}

\begin{document}
	
	\title[Mollifier smoothings of strongly convex $C^0$-Finsler structures]{Mollifier smoothings of strongly convex $C^0$-Finsler structures}
	
	\author{Ryuichi Fukuoka}
	\address{Department of Mathematics, State University of Maring\'a, Jd. Universitário, Maring\'a, 87020-900, Paraná, Brazil \\ email: rfukuoka@uem.br}
	
	\author{Anderson Macedo Setti}
	\address{Department of Exact and Earth Sciences, Federal University of Mato Grosso, Boa Esperança, Cuiabá, 78060-900, Mato Grosso, Brazil, email: anderson.setti@ufmt.br}
	
	\date{\today}

\begin{abstract}
	Let $M$ be a smooth manifold and $TM$ its tangent bundle. 
	A $C^0$-Finsler structure of $M$ is a continuous function $F: TM \to [0, \infty)$ such that restricted to each tangent space is an asymmetric norm.
	A mollifier smoothing of $F$ is a family of Finsler structures $F_\varepsilon: TM \to [0, \infty)$, parameterized by $\varepsilon > 0$ and constructed using the standard mollifier, such that $F_\varepsilon \to F$ uniformly on compact subsets as $\varepsilon \to 0$.
	In this work, we construct two mollifier smoothings of $F$. In the first, we assume that $F: TM \to [0, \infty)$ is strongly convex, that is, that $F$ restricted to each tangent space is strongly convex.
	In the second, we assume that $M=G$ is a Lie group and that $F:TG \to [0, \infty)$ is strongly convex and left-invariant, and in this case, $F_\varepsilon$ is also left-invariant.
	In both cases, we prove that if $F$ is a Finsler structure, objects such as the fundamental tensor, the Chern, Cartan, Hashiguchi and Berwald connections, and the flag curvature of $(M, F_\varepsilon)$ converge uniformly on compact subsets to the corresponding objects of $(M, F)$ as $\varepsilon \to 0$.
\end{abstract}

	\keywords{Strongly convex $C^0$-Finsler structure, Finsler structure, mollifier smoothings, connections, flag curvature, Lie group}

	\subjclass[2020]{53B40, 53C60}

	\maketitle


\section{Introduction}

Let $M$ be an $n$-dimensional smooth manifold. We denote by $T_xM$ and $T^*_xM$ the tangent and cotangent spaces at $x \in M$, respectively. The tangent and cotangent bundles of $M$ are denoted by $TM$ and $T^*M$, respectively.

A Finsler structure of $M$ is a function $F:TM \to [0, \infty)$ which is smooth on the slit tangent bundle $TM\setminus 0 := \{(x, y) \in TM : y \neq 0\}$ and such that its restriction to each tangent space $F(x, \cdot): T_xM \to [0, \infty)$ is a Minkowski norm. 
In the literature, there are two alternative definitions of Finsler structure when $F:TM \to [0, \infty)$ is a continuous function: in the second definition, $F(x,\cdot)$ is a norm (see \cite{Burago}), and in the third one $F(x,\cdot)$ is an asymmetric norm (see \cite{MatveevTroyanov}). 
The smooth version is by far the most common and will be used in this work (see Definition \ref{definição de estrutura de Finlser}). In \cite{CordovaFukuokaNeves, Fukuoka-large-family}, the term $C^0$-Finsler structure was used for the second definition, whereas, in \cite{Fukuoka-Setti, Fukuoka-Setti2, PrudencioFukuoka, Fukuoka-Rodrigues}, the same term was used for the third definition. This latter definition is a natural generalization of (smooth) Finsler structure, and it will be used in this work (see Definition \ref{define estutura de Finsler C0}).


Consider a locally finite  smooth structure $\{U_\lambda \subset M, {\x_\lambda }: U_\lambda \rightarrow \mathbb R^n \}_{\lambda \in \Lambda}$ on a smooth manifold $M$ and a smooth partition of the unity $\{\varphi_\lambda\}_{\lambda \in \Lambda}$ subordinated to $\{U_\lambda\}_{\lambda \in \Lambda}$. For each $\lambda$, we have the canonical coordinate system $(\x_\lambda, \y_\lambda) = (x_\lambda^1, \ldots, x_\lambda^n, y_\lambda^1, \ldots, y_\lambda^n)$ on $TU_\lambda$.

In \cite{Fukuoka-Setti}, the authors introduce a mollifier smoothing $F_\varepsilon:TM \to [0,\infty)$ of general $C^0$-Finsler structure $F:TM \to [0,\infty)$, which is a family of Finsler structures parametrized by $\varepsilon>0$. This smoothing is done in two steps: 
Firstly, for a fixed $\lambda$, a vertical smoothing is applied on each {$F(x, \cdot):T_xU_\lambda \to [0, \infty)$} in the canonical coordinate system $(y_\lambda^1,\ldots, y_\lambda^n)$, for every $x \in U_\lambda$, what gives a family of $C^0$-Finsler structures $G_{\varepsilon,\lambda}:TU_\lambda \rightarrow [0,\infty)$ parametrized by $\varepsilon > 0$ such that their restrictions to $T_xU_\lambda$ are Minkowski norms (see Propositions 3 and 4 of \cite{Fukuoka-Setti}). 
Afterwards, a horizontal smoothing is applied on the family $G_{\varepsilon,\lambda},$ using the smooth partition of the unity, to get the mollifier smoothing $F_\varepsilon$ of $F$ (see Theorem 4 of \cite{Fukuoka-Setti}).

The construction of the vertical smoothing $G_{\varepsilon,\lambda}$ is done considering the convolution of $\zeta_\varepsilon$ and $F(x, \cdot),$ that is, $(\zeta_\varepsilon \ast_v F)(x,y)$, for every $x \in {U_\lambda},$ where $\zeta_\varepsilon$ is a modified mollifier and $\ast_v$ represents the mollifier smoothing of the vertical part.
Then ${G_{\varepsilon, \lambda}}(x, \cdot)$ is defined as the asymmetric norm whose sphere of a specified radius $r$ coincides with the level hypersurface $\{y \in T_xU_\lambda : (\zeta_\varepsilon \ast_v F)(x,y) = r\}$, for each $x \in U_\lambda$.
This construction doesn't give the desired results if $\zeta_\varepsilon$ is replaced by the standard mollifier $\eta_\varepsilon$ because $\{y \in T_xU_ \lambda : (\eta_\varepsilon \ast_v F)(x,y) = r\}$, for every $x \in U_\lambda,$ isn't necessarily the sphere of a strongly convex function, and therefore, it isn't the sphere of a Minkowski norm.

Also in \cite{Fukuoka-Setti} (see Sections 7 e 8), the authors prove that objects such as the fundamental tensor, the Chern connection and the flag curvature of $(M,F_\varepsilon)$ converge to the corresponding objects of $(M,F),$ when $F$ is a Finsler structure. 
In a coordinate system, the fundamental tensor $(g_\varepsilon)_{ij(x,y)}$ of the mollifier smoothing $F_\varepsilon$ and its partial derivatives converge to the corresponding tensor $g_{ij(x,y)}$ of $F$ and its partial derivatives respectively, which resembles the same type of convergence of the mollifier smoothing in H\"older and Sobolev spaces of functional analysis.
If $(M,F)$ is a $C^0$-Finsler manifold, it makes sense to study possible weak derivatives of $g_{ij}$, which could result in a ``weak'' version of the aforementioned geometric objects.
In fact, an example is given in Section 9 of \cite{Fukuoka-Setti}, where the authors define a two-dimensional piecewise smooth Riemannian metric, with nonzero total curvature in a set of measure zero in $M$.

\begin{remark}
	The notation used for the functions $G_{\varepsilon, \lambda}$ in \cite{Fukuoka-Setti} will be replaced by $\tilde{F}_{\varepsilon, \lambda}$, since $G$ will be used for Lie groups here.
\end{remark}

If we suppose that $F:TM \to [0,\infty)$ is a strongly convex $C^0$-Finsler structure (see Definition \ref{Definição de estrutura de Finsler de classe C^0 fortemente convexa}), the mollifier smoothing defined in \cite{Fukuoka-Setti} can be simplified using the standard mollifier $\eta_\varepsilon$ instead of $\zeta_\varepsilon$.
In other words, for strongly convex $C^0$-Finsler structures, we don't need to make any modification on the standard mollifier to have Minkowski norms on the fibers after the vertical smoothing.
Moreover, if we replace $F$ by $F^2$ during smoothing, some convergences become more elegant, since, for example, the fundamental tensor is defined using $F^2.$
Because of these observations, we consider the mollifier smoothing of $F^2(x, \cdot):{T_xU_\lambda}\to[0, \infty)$, that is, $(\eta_\varepsilon \ast_v F^2)(x,y),$ for each $x \in U_\lambda.$
The vertical smoothing $\tilde{F}_{\varepsilon,\lambda}:TU_\lambda \to [0,\infty)$ is defined in such a way that $\tilde{F}_{\varepsilon,\lambda}(x, \cdot)$ is an asymmetric norm whose sphere of a specified radius $r$ coincides with the level hypersurface $\{y \in T_xU_\lambda: (\eta_\varepsilon \ast_v F^2)(x,y) = r^2\},$ for each $x \in U_\lambda$ (see Subsection \ref{section vertical smoothing}).
The construction of the mollifier smoothing $F_\varepsilon$ of $F$ will be done using a smooth partition of unity (cf. preprint \cite{Fukuoka-Setti2}).
This replacement of $\zeta_\varepsilon$ by $\eta_\varepsilon$ will probably make calculations easier for applications. 

We now describe the organization of this paper.

In Section \ref{section Preliminaries}, we present the prerequisites necessary for this work.
Section \ref{section Mollifier smoothing of strongly convex $C^0$-Finsler structures on smooth manifolds} is divided into four subsections. In Subsections \ref{section vertical smoothing} and \ref{secao mollifier smoothing F}, we construct a mollifier smoothing of strongly convex $C^0$-Finsler structures on smooth manifolds, as mentioned above. In Subsection \ref{secao convergencia conexoes curvatura}, we prove that the fundamental tensor, Chern connection, Cartan connection, Hashiguchi connection, Berwald connection and flag curvature of the mollifier smoothing $F_\varepsilon$ converge to the corresponding objects of $F$, when $F$ is a Finsler structure. 
In the last subsection, we prove that not every strongly convex $C^0$-Finsler structure has a ``local" modulus of convexity, and, therefore, we can not expect a good behavior of the modulus of convexity under smoothings.
In Section \ref{section Mollifier smoothing of left-invariant strongly convex $C^0$-Finsler structures on Lie Groups}, we consider a Lie group $G$ with identity element $e.$ 
In Subsection \ref{subsection Mollifier smoothing of left-invariant strongly convex $C^0$-Finsler structures}, we work with left-invariant strongly convex $C^0$-Finsler structures $F$ of $G$ (see Equation \eqref{left-invariant}).
The vertical smoothing constructed in Section \ref{section vertical smoothing} is used here, and we vertically smooth $F_e:=F(e,\cdot):T_eG \to [0, \infty)$ obtaining a family of Minkowski norms $(\tilde{F}_\varepsilon)_{e}$ parameterized by $\varepsilon>0.$ 
We define the left-invariant mollifier smoothing $F_\varepsilon$ of $F$ by
\begin{align} \label{F epsilon definida em TG*}
	F_\varepsilon(x, y) = (\tilde{F}_\varepsilon)_e ((dL_{x^{-1}})_x(y))
\end{align}
for every $(x,y) \in TG,$ where $L_x:G \to G$ denotes left translation by $x \in G.$
In Section \ref{subsection Convergence of connections and curvatures on Lie groups}, we prove that the fundamental tensor, Chern connection, Cartan connection, Hashiguchi connection, Berwald connection and flag curvature of the mollifier smoothing $F_\varepsilon$ converge to the corresponding objects of $F$, when $F$ is a left-invariant Finsler structure. These convergences depend strongly of $F$ being left-invariant.

\begin{remark}
	The smoothing in \eqref{F epsilon definida em TG*} is essentially the same as that defined in Section 6 of the preprint \cite{Fukuoka-Setti2}.
	The authors present results on the modulus of convexity, for example, in Theorem 5.9: they prove that $F_e$ and $(\tilde{F}_\varepsilon)_e$ have the same modulus of convexity for small $\varepsilon$.
\end{remark}

\begin{remark}
	The notation $F_\varepsilon$, introduced in Sections 3 and 4, is used to denote different mollifier smoothings. This convention should not cause confusion.
\end{remark}

\begin{remark}
	In this work, the Einstein summation convention is in place.
\end{remark}


\section{Preliminaries}
\label{section Preliminaries}

In this section, we present the necessary prerequisites for this work. In Subsection \ref{subsection Convex functions and asymmetric norms}, we recall results related to convex functions and asymmetric norms. For instance, in Theorem \ref{Fortemente convexa - Equivalencia}, we give equivalent characterizations of strongly convex functions.
In Subsection \ref{subsection_mollifier smoothing of convex functions}, we present the theory of mollifier smoothing. Lemma \ref{Suavização de função fortemente convexa} states that mollifier smoothing of strongly convex function is strongly convex with the same modulus.
Finally, in Subsection \ref{subsection Finsler geometry}, we present objects of the Finsler geometry, such as the fundamental tensor, the Chern connection and the flag curvature.


\subsection{Convex functions and asymmetric norms}
\label{subsection Convex functions and asymmetric norms}

Throughout this section, $\mathbb{V}$ is an $n$-dimensional real vector space endowed with an inner product $\langle \cdot, \cdot \rangle.$ The norm corresponding to the inner product  $\langle \cdot, \cdot \rangle$ is denoted by $\Vert \cdot \Vert.$
For more details, see \cite{Cobzas, Rockafellar, SunYuan} for convex functions and asymmetric norms, 
\cite{Fukuoka-Setti2, Fukuoka-Rodrigues} for strongly convex functions and strongly convex asymmetric norms,  
and \cite{BaoChernShen} for Minkowski norms.


\begin{definition} \label{Fortemente convexa - definição clássica} Let $U$ be a non-empty convex subset of $\mathbb{V}$ and $f: U \rightarrow \mathbb{R}$ be a function.
	\begin{enumerate}
		\item We say that $f$ is {\rm convex} if
		\begin{align*}
			f(ty+(1-t)z) \leq tf(y)+(1-t)f(z),
		\end{align*}
		for all $t \in [0,1]$ and any $y, z \in U.$
		\item We say that $f$ is {\rm strictly convex} if
		\begin{align*}
			f(ty+(1-t)z) < tf(y)+(1-t)f(z),
		\end{align*}
		for all $t \in (0,1)$ and any $y, z \in U$ with $y \neq z.$
		\item We say that $f$ is {\rm strongly} (or {\rm uniformly}) {\rm convex with modulus} $\gamma,$ if there is a constant $\gamma>0$ such that for any $y, z \in U$ and all $t \in [0,1],$ we have
		\begin{align*}
			f(ty+(1-t)z) \leq tf(y)+(1-t)f(z)-\frac{1}{2}\gamma t(1-t) \Vert y-z \Vert^2.
		\end{align*}
	\end{enumerate}
\end{definition}


\begin{theorem} \label{Equivalencias - Fortemente convexa (Definicao Classica)} Let $U \subset \mathbb{V}$ be a non-empty open convex set and $f: U \rightarrow \mathbb{R}$ be a continuously differentiable function on $U$. Let $(y^1, \ldots, y^n)$ be a coordinate system of $\mathbb{V}$ relative to a basis of
	$\mathbb{V}.$ Then the following properties are equivalent:
	\begin{enumerate}
		\item The function $f$ is strongly convex in $U$ with modulus $\gamma > 0.$
		\item For any $y, z \in U$ 
		\begin{align*} f(z) \geq f(y) + d f_y(z-y) + \frac{1}{2}\gamma \Vert z-y\Vert^2.\end{align*}
		\item For any $y, z \in U$
		\begin{align*} (df_z - df_y) (z-y) \geq \gamma \Vert z-y\Vert^2. \end{align*}		
		\item \label{item_convexidade forte caso suave} When $f$ is twice continuously differentiable in $U$, the above properties are also equivalent to:
		
		The Hessian $\Hess f$ of $f$ is strongly (or uniformly) positive definite at every point of $U$. More precisely, 
		\begin{align*} (\Hess f)_y (v, v) := f_{y^iy^j}(y)v^i v^j \geq  \gamma \Vert v \Vert^2,  \forall y \in U, \forall v \in \mathbb{V}.\end{align*}
	\end{enumerate}
\end{theorem}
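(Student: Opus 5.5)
The plan is to reduce everything to the auxiliary function $g(y) = f(y) - \frac{\gamma}{2}\Vert y\Vert^2$. Expanding $\Vert ty+(1-t)z\Vert^2$ gives the identity
\begin{align*}
t\Vert y\Vert^2 + (1-t)\Vert z\Vert^2 - \Vert ty+(1-t)z\Vert^2 = t(1-t)\Vert y-z\Vert^2 .
\end{align*}
From it, $f$ is strongly convex with modulus $\gamma$ if and only if $g$ is convex. Moreover, $dg_y = df_y - \gamma\langle y,\cdot\rangle$. Substituting this, condition (2) for $f$ becomes the first-order condition $g(z)\ge g(y)+dg_y(z-y)$, since $\frac{\gamma}{2}(\Vert z\Vert^2 - \Vert y\Vert^2) - \gamma\langle y, z-y\rangle = \frac{\gamma}{2}\Vert z-y\Vert^2$. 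Likewise (3) becomes monotonicity $(dg_z-dg_y)(z-y)\ge 0$. Finally, $(\Hess g)_y(v,v) = (\Hess f)_y(v,v) - \gamma\Vert v\Vert^2$, so (4) becomes positive semidefiniteness of $\Hess g$; this is because $\Hess$ of $\Vert\cdot\Vert^2/2$ in the coordinates $(y^i)$ is the Gram matrix $g_{ij}=\langle e_i,e_j\rangle$, and $\Vert v\Vert^2 = g_{ij}v^iv^j$. So I only need the classical equivalences for a convex $C^1$ (resp.\ $C^2$) function $g$, which I would prove directly in the following order.

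(1)$\Rightarrow$(2): from convexity, $\frac{g(y+t(z-y))-g(y)}{t}\le g(z)-g(y)$ for $t\in(0,1]$, and letting $t\to0^+$ gives $dg_y(z-y)\le g(z)-g(y)$. (2)$\Rightarrow$(3): write (2) for the pairs $(y,z)$ and $(z,y)$ and add. (3)$\Rightarrow$(1): set $w_t=ty+(1-t)z$ and $\phi(s)=g(z+s(y-z))$, so $\phi$ is $C^1$ on $[0,1]$ and $\phi'(s)=dg_{z+s(y-z)}(y-z)$. Applying (3) to the points $z+s_2(y-z)$ and $z+s_1(y-z)$ shows that $\phi'$ is nondecreasing. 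A $C^1$ function of one variable with nondecreasing derivative is convex, by the mean value theorem on $[0,t]$ and $[t,1]$. Hence $g(w_t)=\phi(t)\le t\phi(1)+(1-t)\phi(0)$. Openness and convexity of $U$ guarantee that all segments lie in $U$.

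(4)$\Leftrightarrow$(3) in the $C^2$ case: $(dg_z-dg_y)(z-y)=\int_0^1 (\Hess g)_{y+s(z-y)}(z-y,z-y)\,ds$, so (4) gives (3). Conversely, take $z=y+hv$ with $h>0$ small, which is possible since $U$ is open. Then (3) gives $(dg_{y+hv}-dg_y)(v)\ge 0$; dividing by $h$ and letting $h\to0$ yields $(\Hess g)_y(v,v)\ge0$.

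The only delicate point is bookkeeping rather than analysis. I must check that the norm used is the inner-product norm, so that the parallelogram-type identity above holds exactly, and that the Hessian in the arbitrary (non-orthonormal) coordinates $(y^i)$ of $\frac12\Vert y\Vert^2$ is exactly the form $v\mapsto\Vert v\Vert^2$. Both hold because $\Vert\cdot\Vert$ comes from $\langle\cdot,\cdot\rangle$. With this, the translation from $f$ to $g$ is exact, and each implication above is routine.
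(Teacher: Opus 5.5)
Your proof is correct and complete. The identity $t\Vert y\Vert^2+(1-t)\Vert z\Vert^2-\Vert ty+(1-t)z\Vert^2=t(1-t)\Vert y-z\Vert^2$, together with $dg_y=df_y-\gamma\langle y,\cdot\rangle$ and $d^2g_y(v,v)=d^2f_y(v,v)-\gamma\Vert v\Vert^2$, turns conditions (1)--(4) for $f$ exactly into the $\gamma=0$ conditions for $g=f-\frac{\gamma}{2}\Vert\cdot\Vert^2$. Your cycle (1)$\Rightarrow$(2)$\Rightarrow$(3)$\Rightarrow$(1), plus (3)$\Leftrightarrow$(4), is sound.

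The paper gives no argument of its own; it only refers to Chapter 1 of Sun--Yuan. Relative to that, your write-up is self-contained. It also makes explicit a translation that a citation to an $\mathbb{R}^n$ treatment leaves to the reader: in arbitrary, possibly non-orthonormal, linear coordinates, $f_{y^iy^j}(y)v^iv^j$ is just $d^2f_y(v,v)$, and $\Vert v\Vert^2$ is the Gram form $\langle e_i,e_j\rangle v^iv^j$.

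The one thing your reduction costs is generality, since it needs $\Vert\cdot\Vert$ to come from an inner product. The direct route carries the term $\frac{\gamma}{2}\Vert z-y\Vert^2$ through each implication and works verbatim for any norm:
\begin{itemize}
\item divide the defining inequality by $t$ and let $t\to 0^+$ to get (2);
\item add two instances of (2) to get (3);
\item integrate (3) along the segment, $f(z)-f(y)-df_y(z-y)=\int_0^1(df_{y+s(z-y)}-df_y)(z-y)\,ds\ge\int_0^1\gamma s\Vert z-y\Vert^2\,ds$, to return to (2);
\item apply (2) at $w_t=ty+(1-t)z$ toward both endpoints, using $\Vert y-w_t\Vert=(1-t)\Vert y-z\Vert$ and $\Vert z-w_t\Vert=t\Vert y-z\Vert$, to recover (1).
\end{itemize}
Here the norm is the inner-product norm, so that extra generality is not needed. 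Your reduction is a perfectly good, and arguably cleaner, choice.
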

\begin{proof}
	See Chapter 1 of \cite{SunYuan}.
\end{proof}

\begin{definition}
	Consider $f: U \rightarrow \mathbb{R}$, where $U$ is a non-empty subset of $\mathbb{V}$.
	The {\rm epigraph} $\epi f$ of $f$ is a subset of $\mathbb{V} \times \mathbb{R}$ defined by
	\begin{align*}
		\epi f=\{(x,r)\in \mathbb{V} \times \mathbb{R}: \,\, f(x) \leq r,\,\, x\in U,\,\, r\in \mathbb{R}\}.
	\end{align*}
\end{definition}


\begin{remark}
	If $f:X \rightarrow Y$ is a function and $y\in  Y$, we denote the inverse image $f^{-1}(y)$ by $\left\{ f=y \right\}$.
\end{remark}


\begin{theorem}
	\label{teo_convexo e epigrafo}
	Let $U\subset \mathbb{V}$ be a non-empty convex set and $f:U \rightarrow \mathbb{R}$.
	Then $f$ is a convex function iff $\epi f$ is a convex subset of $\mathbb{V} \times \mathbb{R}$. 
	In particular, if $r\in \mathbb{R}$ and $f$ is a convex function, then 
	\begin{align}
		\nonumber
		\epi f \cap (\mathbb{V} \times \{r\}) & =\{(x,\tilde{r}) \in \mathbb{V} \times \mathbb{R}:f(x)\leq \tilde{r}, x\in U, \,\, \tilde{r} \in \mathbb{R} \} \cap (\mathbb{V} \times \{r\}) \\
		& = \{x \in \mathbb{V}:f(x) \leq r, x \in U\} \times \{r\} \nonumber \\
		& \cong \{x \in \mathbb{V}:f(x) \leq r, x \in U\}\nonumber 
	\end{align}
	are convex sets.
\end{theorem}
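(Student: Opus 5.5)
The plan is to prove the two implications directly from the definitions, and then read off the statement about sublevel sets as a special case.

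\emph{Convexity of $f$ implies convexity of $\epi f$.} Take $(x_1,r_1),(x_2,r_2)\in \epi f$ and $t\in[0,1]$. Because $U$ is convex, the point $tx_1+(1-t)x_2$ lies in $U$. Convexity of $f$ then gives
\[
f(tx_1+(1-t)x_2)\le tf(x_1)+(1-t)f(x_2)\le tr_1+(1-t)r_2 .
\]
The second inequality uses $f(x_i)\le r_i$ together with $t,1-t\ge 0$. Hence $t(x_1,r_1)+(1-t)(x_2,r_2)\in\epi f$.

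\emph{Convexity of $\epi f$ implies convexity of $f$.} Take $y,z\in U$ and $t\in[0,1]$. The points $(y,f(y))$ and $(z,f(z))$ lie in $\epi f$, so their convex combination $(ty+(1-t)z,\ tf(y)+(1-t)f(z))$ lies in $\epi f$. By the definition of the epigraph, this says exactly that $f(ty+(1-t)z)\le tf(y)+(1-t)f(z)$.

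\emph{The ``in particular'' statement.} The set $\mathbb{V}\times\{r\}$ is an affine subspace of $\mathbb{V}\times\mathbb{R}$, so it is convex. An intersection of convex sets is convex, so $\epi f\cap(\mathbb{V}\times\{r\})$ is convex. The displayed set equalities just unwind the definition: $(x,\tilde r)$ belongs to the intersection exactly when $\tilde r=r$, $x\in U$ and $f(x)\le r$. Finally, the map $x\mapsto (x,r)$ is an affine bijection from $\mathbb{V}$ onto $\mathbb{V}\times\{r\}$. Affine bijections preserve convex combinations, so the sublevel set $\{x\in U: f(x)\le r\}$ is convex. This set may be empty, and the empty set counts as convex.

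None of the steps is a genuine obstacle. The only points that need care are that the domain $U$ must be convex, so that $f$ is defined at the convex combination, and that nonnegative weights are what allow the inequalities $f(x_i)\le r_i$ to be combined.
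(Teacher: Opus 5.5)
Your proof is correct and complete; the paper gives no argument of its own, only a reference to Section 4 of Rockafellar, and your direct verification from the definitions is the standard argument for this fact. Your treatment of the sublevel set as the slice of $\epi f$ by $\mathbb{V}\times\{r\}$ matches the identifications displayed in the statement.
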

\begin{proof}
	See Section 4 of \cite{Rockafellar}.
\end{proof}


\begin{definition} \label{Definicao de norma assimetrica} An {\rm asymmetric norm} on $\mathbb{V}$ is a function $F:\mathbb{V} \rightarrow [0,\infty)$ that satisfies the following conditions:
	\begin{enumerate}
		\item $F(y) = 0$ only if $y = 0;$
		\item $F(\mu y) = \mu F(y)$ for every $\mu \in [0, \infty)$ and $y \in \mathbb{V}$ (positive homogeneity);
		\item $F(y + z) \leq F(y) + F(z)$ for every $y, z \in \mathbb{V}$ (triangle inequality).
	\end{enumerate}
\end{definition}


\begin{definition}
	Let $F:\mathbb V \rightarrow [0,\infty)$ be an asymmetric norm.
	\begin{enumerate}
		\item The {\rm open ball} centered at $y$ and radius $r$ is the subset
		\begin{align*}
			B_F(y,r) = \{z \in \mathbb V : F(z-y) < r\}.
		\end{align*}
		\item The {\rm closed ball} centered at $y$ and radius $r$ is the subset
		\begin{align*}
			B_F[y,r] = \{z \in \mathbb V : F(z-y) \leq r\}.
		\end{align*}
		\item The {\rm sphere} centered at $y$ and radius $r$ is the subset
		\begin{align*}
			S_F[y,r] = \{z \in \mathbb V : F(z-y) = r\}.
		\end{align*}
	\end{enumerate}
\end{definition}


\begin{definition} \label{Fortemente convexa - definicao artigo R-H} Let $\check{F}$ be an asymmetric norm on $\mathbb{V}.$ We say that an asymmetric norm $F$ is {\rm strongly convex with respect to} $\check{F}$ if
	\begin{align*}
		F^2(z) \geq F^2(y) + \alpha(z-y) + \check{F}^2(z-y)
	\end{align*}
	for every $y,z \in \mathbb{V}$ and $\alpha \in \partial F^2(y),$ where $\partial F^2(y)$ is the {\rm subdifferential of $F^2$ at $y.$}
\end{definition}


\begin{remark} The Definition \ref{Fortemente convexa - definicao artigo R-H} does not depend on the asymmetric norm $\check{F}$ because all asymmetric norms on finite dimensional vector spaces are equivalent. For more details, see \cite{Buro} and Remark 2.12 in \cite{Fukuoka-Setti2}.
\end{remark}


\begin{theorem} \label{Fortemente convexa - Equivalencia} Let $F$ be an asymmetric norm on $\mathbb{V}$ and $\gamma$ be a positive constant. Then
	$F^2$ is strongly convex with modulus $\gamma$ (according to Definition \ref{Fortemente convexa - definição clássica}) if, and only if,
	$F$ is strongly convex with respect to $\sqrt{\frac{\gamma}{2}}\Vert \cdot \Vert$ (according to Definition \ref{Fortemente convexa - definicao artigo R-H}).
\end{theorem}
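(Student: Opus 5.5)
Observe first that with $\check F=\sqrt{\gamma/2}\,\Vert\cdot\Vert$ we have $\check F^2(z-y)=\frac{\gamma}{2}\Vert z-y\Vert^2$. Hence the condition of Definition \ref{Fortemente convexa - definicao artigo R-H} reads
\begin{align*}
F^2(z)\ \ge\ F^2(y)+\alpha(z-y)+\tfrac12\gamma\Vert z-y\Vert^2
\end{align*}
for every $y,z\in\mathbb V$ and every $\alpha\in\partial F^2(y)$. This is the subgradient version of item (2) of Theorem \ref{Equivalencias - Fortemente convexa (Definicao Classica)}. Since $F^2$ need not be differentiable, I cannot quote that theorem directly. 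Instead I will reprove the equivalence between the classical definition of strong convexity and the subgradient inequality for the convex, finite, continuous function $f=F^2$. Here $f$ is convex because $F$ is a nonnegative convex function and $s\mapsto s^2$ is convex and nondecreasing on $[0,\infty)$. It is continuous because it is finite and convex on a finite-dimensional space, so $\partial f(y)\neq\emptyset$ for every $y$.

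($\Rightarrow$) Assume $f$ is strongly convex with modulus $\gamma$, and fix $y,z$ and $\alpha\in\partial f(y)$. For $t\in(0,1)$, apply the definition to the points $z$ and $y$ with weight $t$ on $z$:
\begin{align*}
f(y+t(z-y))\le t f(z)+(1-t)f(y)-\tfrac12\gamma t(1-t)\Vert z-y\Vert^2 .
\end{align*}
By the subgradient inequality, $f(y+t(z-y))\ge f(y)+t\,\alpha(z-y)$. Combining the two, subtracting $f(y)$ and dividing by $t$ gives
\begin{align*}
\alpha(z-y)\le f(z)-f(y)-\tfrac12\gamma(1-t)\Vert z-y\Vert^2 .
\end{align*}
Letting $t\to0^+$ yields the required inequality.

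($\Leftarrow$) Assume the subgradient inequality holds. Fix $y,z$, $t\in[0,1]$, set $w=ty+(1-t)z$, and pick any $\alpha\in\partial f(w)$, which exists by the remark above. Apply the hypothesis at the base point $w$ twice:
\begin{align*}
f(y)&\ge f(w)+\alpha(y-w)+\tfrac12\gamma\Vert y-w\Vert^2,\\
f(z)&\ge f(w)+\alpha(z-w)+\tfrac12\gamma\Vert z-w\Vert^2 .
\end{align*}
Multiply the first by $t$ and the second by $1-t$, and add. The linear terms cancel because $t(y-w)+(1-t)(z-w)=0$. Moreover $y-w=(1-t)(y-z)$ and $z-w=t(z-y)$, so the quadratic terms sum to
\begin{align*}
\tfrac12\gamma\bigl(t(1-t)^2+(1-t)t^2\bigr)\Vert y-z\Vert^2=\tfrac12\gamma\,t(1-t)\Vert y-z\Vert^2 .
\end{align*}
This is exactly the classical strong convexity inequality for $f=F^2$ with modulus $\gamma$.

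The only delicate point is the nonemptiness of $\partial F^2(w)$, which is needed in ($\Leftarrow$). It follows from the standard fact that a finite convex function on $\mathbb V$ is continuous and has nonempty subdifferential everywhere (Rockafellar, Theorem 23.4). I will cite this rather than prove it. The rest is routine algebra.
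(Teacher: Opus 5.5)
Your proof is correct and complete. The paper gives no argument of its own for this statement; it only cites Theorem 3.2 of \cite{Fukuoka-Setti2}. So there is nothing internal to compare step by step, and what you have written is a self-contained substitute for that citation.

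Your route is the standard proof of (1)$\Leftrightarrow$(2) in Theorem \ref{Equivalencias - Fortemente convexa (Definicao Classica)}, with gradients replaced by subgradients. That replacement is exactly what is needed here, because that theorem is stated only for $C^1$ functions and $F^2$ need not be differentiable.

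The ($\Rightarrow$) direction uses only the defining inequality of $\partial F^2(y)$ and the limit $t\to 0^+$. The ($\Leftarrow$) direction needs $\partial F^2(w)\neq\emptyset$ at the convex combination $w$. You obtain this correctly: $F$ is convex by positive homogeneity and the triangle inequality, $s\mapsto s^2$ is convex and nondecreasing on $[0,\infty)$, so $F^2$ is finite and convex, and Theorem 23.4 of \cite{Rockafellar} applies.

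Your approach also gives more than is asked. It proves the equivalence for every finite convex function on $\mathbb{V}$: the asymmetric-norm structure of $F$ is used only to guarantee that $F^2$ is finite and convex. The specific choice $\check F=\sqrt{\gamma/2}\,\Vert\cdot\Vert$ enters only through $\check F^2(z-y)=\frac{\gamma}{2}\Vert z-y\Vert^2$. The cited result, by contrast, is tied to the framework developed in that preprint.
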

\begin{proof}
	See Theorem 3.2 of \cite{Fukuoka-Setti2}.
\end{proof}


\begin{definition}
	\label{norma de Minkowski}
	A function $F:\mathbb V \rightarrow [0, \infty)$ is a {\rm Minkowski norm} if 
	\begin{enumerate}
		\item $F$ is smooth in $\mathbb V\backslash\{0\}$;
		\item $F(\mu y) = \mu F(y)$ for every $\mu \in [0,\infty)$ and $y\in \mathbb V$;
		\item If $(y^1, \ldots, y^n)$ is a coordinate system of $\mathbb V$ with respect to a basis of $\mathbb V$, then the  $n\times n$ hessian matrix
		\begin{align*}
			\left( g_{ij(y)} \right) := \left( \left[\frac{1}{2} F^2 (y) \right]_{y^iy^j} \right), \hspace{5mm} i,j=1, \ldots, n,
		\end{align*}
		is positive definite for every $y \in \mathbb V\backslash \{0\}$, where the subscript $y^iy^j$ stands for the partial derivatives with respect to $y^i$ and $y^j$.
	\end{enumerate}
\end{definition}

\begin{proposition} Every Minkowski norm is an asymmetric norm.
	\begin{proof}
		See Theorem 1.2.2 of \cite{BaoChernShen}.
	\end{proof}
\end{proposition}


\subsection{Mollifier smoothing of strongly convex functions}
\label{subsection_mollifier smoothing of convex functions}

In this subsection, we recall the theory of mollifier smoothing of real valued continuous functions defined on open subsets of a Euclidean space.
Proposition \ref{funcao convergencia uniforme} states that a mollifier smoothing of $f$ converges uniformly to $f$ on compact subsets, even when the smoothing is made only with respect to some variables.
Lemma \ref{Suavização de função fortemente convexa} states that the mollifier smoothing of a strongly convex function with modulus $\gamma$ is also a strongly convex function with modulus $\gamma$. For more details, see \cite{Fukuoka-Setti}, \cite{Fukuoka-Setti2} and Appendix C of \cite{Evans}.


Let $U$ be an open subset of $\mathbb{R}^n,$ $\varepsilon>0$ and $U_{\varepsilon}=\{y\in U : \dist(y,\partial U)>\varepsilon\},$ where $\partial U$ is
the boundary of $U.$ Let $\Vert \cdot \Vert$ be the Euclidean norm on $\mathbb{R}^n.$


\begin{definition} \label{definition de mollifier}  
	
	\
	
	\begin{enumerate}
		
		\item Define $\eta \in \mathit{C}^{\infty}(\mathbb{R}^n)$ by
		\begin{align*}
			\eta(y) & :=  \begin{cases}
				C\exp\left(\frac{1}{ \|y\|^2-1}\right)  \quad & {\rm{if  \quad }}\|y\|<1  \\
				0   & {\rm{if}} \quad \|y\|\geq 1,
			\end{cases}
		\end{align*}
		where the constant $C> 0$ is chosen so that $\int_{\mathbb{R}^n}\eta \ dx=1.$
		
		\item  For  each $\varepsilon>0,$ define
		\begin{align}
			\label{epsilon-mollifier}
			\eta_{\varepsilon}(y) & :=  \frac{1}{\varepsilon^n}\eta\left(\frac{y}{\varepsilon}\right).
		\end{align}
		
		We call $\eta$ of {\rm standard mollifier.} For the sake of simplicity, we will also call $\eta_\varepsilon$ the standard mollifier.
	\end{enumerate}
\end{definition} 


Note that $\eta_{\varepsilon}$ is $\mathit{C}^{\infty},$ 
\begin{align*}
	\int_{\mathbb{R}^n}\eta_{\varepsilon} \, dy = \int_{\mathbb{R}^n}\eta \, dy=1  \quad {\rm and} \quad {\rm{supp}}(\eta_{\varepsilon})=B_{\Vert \cdot \Vert}[0,\varepsilon],
\end{align*}
where $\rm{supp}(\eta_{\varepsilon})$ is the support of  $\eta_{\varepsilon}.$


\begin{definition} If $f:U \rightarrow \mathbb{R}$ is a continuous function, its {\rm mollifier smoothing} $\eta_\varepsilon \ast f$ is the convolution of $\eta_\varepsilon$ and $f$ in $U_\varepsilon.$ That is,
	\begin{align*}
		(\eta_{\varepsilon}*f)(y)  := \int_U\eta_{\varepsilon}(y-z)f(z) \, dz = \int_{B_{\Vert \cdot \Vert}[0,\varepsilon]}\eta_\varepsilon(z)f(y-z) \, dz,
	\end{align*}
	for every $y \in U_\varepsilon.$
\end{definition}


\begin{theorem} \label{propriedades da aplicacao mollifier} (Properties of mollifiers). Let $f:U\subset \mathbb{R}^n \rightarrow \mathbb{R}$ be a  continuous function, then the following statements are true:
	\begin{enumerate}
		\item \label{item_suavizacao suave}$(\eta_{\varepsilon}*f)\in \mathit{C}^{\infty}(U_\varepsilon)$.
		\item $(\eta_{\varepsilon}*f) \rightarrow f$ as $\varepsilon \rightarrow 0$ (pointwise).
		\item $(\eta_{\varepsilon}*f) \rightarrow f$ uniformly on compact subsets of $U.$
	\end{enumerate}
\end{theorem}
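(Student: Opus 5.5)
The plan is to prove the three items in order, working throughout on $U_\varepsilon$ with the second form of the convolution,
\begin{align*}
(\eta_\varepsilon * f)(y) = \int_{B_{\Vert \cdot \Vert}[0,\varepsilon]} \eta_\varepsilon(z) f(y-z)\, dz .
\end{align*}
This is well defined because $y \in U_\varepsilon$ forces $y - z \in U$ whenever $\Vert z\Vert \leq \varepsilon$.

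\textbf{Item (1).} I would show smoothness by differentiating under the integral sign, using the first form $\int_U \eta_\varepsilon(y-z) f(z)\,dz$.
\begin{enumerate}
\item Fix $y \in U_\varepsilon$ and $h$ small enough that $y + h e_i \in U_\varepsilon$.
\item Write the difference quotient of $\eta_\varepsilon * f$ as $\int_V \frac{1}{h}\left[\eta_\varepsilon(y + h e_i - z) - \eta_\varepsilon(y-z)\right] f(z)\,dz$. Here $V$ is a compact neighborhood of $B_{\Vert\cdot\Vert}[y,\varepsilon]$ contained in $U$; it exists because $\dist(y,\partial U) > \varepsilon$.
\item The quotients $\frac{1}{h}\left[\eta_\varepsilon(y + h e_i - z) - \eta_\varepsilon(y-z)\right]$ converge uniformly in $z$ to $\partial_i \eta_\varepsilon(y-z)$, since $\eta_\varepsilon$ is smooth with compact support. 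The function $f$ is bounded on $V$. Hence we may pass to the limit under the integral.
\item Induction on the order of the multi-index then gives $\partial^\alpha(\eta_\varepsilon * f) = (\partial^\alpha \eta_\varepsilon) * f$, so $\eta_\varepsilon * f \in C^\infty(U_\varepsilon)$.
\end{enumerate}

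\textbf{Item (3), which implies item (2).} Let $K \subset U$ be compact.
\begin{enumerate}
\item Choose $\delta > 0$ with $2\delta < \dist(K, \partial U)$. Then the compact set $K_\delta = \{y : \dist(y,K) \leq \delta\}$ lies in $U$, and $K \subset U_\varepsilon$ for every $\varepsilon < \delta$.
\item Since $f$ is continuous on $K_\delta$, it is uniformly continuous there. Given $\theta > 0$, pick $\varepsilon_0 < \delta$ such that $|f(y-z) - f(y)| < \theta$ whenever $y \in K$ and $\Vert z\Vert \leq \varepsilon_0$.
\item Because $\int \eta_\varepsilon = 1$ and $\eta_\varepsilon \geq 0$, for $\varepsilon < \varepsilon_0$ and $y \in K$ we get
\begin{align*}
|(\eta_\varepsilon * f)(y) - f(y)| \leq \int_{B_{\Vert \cdot \Vert}[0,\varepsilon]} \eta_\varepsilon(z)\, |f(y-z) - f(y)|\, dz \leq \theta .
\end{align*}
\item This is uniform convergence on $K$. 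Pointwise convergence (item (2)) follows by taking $K = \{y\}$.
\end{enumerate}

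The only mildly delicate point is the bookkeeping in item (1). The domain issue ($y$ must stay in $U_\varepsilon$, and the integrand must live on a fixed compact set) is what makes dominated convergence apply. I expect no real obstacle, as this is the standard argument in Appendix C of \cite{Evans}.
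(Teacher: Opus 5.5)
Your proposal is correct and is essentially the standard argument of Appendix C of \cite{Evans}, which is what the paper relies on: it states the theorem without proof and only records the consequence $D^\alpha(\eta_\varepsilon*f)=(D^\alpha\eta_\varepsilon)*f$ in the subsequent remark. There are two small differences from that reference. First, you obtain item (2) directly from item (3), which is legitimate since $f$ is continuous here (Evans uses Lebesgue points because he treats $f\in L^1_{loc}$). Second, to conclude $C^\infty$ you should add that each $(\partial^\alpha\eta_\varepsilon)*f$ is continuous. This follows from the same uniform-continuity-times-boundedness estimate you already use for the difference quotients.
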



\begin{remark} \label{derivada mollifier} 
	A consequence of the proof of Item \eqref{item_suavizacao suave} of Theorem \ref{propriedades da aplicacao mollifier} (see  \cite{Evans}) is:
	\begin{align*}
		D^\alpha(\eta_{\varepsilon}*f)(y) =  \int D^\alpha\eta_{\varepsilon}(y-z)f(z) \, dz = (D^\alpha\eta_{\varepsilon}*f)(y),
	\end{align*}
	for every $y \in U_\varepsilon$ and for every multi-index $\alpha=(\alpha_1,\ldots,\alpha_n)$.
	Moreover, when $f$ is $\mathit{C}^\infty,$ it follows that
	\begin{align*}
		D^\alpha(\eta_{\varepsilon}*f)(y) =  \int\eta_{\varepsilon}(z)D^\alpha f(y-z) \, dz =  (\eta_{\varepsilon}*D^\alpha f)(y).
	\end{align*}
\end{remark}


\begin{definition} \label{Definição ast_1 e ast_2}
	Let $k\in \{1, 2, \ldots, n-1\}$ and $\varepsilon >0$. 
	Decompose $\mathbb R^n$ as $\mathbb R^k \times \mathbb R^{n-k}$ and denote its variables by $x=(x^1, \ldots, x^k)$ and $y = (x^{k+1}, \ldots, x^n)$. 
	Let $U$ be an open subset of $\mathbb R^k$ and $V$ be an open subset of $\mathbb R^{n-k}$. 
	Consider a continuous function $f: U \times V \rightarrow \mathbb R$ and let $\eta_\varepsilon : B_{\Vert \cdot \Vert}(0,\varepsilon) \subset \mathbb R^k \rightarrow \mathbb R$ as in (\ref{epsilon-mollifier}). 
	The mollifier smoothing $(\eta_\varepsilon \ast_1 f): U_\varepsilon \times V \rightarrow \mathbb R$ of $f$ with respect to the first $k$ variables is defined as
	\begin{align*}
		(\eta_\varepsilon \ast_1 f) (x,y) = \int \eta_\varepsilon (z) f(x - z, y) \, dz.
	\end{align*}
	The mollifier smoothing $(\eta_\varepsilon \ast_2 f)$ with respect to the last $n-k$ variables is defined analogously.
\end{definition}


\begin{proposition}
	\label{funcao convergencia uniforme}
	Let $U$ be an open subset of $\mathbb R^k$, $V$ be an open subset of $\mathbb R^{n-k}$ and $f: U \times V \rightarrow \mathbb R$ be a continuous function. Then $(\eta_\varepsilon \ast_1 f)$ converges uniformly to $f$ on compact subsets of $U \times V$. 
	The same result holds for $(\eta_\varepsilon \ast_2 f)$.
\end{proposition}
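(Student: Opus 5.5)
The argument is the classical one for ordinary mollifiers (Appendix C of \cite{Evans}), with the supremum taken uniformly over the compact set. We treat $(\eta_\varepsilon \ast_1 f)$; the case $(\eta_\varepsilon \ast_2 f)$ is identical with the roles of the factors exchanged.

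Let $K \subset U \times V$ be compact and let $\pi_1(K)\subset U$ be its projection onto the first factor. First we choose $\delta>0$ such that the compact set
\begin{align*}
K_\delta := \{(x-z,y) : (x,y)\in K,\ \Vert z\Vert \leq \delta\}
\end{align*}
is contained in $U\times V$. Such a $\delta$ exists because $K_\delta$ is the image of the compact set $K \times B_{\Vert\cdot\Vert}[0,\delta]$ under a continuous map. Moreover, $\dist(\pi_1(K),\partial U)>0$, so for small $\delta$ we have $x-z\in U$ whenever $x \in \pi_1(K)$ and $\Vert z \Vert \leq \delta$.

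For $\varepsilon<\delta$ we have $\pi_1(K)\subset U_\varepsilon$, so $(\eta_\varepsilon \ast_1 f)$ is defined on $K$. Since $\int\eta_\varepsilon=1$ and $\eta_\varepsilon\ge 0$, for $(x,y)\in K$,
\begin{align*}
|(\eta_\varepsilon \ast_1 f)(x,y)-f(x,y)| \leq \int_{B_{\Vert\cdot\Vert}[0,\varepsilon]} \eta_\varepsilon(z)\,|f(x-z,y)-f(x,y)|\,dz \leq \sup_{\Vert z\Vert\le\varepsilon}|f(x-z,y)-f(x,y)|.
\end{align*}

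The only step requiring care is to make this bound uniform in $(x,y)\in K$. For that we use the uniform continuity of $f$ on the compact set $K_\delta$. Given $\theta>0$, there is $\rho\in(0,\delta)$ such that $|f(p)-f(q)|<\theta$ whenever $p,q\in K_\delta$ and $\Vert p-q\Vert<\rho$. Both $(x-z,y)$ and $(x,y)$ lie in $K_\delta$ and are at distance $\Vert z\Vert \leq \varepsilon$ from each other. Hence for every $\varepsilon<\rho$ the right-hand side above is at most $\theta$, simultaneously for all $(x,y)\in K$. This proves uniform convergence on $K$.
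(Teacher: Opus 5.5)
Your proof is correct and follows essentially the same route as the paper, which simply defers to Proposition 1 of \cite{Fukuoka-Setti}: the classical mollifier estimate, made uniform through uniform continuity of $f$ on a compact neighborhood of $K$. One cosmetic slip is that the existence of $\delta$ with $K_\delta\subset U\times V$ (and $\pi_1(K)\subset U_\varepsilon$ for $\varepsilon<\delta$) comes from taking $\delta<\dist(\pi_1(K),\partial U)$, while your continuous-image remark only gives compactness of $K_\delta$.
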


\begin{proof} See Propostion 1 of \cite{Fukuoka-Setti}.
\end{proof}


\begin{lemma} \label{Suavização de função fortemente convexa} If $f:\mathbb{R}^n \rightarrow \mathbb{R}$ is a strongly convex function with modulus $\gamma$, then $\eta_\varepsilon \ast f$ is also strongly convex with modulus $\gamma.$
\end{lemma}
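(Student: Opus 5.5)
The proof works directly from Definition \ref{Fortemente convexa - definição clássica}, item (3), and uses two facts: convolution is linear, and the mollifier is a nonnegative weight of total mass one. No differentiability of $f$ is needed. Fix $y, z \in \mathbb{R}^n$ and $t \in [0,1]$. Since $f$ is defined on all of $\mathbb{R}^n$, we have $U_\varepsilon = \mathbb{R}^n$, so $\eta_\varepsilon \ast f$ is defined everywhere. The strong convexity inequality is applied pointwise to the translated pair $y-w$, $z-w$ for each $w \in B_{\Vert \cdot \Vert}[0,\varepsilon]$.

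First, write
\begin{align*}
(\eta_\varepsilon \ast f)(ty+(1-t)z) = \int \eta_\varepsilon(w) f\big(t(y-w)+(1-t)(z-w)\big)\, dw,
\end{align*}
using $ty+(1-t)z-w = t(y-w)+(1-t)(z-w)$. Second, apply strong convexity of $f$ with modulus $\gamma$ to the points $y-w$ and $z-w$. Their difference is $(y-w)-(z-w) = y-z$, which does not depend on $w$. This gives, for every $w$,
\begin{align*}
f\big(t(y-w)+(1-t)(z-w)\big) \leq t f(y-w) + (1-t) f(z-w) - \tfrac{1}{2}\gamma t(1-t)\Vert y-z\Vert^2 .
\end{align*}
Third, multiply by $\eta_\varepsilon(w) \geq 0$ and integrate over $w$. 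Since $\int \eta_\varepsilon\, dw = 1$, the constant term passes through unchanged, and we obtain
\begin{align*}
(\eta_\varepsilon \ast f)(ty+(1-t)z) \leq t(\eta_\varepsilon \ast f)(y) + (1-t)(\eta_\varepsilon \ast f)(z) - \tfrac{1}{2}\gamma t(1-t)\Vert y-z\Vert^2,
\end{align*}
which is exactly the claim.

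There is essentially no obstacle. The only points to check are that the integrals are finite and that integration preserves the inequality. A convex function on $\mathbb{R}^n$ is continuous, so $f$ is bounded on the compact ball $y - B_{\Vert \cdot \Vert}[0,\varepsilon]$; the integrals are therefore finite Riemann/Lebesgue integrals. Integration against the nonnegative weight $\eta_\varepsilon$ preserves the inequality because the integral is monotone.

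An alternative route avoids the translation trick. Since $f$ is strongly convex with modulus $\gamma$, the function $f - \frac{\gamma}{2}\Vert \cdot \Vert^2$ is convex, and convolution preserves convexity. One then computes that $\eta_\varepsilon \ast \Vert \cdot \Vert^2 = \Vert \cdot \Vert^2 + c_\varepsilon$, because $\eta_\varepsilon$ is radial and so has zero first moment. This route works but involves more computation than the direct argument above.
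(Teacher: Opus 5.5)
Your proof is correct and complete. The identity $ty+(1-t)z-w = t(y-w)+(1-t)(z-w)$, the strong convexity inequality applied to the translated pair (whose difference is still $y-z$), and integration against the nonnegative, unit-mass weight $\eta_\varepsilon$ give exactly the modulus-$\gamma$ inequality. Continuity of the finite convex function $f$ justifies the integrals.

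The paper gives no argument of its own; it simply defers to Lemma 2.39 of \cite{Fukuoka-Setti2}. Your direct argument is the standard self-contained proof of this fact. It uses neither smoothness of $f$ nor radial symmetry of $\eta_\varepsilon$, only translation invariance of $\Vert\cdot\Vert$, so it applies verbatim to the coordinate norms used later in Lemma \ref{lema propriedade de F_epsilon}.
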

\begin{proof}
	See Lemma 2.39 of \cite{Fukuoka-Setti2}.
\end{proof}


\subsection{Finsler geometry}
\label{subsection Finsler geometry}

In this subsection, we present some objects of Finsler geometry. In particular, we introduce the fundamental tensor and the geometrical objects necessary to define the Chern, Cartan, Hashiguchi and Berwald connections, as well as the flag curvature. For more details, see \cite{BaoChernShen}.

Before Finsler geometry, we recall how the operation of raising and lowering indices in tensors will be denoted. For more details, see \cite{Dubrovin-Fomenko-Novikov}.
Let $\mathbb V$ be a real vector space and $\mathcal B=\{e_1, \ldots,$ $e_n\}$ a basis of $\mathbb V$. 
Denote by $\mathbb V^\ast$ the dual vector space of $\mathbb V$ and by $\mathcal B^\ast = \{e^1, \ldots, e^n\}$ the dual basis of $\mathcal B$.
A $p$ times contravariant and $q$ times covariant tensor $T$ on $\mathbb V$ is represented by
\begin{align*}
	T^{i_1 \ldots i_p}_{j_1 \ldots j_q}e_{i_1}\otimes \ldots \otimes e_{i_p} \otimes e^{j_1} \otimes \ldots \otimes e^{j_q}
\end{align*}
or simply by $T^{i_1 \ldots i_p}_{j_1 \ldots j_q}$.
When $\mathbb V$ is endowed with a inner product $g=(g_{ij})$, then $(g^{ij})$ represent the inverse tensor of $g$ and the operation of raising and lowering indices are denoted by $\rho^j=g^{ji}\rho_i$ and $\xi_j = g_{ij}\xi^i$ respectively.


\begin{definition} \label{definição de estrutura de Finlser} A {\rm Finsler structure of} $M$ is a function $F:TM \to [0, \infty)$ with the following properties:
	\begin{enumerate}
		\item $F$ is $C^\infty$ on the entire slit tangent bundle $TM\setminus 0 := \{(x, y) \in TM : x \in M, y \, \neq 0\};$
		\item For every $x \in M,$ $F(x, \cdot):T_xM \to [0, \infty)$ is a Minkowski norm.
	\end{enumerate}
	The pair $(M, F)$ is called a {\rm Finsler manifold}.
\end{definition}

Let $(M,F)$ be a Finsler manifold and consider the natural projection $\pi: TM\setminus 0 \rightarrow M$. The pulled-back tangent bundle  
\begin{align*}
	\pi^* TM = \bigcup_{x\in M} \left( \{x\} \times (T_xM\setminus\{0\})\times T_xM  \right)
\end{align*}
and the pulled-back cotangent bundle  
\begin{align*}
	\pi^* T^*M = \bigcup_{x\in M} \left( \{x\} \times (T_xM\setminus\{0\})\times T_x^*M  \right)
\end{align*}
are vector bundles over the slit tangent bundle $TM\setminus 0$. We consider the identifications $(\pi^\ast TM )_{(x,y)} \cong T_xM$ and $(\pi^\ast T^\ast M )_{(x,y)} \cong T^\ast_xM$ on the fibers of these pulled-back bundles.

Let  $\x = (x^i)$ be a coordinate system of an open subset $U \subset M$ and $(\x, \y) = ((x^i),(y^i))$ be the natural coordinate system on $TU$ induced by $(x^i)$. 
The sections $\left\{ \frac{\partial}{\partial x^i} \right\}$ and $ \{ dx^i\}$ on $\pi^*TU$ and $\pi^*T^*U$ are respectively given by
\begin{align*}
	\frac{\partial}{\partial x^i} : TU\setminus 0 \rightarrow \pi^*TU, & & \frac{\partial}{\partial x^i}(x,y) = \left( x, y, \frac{\partial}{\partial x^i}\Big\vert_x \right),\\
	d x^i : TU\setminus 0 \rightarrow \pi^*T^*U, & & d x^i(x,y) = \left( x, y, d(x^i)_x \right).
\end{align*}
These sections are defined locally in $x$ and globally in $y$, and are called {\it transplanted sections}.

\begin{definition} 
	\label{define secao distinguida} 
	The {\rm distinguished section} $\ell$ is a section of $\pi^*TM$ defined by
	\begin{align}
		\ell \ = \ \ell_{(x,y)} \ := \ \frac{y^i}{F(x,y)} \, \frac{\partial}{\partial x^i}\Big\vert_x \ = \ \frac{y^i}{F}  \,  \frac{\partial}{\partial x^i} \ =: \ \ell^i  \,  \frac{\partial}{\partial x^i}.
		\label{define distinguished} 
	\end{align} 
\end{definition}

\begin{definition} 
	The {\rm fundamental tensor} is a section of inner products on $\otimes^2 \pi^\ast T^\ast  M$ given locally by
	\begin{align} \label{define tensor fundamental} 
		g_{ij(x,y)} = \frac{1}{2} \, \left( \frac{\partial^2 F^2}{\partial y^i \partial y^j}\right)(x,y).
	\end{align}
\end{definition}

\begin{proposition} \label{g ij redimensionamento} Let $F:TM \rightarrow [0,\infty)$ be a Finsler structure. Given $(x,y) \in TM\setminus 0$, we have that
	$g_{ij(x,y)}=g_{ij(x,\mu y)}$ for every $\mu > 0$. 
\end{proposition}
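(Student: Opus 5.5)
The plan is to exploit positive homogeneity of $F$ in the fiber variable. Fix $x \in U$ and $\mu > 0$, and work in the natural coordinates $(\x,\y)$ on $TU$. Since $F(x,\cdot)$ is a Minkowski norm, $F(x,\mu y) = \mu F(x,y)$ for all $y \in T_xM$. Squaring gives
\begin{align*}
F^2(x,\mu y) = \mu^2 F^2(x,y) \quad \text{for all } y \in T_xM .
\end{align*}
The idea is to differentiate this identity twice in the fiber variable and compare both sides.

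Define $h(y) := F^2(x,\mu y)$ on $T_xM\setminus\{0\}$. This function is smooth there, because $y \mapsto \mu y$ is a linear diffeomorphism of $T_xM\setminus\{0\}$ onto itself and $F$ is smooth on $TM\setminus 0$. By the chain rule,
\begin{align*}
\frac{\partial^2 h}{\partial y^i \partial y^j}(y) = \mu^2 \frac{\partial^2 F^2}{\partial y^i \partial y^j}(x,\mu y).
\end{align*}
Differentiating the right-hand side of the squared identity instead gives
\begin{align*}
\frac{\partial^2 h}{\partial y^i \partial y^j}(y) = \mu^2 \frac{\partial^2 F^2}{\partial y^i \partial y^j}(x,y).
\end{align*}
Equating the two expressions and dividing by $2\mu^2 > 0$, the definition \eqref{define tensor fundamental} yields $g_{ij(x,\mu y)} = g_{ij(x,y)}$. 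In other words, $g_{ij}$ is positively homogeneous of degree zero in $y$.

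No step is a real obstacle. The only care needed is that $y \neq 0$, which forces $\mu y \neq 0$, so both points lie in the region where $F$ is smooth and the derivatives exist. The identity is coordinate-independent since it is an identity of tensors, but the coordinate computation suffices because it holds in every chart.
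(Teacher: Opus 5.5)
Your proposal is correct and is essentially the standard argument. The paper gives no proof of its own and instead cites Chapter 1 of Bao--Chern--Shen, where the fact is obtained the same way you obtain it: differentiate $F^2(x,\mu y)=\mu^2F^2(x,y)$ twice in $y$ (chain rule on the left), then divide by $2\mu^2$, so that $g_{ij}$ is positively homogeneous of degree zero in $y$.
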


\begin{proof}
	See Chapter 1 of \cite{BaoChernShen}.
\end{proof}

\begin{definition} 
	\label{define tensor de cartan} 
	Let $(M,F)$ be a Finsler manifold. 
	The {\rm Cartan tensor} is symmetric section $A$ on $\otimes^3 \pi^\ast T^\ast M$ defined locally by 
	\begin{align}
		\label{define componente tensor de cartan}
		A_{(x,y)} = A_{ijk(x,y)} dx^i \otimes dx^j \otimes dx^k
		:= \frac{F}{2} \, \frac{\partial g_{ij}}{\partial y^k} \, dx^i \otimes dx^j \otimes dx^k.
	\end{align}
\end{definition}

The tensor 
\begin{align} 
	\label{unormalized Cartan tensor}
	C_{ijk}=\frac{1}{2} \, \frac{\partial g_{ij}}{\partial y^k} = \frac{1}{F} \, A_{ijk}
\end{align}
is also called the {\it Cartan tensor} in the literature.

The {\it formal Christoffel symbols} of the second kind are defined as 
\begin{align} 
	\gamma^i{}_{jk} := g^{is} \, \frac{1}{2} \, \left(\frac{\partial g_{sj}}{\partial x^k}-\frac{\partial g_{jk}}{\partial x^s}+\frac{\partial g_{ks}}{\partial x^j}  \right).
	\label{definicao formal Christoffel simbols}
\end{align}

The {\it nonlinear connection} is given by
\begin{align}
	N^i{}_j := \gamma^i{}_{jk} \, y^k- C^i{}_{jk} \, \gamma^k{}_{rs} \, y^r \, y^s,
	\label{definicao non linear connection}
\end{align}
where $C^i{}_{jk} = g^{is}C_{sjk}$. 

It is usual to consider the basis $\left\{ \frac{\delta}{\delta x^i}, F\frac{\partial}{\partial y^i}\right\}$ and $\left\{ dx^i, \frac{\delta y^i}{F}\right\}$ on $TU\setminus 0$ and $T^\ast U$ respectively, where
\begin{align*}
	\label{define delta delta}
	\frac{\delta}{\delta x^j} := \frac{\partial}{\partial x^j}-N^i{}_j \, \frac{\partial}{\partial y^i}
\end{align*}
and 
\begin{align*}
	\delta y^i:= dy^i+N^i{}_j \, dx^j.
\end{align*}

The Chern connection is characterized by the following theorem.

\begin{theorem} 
	\label{teorema conexao de Chern}
	Let $(M,F)$ be a Finsler manifold. The pulled-back bundle  $\pi^*TM$ admits a unique linear connection $\nabla$, called the {\rm  Chern connection}, such that its connection forms $\omega_j^{\ i}$, defined by
	\begin{align*}
		\nabla_X \, \frac{\partial}{\partial x^j} = \omega_j{}^i(X) \, \frac{\partial}{\partial x^i},
	\end{align*}
	are locally characterized by the following structural equations:
	\begin{itemize}
		\item {\bf Torsion freeness:}
		\begin{align*}
			d(dx^i)-dx^j\wedge \omega_j{}^i = - dx^j \wedge \omega_j{}^i=0.
		\end{align*}
		\item {\bf Almost $g$-compatibility:} 
		\begin{align*}
			dg_{ij}- g_{kj} \, \omega_i{}^k-g_{ik} \, \omega_j{}^k = 2 \, C_{ijs} \, \delta y^s.
		\end{align*}
	\end{itemize}
\end{theorem}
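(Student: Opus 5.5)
Existence and uniqueness of the Chern connection reduce to a linear-algebra problem in a fixed chart, followed by a patching argument. Write the connection forms in the basis $\{dx^k, \delta y^k/F\}$ of $T^*(TU\setminus 0)$ as
\begin{align*}
\omega_j{}^i = \Gamma^i{}_{jk}\, dx^k + Z^i{}_{jk}\, \frac{\delta y^k}{F}.
\end{align*}
Torsion freeness, $dx^j\wedge\omega_j{}^i=0$, forces $Z^i{}_{jk}=0$, because the $dx^j\wedge \delta y^k$ terms must cancel and these 2-forms are linearly independent. The remaining $dx^j\wedge dx^k$ terms then force $\Gamma^i{}_{jk}=\Gamma^i{}_{kj}$. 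So $\omega_j{}^i=\Gamma^i{}_{jk}dx^k$ with symmetric lower indices.

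Next I substitute this into almost $g$-compatibility. I expand
\begin{align*}
dg_{ij} = \frac{\partial g_{ij}}{\partial x^k}dx^k + \frac{\partial g_{ij}}{\partial y^k}dy^k = \frac{\delta g_{ij}}{\delta x^k}dx^k + 2C_{ijk}\,\delta y^k,
\end{align*}
using $dy^k=\delta y^k - N^k{}_s dx^s$ and $\frac{\partial g_{ij}}{\partial y^k}=2C_{ijk}$. The $\delta y$ parts cancel exactly against the right-hand side $2C_{ijs}\delta y^s$. What remains is
\begin{align*}
\frac{\delta g_{ij}}{\delta x^k} = g_{sj}\Gamma^s{}_{ik} + g_{is}\Gamma^s{}_{jk}.
\end{align*}
Together with the symmetry of $\Gamma$, the usual Christoffel permutation trick (cyclically permute $i,j,k$, add two equations and subtract the third) gives the unique solution
\begin{align*}
\Gamma^i{}_{jk} = \frac{1}{2}\, g^{is}\left(\frac{\delta g_{sj}}{\delta x^k} - \frac{\delta g_{jk}}{\delta x^s} + \frac{\delta g_{ks}}{\delta x^j}\right).
\end{align*}
Conversely, these $\Gamma$ satisfy both structural equations, which gives local existence and uniqueness. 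Invertibility of $g_{ij}$, available because each $F(x,\cdot)$ is a Minkowski norm, is essential here.

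The main obstacle is globalization. I must check that the locally defined forms agree on overlaps $TU\cap T\tilde U$, so that they define a single connection on $\pi^*TM$. The cleanest route is to avoid explicit transformation formulas and use uniqueness. I would first verify that both structural equations are invariant under change of frame $\partial/\partial \tilde x^j = \frac{\partial x^a}{\partial \tilde x^j}\partial/\partial x^a$, where $\omega$ transforms as a connection. For torsion, this uses $d(\partial x^a/\partial\tilde x^j)\wedge d\tilde x^j=0$ by symmetry of second derivatives. For compatibility, I need that $C_{ijs}\delta y^s$ transforms tensorially, which in turn requires that $\delta y^i$ transforms like $dx^i$, i.e. that $N^i{}_j$ transforms as a nonlinear connection. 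I expect this last point to be the delicate computation. I would handle it via the characterization $N^i{}_j = \partial G^i/\partial y^j$, where $G^i=\frac12\gamma^i{}_{rs}y^ry^s$ are the spray coefficients and $G^i$ transforms as a spray; otherwise I would cite Chapter 2 of \cite{BaoChernShen}. Once invariance holds, the connection obtained in the $\tilde x$ chart and the one transported from the $x$ chart both satisfy the equations on the overlap, so uniqueness makes them coincide, and the local connections glue.
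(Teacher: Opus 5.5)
Your proposal is correct and follows essentially the same route as the standard proof in Chapter 2 of \cite{BaoChernShen}, which is all the paper gives. Torsion freeness removes the $\delta y$-components and makes $\Gamma$ symmetric; almost $g$-compatibility together with the Christoffel trick using $\delta/\delta x$ then determines $\Gamma$ uniquely; and the local solutions patch together by uniqueness, because $g$ and $C$ are tensorial and $\delta y^i$ transforms like $dx^i$, which your spray argument via $N^i{}_j=\partial G^i/\partial y^j$ correctly supplies.
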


\begin{proof}
	See Chapter 2 of \cite{BaoChernShen}.
\end{proof}

The torsion freeness is equivalent to the absence of $dy^k$ in  $\omega_j{}^i,$ that is,
\[
\omega_j{}^{i}=\Gamma^i{}_{jk} \, dx^k,
\]
together with the symmetry
\begin{align*}
	\Gamma^i{}_{jk}=\Gamma^i{}_{kj}.
\end{align*}
The almost metric-compatibility  implies that
\begin{align} \label{Gamma almost metric compatibility}
	\Gamma^l{}_{jk}= \gamma^l{}_{jk}-g^{li} \, \left( C_{ijs} \, N^s{}_k- C_{jks} \, N^s{}_i+ C_{kis} \, N^s{}_j \right),
\end{align}
or equivalently
\begin{align*}
	\Gamma^i{}_{jk}= \frac{g^{is}}{2} \, \left(\frac{\delta g_{sj}}{\delta x^k}-\frac{\delta g_{jk}}{\delta x^s}+\frac{\delta g_{ks}}{\delta x^j} \right).
\end{align*}

Denote by
\begin{align*}
	\dot A = \nabla_{\ell^i \frac{\delta}{\delta x^i}} \, A,
\end{align*}
the horizontal covariant derivative of the Cartan tensor $A$ along the distinguished (horizontal) direction $\hat{\ell} := \ell^i \, \frac{\delta}{\delta x^i}$.
Then
\begin{align}
	\label{define a dot}
	\dot A^i{}_{jk} = \frac{1}{2} \, (\sigma^i)_{y^jy^k} - \Gamma^i{}_{jk}
\end{align}
where
\begin{align}
	\label{definicao Gi}
	\sigma^i = \gamma^i{}_{jk} \, y^j \, y^k 
\end{align}
(see \cite{BaoChernShen}).

The curvature 2-forms of the Chern connection are defined by
\begin{align*}
	\Omega_j{}^i:=  d\omega_j{}^i - \omega_j{}^k\wedge \omega_k{}^i. 
\end{align*}
Writing the $2$-forms $\Omega_j^{\ i}$ in terms of the basis  $\left\{dx^k,\frac{\delta y^k}{F}\right\},$ we have that
\begin{align*}
	\Omega_j{}^i = \frac{1}{2} \, R_j{}^i{}_{kl} \, dx^k\wedge dx^l+P_j{}^i{}_{kl} \, dx^k \wedge \frac{\delta y^l}{F}+\frac{1}{2} \, Q_j{}^i{}_{kl} \, \frac{\delta y^k}{F} \wedge \frac{\delta y^l}{F}.
\end{align*}
The objects $R,P$ and $Q$ are respectively the $hh$-, $hv$-, $vv$-curvature tensors of the Chern connection and they can be written as
\begin{align*}
	R_j{}^i{}_{kl} & = \frac{\delta \Gamma^i{}_{jl}}{\delta x^k}-\frac{\delta \Gamma^i{}_{jk}}{\delta x^l}+ \Gamma^i{}_{hk} \, \Gamma^h{}_{jl}-\Gamma^i{}_{hl} \, \Gamma^h{}_{jk}, \label{Rjikl hh} \\
	P_j{}^i{}_{kl} & = -  F \, \frac{\partial \Gamma^i{}_{jk}}{\partial y^l}, \nonumber \\
	Q_j{}^i{}_{lk} & = 0. \nonumber
\end{align*}
Notice that
\[
R_j{}^i{}_{kl}  = - R_j{}^i{}_{lk}.
\]

Now let us define the flag curvature. 
A {\it flag} at $x \in M$ consists of two linearly independent vectors in $T_xM$, a vector $y \in T_xM,$ which is called the {\it flagpole}, and $z:=z^i\frac{\partial}{\partial x^i},$ which is the {\it transversal edge} to $y$.

The flag curvature is the correspondence which associates to each $x \in M$ and each flag $(y,z) $ in $T_xM$  the number
\begin{align*}
	K(y,z):=\frac{z^i \, (y^j \, R_{jikl} \, y^l) \, z^k}{g(y,y) \, g(z,z)-[g(y,z)]^2},
\end{align*}
where 
\begin{align} \label{Rjikl abaixa}
	R_{jikl} = g_{im}R_j{}^m{}_{kl}.
\end{align}
$K(y,z)$ depends only on $y$ and $\spann\{y,z\}$.

\vspace{0.25cm}

Now let us introduce other connections:

\begin{definition}
	\label{Cartan Hasiguchi Berwarld} The connection forms of the following connections on $\pi^\ast TM$ are defined by:
	\begin{enumerate}
		\item {\rm Cartan connection}: $\omega_j{}^i + C^i{}_{jk} \, \delta y^k$.
		\item {\rm Hashiguchi connection}: $\omega_j{}^i + C^i{}_{jk} \, \delta y^k + \dot A^i{}_{jk} \, dx^k$.
		\item {\rm Berwald connection}: $\omega_j{}^i + \dot A^i{}_{jk} \, dx^k$.
	\end{enumerate}
\end{definition}

\begin{remark}
	The flag curvature for the Chern connection, Cartan connection, Hashiguchi connection and Berwald connection are the same.
\end{remark}

We conclude this subsection by presenting the definition of $C^0$-Finsler structure, which is a natural generalization of Finsler structure.

\begin{definition} 
	\label{define estutura de Finsler C0}
	A {\rm $C^0$-Finsler structure of} $M$ is a continuous function $F:TM \to[0, \infty)$ such that $F(x,\cdot): T_xM \to [0, \infty)$ is an asymmetric norm for every $x \in M.$
\end{definition}


\section{Mollifier smoothing of strongly convex $C^0$-Finsler structures on smooth manifolds}
\label{section Mollifier smoothing of strongly convex $C^0$-Finsler structures on smooth manifolds}

Subsections \ref{section vertical smoothing} and \ref{secao mollifier smoothing F} are dedicated to constructing the mollifier smoothing, $F_\varepsilon:TM \to [0, \infty),$ of a strongly convex $C^0$-Finsler structure $F:TM \to [0, \infty)$.
In Subsection \ref{secao convergencia conexoes curvatura}, we study the convergence, for example of the fundamental tensor, the Chern connection and the flag curvature of $(M,F_\varepsilon)$ to the corresponding objects of $(M,F),$ when $F$ is a Finsler structure.
In Subsection \ref{Example}, we prove that if $F:TM \to [0, \infty)$ is a strongly convex $C^0$-Finsler structure, there does not always exist a $\gamma>0$ such that $F(x, \cdot)$ is strongly convex with respect to $\sqrt{\frac{\gamma}{2}}\Vert \cdot \Vert$ for every $x$ in a neighborhood $U$ of $M$. 


\subsection{Vertical smoothing of strongly convex $C^0$-Finsler structures}
\label{section vertical smoothing}

In this subsection, we will define strongly convex $C^0$-Finsler structures and the vertical smoothing of such structures.

\begin{definition} 
	Let $\x : U \to \mathbb{R}^n$ be a local coordinate system on an open subset $U$ of $M$. 
	The {\rm Riemannian metric $\left< \cdot, \cdot \right>^{\x}$ on $U$ induced by $\x$} is the $\x-$pull-back of the canonical Euclidean metric of $\x(U)$ on $U$, that is,
	\begin{align*}
		\left< y , z \right>^{\x}_x = \left< d\x_x(y), d\x_x(z)\right>,
	\end{align*}
	for every $x \in U$ and $y,z \in T_xM$.
\end{definition}


\begin{remark}
	If there isn't any possibility of confusion, we denote the Riemannian metric induced by $\x$ restricted to $T_xM$ also by $\left< \cdot, \cdot \right>$, and the corresponding norm by $\Vert \cdot \Vert$.
\end{remark}


\begin{definition} \label{Definição de estrutura de Finsler de classe C^0 fortemente convexa} 
	A $C^0$-Finsler structure $F:TM \to [0, \infty)$ is {\rm strongly convex} if $F(x, \cdot):T_xM \to [0, \infty)$ is strongly convex for each $x \in M$ (according to Definition \ref{Fortemente convexa - definicao artigo R-H}).
\end{definition}


Let $F:TM \to [0, \infty)$ be a strongly convex $C^0$-Finsler structure. The construction of vertical smoothing $\tilde{F}_ \varepsilon$ of $F$ will be done in a coordinate system.


Let
\begin{align*} 
	\x = (x^i): U_0 \to \mathbb{R}^n
\end{align*} be a coordinate system on an open subset $U_0$ of $M$. Let $U$ be an open subset of $U_0$ such that
\begin{align} \label{definindo U}
	U \Subset U_0.
\end{align}


Let
\begin{align} 
	\label{sistema de coordenadas x,y}
	(\x,\y) = ((x^i), (y^i))
\end{align}
be the natural coordinates on $TU$ with respect to $\boldsymbol{x}$.


For each $x \in U$ there exists a $\gamma_{x} > 0$ such that $F(x, \cdot)$ is strongly convex with respect to $\sqrt{\frac{\gamma_{x}}{2}}\Vert \cdot \Vert$, that is,
\begin{align*}
	F^2(x, z) \geq F^2(x, y) + \alpha(x, z-y) + \frac{\gamma_{x}}{2}\Vert z-y \Vert^2
\end{align*}
for every $y,z \in T_xM$ and $\alpha(x, \cdot) \in \partial F^2(x,y),$ where $\partial F^2(x,y)$ is the {\it subdifferential of $F^2(x, \cdot)$ at $y.$}
By the equivalence given in Theorem \ref{Fortemente convexa - Equivalencia}, we have that $F^2(x, \cdot)$ is strongly convex with modulus $\gamma_{x}$ for each $x \in U,$ that is,
\begin{align*}
	F^2(x, ty+(1-t)z) \leq tF^2(x, y)+(1-t)F^2(x, z)-\frac{1}{2}\gamma_x t(1-t) \Vert y-z \Vert^2,
\end{align*}
for every $y,z \in T_xM$ and $t \in [0, 1].$


We will work with two asymmetric norms in each tangent space $T_xU$: 
$F(x, \cdot)$ and $\Vert \cdot \Vert_x^{\x}$. For instance the sphere centered at $v$ and radius $r$ in $T_xU$ with respect to norm $F(x, \cdot)$ is denoted by $S_F[x, v, r]$ and so on.


Define
\begin{align} 
	\label{constante r_U}
	r_U = \max_{(x,y) \in \, S_{\Vert \cdot \Vert}M|_{\overline{U}}} F(x,y) = \max_{(x,y) \in \, B_{\Vert \cdot \Vert}M|_{\overline{U}}} F(x,y)
\end{align}
and
\begin{align} 
	\label{constante rho_U}
	r_u= \min_{(x,y) \in \, S_{\Vert \cdot \Vert}M|_{\overline{U}}} F(x,y),
\end{align}
where
\begin{align*} 
	S_{\Vert \cdot \Vert}M|_{\overline{U}} & = \left\{ (x,y) \in TM : x \in \overline{U}, \Vert y \Vert_x = 1 \right\} \quad {\rm and} \\ 
	B_{\Vert \cdot \Vert}M|_{\overline{U}} & = \left\{ (x,y) \in TM : x \in \overline{U}, \Vert y \Vert_x \leq 1 \right\}.
\end{align*}


Let
\begin{align*} 
	\mathcal A := \left\{(x,y) \in TU : x \in U, \Vert y \Vert \in \left[ \frac{1}{2}, \frac{2r_U}{r_u} \right] \right\}
\end{align*}
which contains $F^{-1}([r_U/2,2r_U])\cap TU$.


The next lemma is a direct consequence of Lemma 5 of \cite{Fukuoka-Setti}. 


\begin{lemma} 
	\label{lema F em TU}
	Let $F:TM \to [0, \infty)$ be a strongly convex $C^0$-Finsler structure, $U$ be an open subset satisfying \eqref{definindo U} and  $(\x, \y) = ((x^i),(y^i))$ be a coordinate system on $TU$ as defined in \eqref{sistema de coordenadas x,y}. Let $v\in {\mathbb R^n}$ be a nonzero vector and consider
	\begin{align*} 
		\varepsilon \in \left(0,\frac{r_u \Vert v \Vert}{4nr_U}\right],
	\end{align*}
	where $r_U$ and $r_u$ are defined by \eqref{constante r_U} and \eqref{constante rho_U}, respectively. Then
	\begin{align} \label{Desigualdade para F^2}
		F^2\left(x,w+t\frac{v}{\Vert v \Vert}\right) - F^2(x,w) > \frac{r_u^2}{32}t 
	\end{align}
	for every $x \in U,$ $w \in B_{\Vert \cdot \Vert}[x, v,\varepsilon]$ and $t>0$.
\end{lemma}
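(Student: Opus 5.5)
The plan is to obtain \eqref{Desigualdade para F^2} from convexity along the line $s \mapsto w + s u$, where $u := v/\Vert v\Vert$. The one-sided directional derivative of $F(x,\cdot)$ at $w$ in the direction $u$ will be bounded from below by a difference quotient taken \emph{backwards}, all the way to the point $w - v$, which lies very close to the origin. Throughout I use the comparison
\begin{align*}
r_u \Vert y \Vert \le F(x,y) \le r_U \Vert y\Vert, \qquad x \in U,\ y \in T_xM,
\end{align*}
which follows from \eqref{constante r_U}, \eqref{constante rho_U} and positive homogeneity. I also use $n \ge 1$, so that $\varepsilon \le r_u\Vert v\Vert/(4 r_U)$.

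First I locate $w$ and $w - v$. Fix $x\in U$ and $w \in B_{\Vert\cdot\Vert}[x,v,\varepsilon]$. Then $\Vert w - v\Vert \le \varepsilon$, so $F(x,w-v) \le r_U\varepsilon \le r_u\Vert v\Vert/4$, and likewise $F(x,v-w)\le r_u\Vert v\Vert/4$. By the triangle inequality of the asymmetric norm, $F(x,v)\le F(x,w)+F(x,v-w)$, hence
\begin{align*}
F(x,w) \ge r_u\Vert v\Vert - \tfrac14 r_u\Vert v\Vert = \tfrac34 r_u \Vert v\Vert.
\end{align*}

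Next I bound the directional derivative. The function $\phi(s) := F(x,w+su)$ is convex on $\mathbb R$, so its right derivative $\phi'_+(0)$ dominates every backward difference quotient. Taking $s = \Vert v\Vert$ (note that $w - \Vert v\Vert u = w - v$) gives
\begin{align*}
\phi'_+(0) \ge \frac{F(x,w)-F(x,w-v)}{\Vert v\Vert} \ge \tfrac34 r_u - \tfrac14 r_u = \tfrac12 r_u.
\end{align*}
Now $\psi(s) := F^2(x,w+su)=\phi(s)^2$ is strongly convex with modulus $\gamma_x$, by Theorem \ref{Fortemente convexa - Equivalencia}. Its right derivative at $0$ is $2\phi(0)\phi'_+(0)$, since $\phi(0)>0$. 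The strong-convexity inequality (the subgradient form, item (2) of Theorem \ref{Equivalencias - Fortemente convexa (Definicao Classica)} adapted to one-sided derivatives, or equivalently Definition \ref{Fortemente convexa - definicao artigo R-H} with $\alpha\in\partial F^2(x,w)$ chosen to realize the one-sided derivative) then yields, for $t>0$,
\begin{align*}
F^2(x,w+tu)-F^2(x,w) \ge 2F(x,w)\,\phi'_+(0)\,t + \tfrac{\gamma_x}{2}t^2 > \tfrac34 r_u^2 \Vert v\Vert\, t .
\end{align*}
Strictness comes from the term $\tfrac{\gamma_x}{2}t^2>0$.

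The main obstacle is that the resulting bound carries the factor $\Vert v\Vert$, whereas \eqref{Desigualdade para F^2} does not. This cannot be removed in general: by homogeneity, for tiny $v$ and $w$ the left-hand side behaves like $F^2(x,tu)\approx r^2t^2$, which is not $\gtrsim t$ as $t\to 0$. So I would make explicit the standing assumption under which the lemma is used, namely $\Vert v\Vert \ge 1/2$, as for points of $\mathcal A$. Under that assumption the bound becomes $\tfrac38 r_u^2 t > \tfrac{r_u^2}{32}t$, which gives the claim with room to spare, and the generous constant $1/32$ matches this reading. The only technical care beyond this is justifying $\psi'_+(0)=2\phi(0)\phi'_+(0)$ and using the one-sided-derivative version of the subgradient inequality for the merely convex (non-differentiable) $F^2(x,\cdot)$.
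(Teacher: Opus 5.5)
Your argument is correct and is essentially the paper's. The paper takes the slope bound $F(x,w+t\frac{v}{\Vert v\Vert})-F(x,w)>\frac{r_u}{8}t$ from Lemma 5 of \cite{Fukuoka-Setti} and passes to $F^2$ by the difference-of-squares identity. You instead derive a sharper slope bound ($\frac{r_u}{2}$) directly from convexity along the line through $w-v$, and pass to $F^2$ through the one-sided chain rule.

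Your caveat is also justified: for arbitrary nonzero $v$ the statement is false. For the Euclidean norm with $w=v$ and $\Vert v\Vert=\delta$, the left-hand side is $2\delta t+t^2$, which is smaller than $\frac{t}{32}$ for small $\delta,t$. The paper's difference-of-squares step tacitly needs the same lower bound on $F(x,w)$. The extra hypothesis $\Vert v\Vert\ge\frac12$ is exactly what holds in the only application, item (2) of Lemma \ref{lema propriedade de F_epsilon} on $\mathcal A$.
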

\begin{proof}
	The proof of Inequality \eqref{Desigualdade para F^2} follows from the difference-of-squares identity and the inequality
	\begin{align*} 
		F\left(x,w+t\frac{v}{\Vert v \Vert}\right) - F(x,w) > \frac{r_u}{8}t, 
	\end{align*}
	for every $x \in U,$ $w \in B_{\Vert \cdot \Vert}[x, v,\varepsilon]$ and $t>0$, contained in Lemma 5 of \cite{Fukuoka-Setti} (cf. Lemma 2.22 of \cite{Fukuoka-Setti2}).
\end{proof}


The {\it ``vertical" mollifier smoothing of} $F^2$ is defined by
\begin{align} \label{eta ast_v F^2}
	(\eta_\varepsilon \ast_v F^2)(x, y) = \int \eta_\varepsilon(z) F^2(x, y-z) \, dz, 
\end{align}
for $(x, y) \in TU \cong \x(U) \times \mathbb{R}^n$ and $\varepsilon \in (0, \tau),$ where
\begin{align} \label{define-tau}
	\tau \in \left(0, \frac{r_u}{8nr_U}\right),
\end{align} 
$r_U$ and $r_u$ are defined as in \eqref{constante r_U} and \eqref{constante rho_U}, respectively, and $\ast_v \coloneqq \ast_2$ represents the mollifier smoothing of the vertical part along the fibers of $TM$ (see Definition \ref{Definição ast_1 e ast_2}).

It follows directly from \eqref{eta ast_v F^2} that $(\eta_\varepsilon \ast_v F^2)$ is continuous and from Proposition \ref{propriedades da aplicacao mollifier} that $(\eta_\varepsilon \ast_v F^2)(x,\cdot)$ is smooth for every $x\in U.$


A natural question is whether the square root of the function in \eqref{eta ast_v F^2} is the desired vertical smoothing of $F$, but it fails because $\sqrt{(\eta_\varepsilon \ast_v F^2)(x, \cdot)}$ is not an asymmetric norm for every $x \in U.$ In fact, $(\eta_\varepsilon \ast_v F^2)(x, 0) > 0$ for every $x \in U.$


The next lemma will be used to construct the spheres in each tangent space of the vertical mollifier smoothing. 
We use the notation $\Hess_y[(\eta_\varepsilon \ast_v F^2)(x,y)]$ in order to denote the Hessian of $(\eta_\varepsilon \ast_v F^2)$ with respect to $(y^1,\ldots,y^n)$.


\begin{lemma}
	\label{lema propriedade de F_epsilon} 
	$(\eta_\varepsilon \ast_v F^2)$ has the following properties for every $\varepsilon \in \left(0, \tau\right):$
	\begin{enumerate}
		\item $\Hess_y[(\eta_\varepsilon \ast_v F^2)(x,y)](v, v) \geq \gamma_x \Vert v \Vert^2$ for every $(x, y) \in TU$ and $v \in \mathbb{R}^n.$
		\item The {\rm radial derivative} of $\left(\eta_\varepsilon \ast_v F^2\right)$ on $\mathcal A$ satisfies
		\begin{align*}
			\frac{y^i}{\Vert y\Vert}\left(\eta_\varepsilon \ast_v F^2\right)_{y^i}(x, y) > \frac{r_u^2}{32},\,\, \forall \, (x, y) \in \mathcal A.
		\end{align*}
		\item $\left(\eta_\varepsilon \ast_v F^2\right)(x, y)<r_U^2$ for every $(x, y) \in TU$ with $\Vert y \Vert \leq \frac{1}{2}.$
		\item $\left(\eta_\varepsilon \ast_v F^2\right)(x, y)>r_U^2$ for every $(x, y) \in TU$ with $\Vert y \Vert \geq \frac{2r_U}{r_u}.$
	\end{enumerate}
	In particular, we have that $\left\{ \eta_\varepsilon \ast_v F^2 = r_U^2\right\}\cap T_xU$ is a smooth hypersurface in $T_xU$ such that its radial projection onto  $S_{\|\cdot\|}[x,0,1]$ is a diffeomorphism.
\end{lemma}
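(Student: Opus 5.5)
Fix $x\in U$ and work on $T_xU\cong\mathbb R^n$ through the coordinates $(y^i)$; every claim is fiberwise. The plan is to prove the four items one at a time and then deduce the hypersurface statement from (1)--(4).

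\emph{Item (1).} Set $f=F^2(x,\cdot)$. By Theorem \ref{Fortemente convexa - Equivalencia}, $f$ is strongly convex with modulus $\gamma_x$. Lemma \ref{Suavização de função fortemente convexa} then shows that $\eta_\varepsilon\ast f$ is strongly convex with the same modulus. This function is smooth by Theorem \ref{propriedades da aplicacao mollifier}, so Item \eqref{item_convexidade forte caso suave} of Theorem \ref{Equivalencias - Fortemente convexa (Definicao Classica)} gives $\Hess_y(\eta_\varepsilon\ast_v F^2)(x,y)(v,v)\ge\gamma_x\Vert v\Vert^2$.

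\emph{Item (2).} Let $(x,y)\in\mathcal A$ and $u=y/\Vert y\Vert$. Differentiation under the integral gives
\[
\frac{y^i}{\Vert y\Vert}(\eta_\varepsilon\ast_vF^2)_{y^i}(x,y)=\lim_{t\to0^+}\int\eta_\varepsilon(z)\,\frac{F^2(x,y-z+tu)-F^2(x,y-z)}{t}\,dz .
\]
Apply Lemma \ref{lema F em TU} with $v=y$ and $w=y-z\in B_{\Vert\cdot\Vert}[x,y,\varepsilon]$. Its hypothesis on $\varepsilon$ holds because $\varepsilon<\tau<\frac{r_u}{8nr_U}\le \frac{r_u\Vert y\Vert}{4nr_U}$, using $\Vert y\Vert\ge 1/2$. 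Each difference quotient is therefore $>r_u^2/32$, and integrating against $\eta_\varepsilon$ (total mass $1$) gives $\ge r_u^2/32$. To get the strict inequality, I would first use convexity of $F^2(x,\cdot)$: the one-sided directional derivative is the infimum of the difference quotients, so it is still $>r_u^2/32$ pointwise. The integral of a strictly positive continuous excess over $\operatorname{supp}\eta_\varepsilon$ is then strictly larger.

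\emph{Items (3) and (4).} For $\Vert y\Vert\le 1/2$ and $\Vert z\Vert\le\varepsilon<1/2$ we have $\Vert y-z\Vert<1$. By \eqref{constante r_U} and homogeneity, $F^2(x,y-z)<r_U^2$, and averaging preserves this strict bound. For $\Vert y\Vert\ge 2r_U/r_u$ we have $\Vert y-z\Vert\ge 2r_U/r_u-\varepsilon> r_U/r_u$ since $\varepsilon$ is tiny. By \eqref{constante rho_U}, $F(x,y-z)\ge r_u\Vert y-z\Vert>r_U$, and averaging again gives the bound.

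\emph{Hypersurface statement.} Fix a unit vector $u$ and set $h(s)=(\eta_\varepsilon\ast_vF^2)(x,su)$. By (3) and (4), $h(1/2)<r_U^2<h(2r_U/r_u)$, and by (2), $h'>0$ on $[1/2,2r_U/r_u]$. So there is a unique $s(u)$ in that interval with $h(s(u))=r_U^2$, and by (3) and (4) there is no solution outside it. The gradient is nonzero there because its radial component is positive, so the level set is a smooth hypersurface. The implicit function theorem applied to $(s,u)\mapsto(\eta_\varepsilon\ast_vF^2)(x,su)$ shows $s(u)$ is smooth. Hence $u\mapsto s(u)u$ is a smooth inverse of the radial projection, which is therefore a diffeomorphism.

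The main obstacle is Item (2): checking the $\varepsilon$ range for Lemma \ref{lema F em TU}, and justifying the interchange of the derivative with the integral, which I would do via the formula $D^\alpha(\eta_\varepsilon\ast f)=(D^\alpha\eta_\varepsilon)\ast f$ from Remark \ref{derivada mollifier} together with dominated convergence.
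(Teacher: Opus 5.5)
Your Items (1), (3), (4) and the hypersurface conclusion are correct and follow the paper's route. In (4) you are more careful than the written proof. The paper only records $\Vert y-z\Vert>1$, but one needs $\Vert y-z\Vert>r_U/r_u$ to get $F(x,y-z)>r_U$. The paper also leaves the ``in particular'' part unproved, and your ray argument with the implicit function theorem supplies it.

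\textbf{The gap: strictness in Item (2).} Lemma \ref{lema F em TU} only tells you that every forward difference quotient $\frac{1}{t}\bigl(F^2(x,w+tu)-F^2(x,w)\bigr)$ is $>r_u^2/32$. The one-sided derivative is the infimum (limit as $t\to0^+$) of these quotients. An infimum of numbers each strictly larger than $c$ is only $\ge c$. So your claim that the derivative ``is still $>r_u^2/32$ pointwise'' does not follow, and neither does the strict inequality after integration. The ``continuous excess'' is also unjustified: one-sided derivatives of a merely continuous convex function are only upper semicontinuous in the base point, though measurability would suffice for the integral. As written you obtain $\ge r_u^2/32$. That is exactly what the paper's argument gives (``$r_u^2/32$ is a lower bound''), and it is all the hypersurface statement uses, since only positivity of the radial derivative matters there.

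\textbf{How to get strictness.} Use a backward difference quotient, exploiting the slack in $\varepsilon$. Put $h(t)=(\eta_\varepsilon\ast_vF^2)(x,y+tu)$ with $u=y/\Vert y\Vert$.
\begin{enumerate}
\item Since $\varepsilon<\tau<\frac{r_u}{8nr_U}\le\frac{r_u\Vert y\Vert}{4nr_U}=:\varepsilon'$, any $0<s\le\varepsilon'-\varepsilon$ works as follows. Every $w=y-su-z$ with $\Vert z\Vert\le\varepsilon$ lies in $B_{\Vert\cdot\Vert}[x,y,\varepsilon']$.
\item Lemma \ref{lema F em TU}, applied with $v=y$ and radius $\varepsilon'$, gives $F^2(x,w+su)-F^2(x,w)>\frac{r_u^2}{32}s$. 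Integrating, $h(0)-h(-s)>\frac{r_u^2}{32}s$.
\item Convexity of the smooth function $h$ (from Item (1)) then yields $h'(0)\ge\frac{h(0)-h(-s)}{s}>\frac{r_u^2}{32}$.
\end{enumerate}
Alternatively, strong convexity gives $h'(0)\ge\frac{h(0)-h(-s)}{s}+\frac{\gamma_x}{2}s$, so even a non-strict backward bound suffices. Either way you work directly with $h$, and no interchange of limit and integral is needed.
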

\begin{proof}
	\
	\begin{enumerate}
		\item It follows directly from Theorem \ref{Equivalencias - Fortemente convexa (Definicao Classica)} and Lemma \ref{Suavização de função fortemente convexa}.
		\item Consider $(x, y) \in \mathcal{A}.$ Then
		\begin{align*}
			& \left(\eta_\varepsilon \ast_v F^2\right) \left(x, y+t\frac{y}{\|y\|}\right)-\left(\eta_\varepsilon \ast_v F^2\right) (x, y) \\
			= & \int_{B_{\|\cdot\|}[x,0,\varepsilon]} \eta_\varepsilon(z)\left[ F^2\left(x, y+t\frac{y}{\|y\|}-z\right) - F^2(x, y-z) \right]  dz \\
			= & \int_{B_{\|\cdot\|}[x,y,\varepsilon]} \eta_\varepsilon(y-z)\left[ F^2\left(x, z+t\frac{y}{\|y\|}\right) - F^2(x, z) \right] dz \\
			> &  \frac{r_u^2}{32}t,
		\end{align*}
		for $t\in (0,1)$, where the inequality is due to Lemma \ref{lema F em TU}. Thus, $\frac{r_u^2}{32}$  is a lower bound for $\frac{y^i}{\Vert y \Vert}(\eta_\varepsilon \ast_v F^2)_{y^i}(y)$.  
		\item Consider $(x, y) \in TU$ with $\Vert y \Vert \leq \frac{1}{2}.$ We have
		\begin{align*}
			\left(\eta_\varepsilon \ast_v F^2\right)(x, y) = \int \eta_\varepsilon(z)F^2\left(x, y-z\right) \, dz,
		\end{align*}
		with $z \in {B_{\|\cdot\|}[x,0,\varepsilon]}.$ Then $\Vert y-z \Vert < 1,$ what implies that $F^2(x, y-z) < r_U^2$, and therefore  $\left(\eta_\varepsilon \ast_v F^2\right)(x, y) < r_U^2.$
		\item Consider $(x, y) \in TU$ with $\Vert y \Vert \geq \frac{2r_U}{r_u}.$ We have that $\Vert y-z \Vert > 1$ for $z \in {B_{\|\cdot\|}[x,0,\varepsilon]},$ what implies $F^2\left(x, y-z\right) >  r_U^2.$ Therefore  $\left(\eta_\varepsilon \ast_v F^2\right)(x, y) > r_U^2.$
	\end{enumerate}
\end{proof}

Let $\varepsilon \in (0, \tau).$ For every $x \in U$, Theorem \ref{teo_convexo e epigrafo} and Lemmas \ref{Suavização de função fortemente convexa} and \ref{lema propriedade de F_epsilon} implies that 
\begin{align}
	\label{equation_bola aberta Ftilepsilon}
	\left\{ y \in T_xU:  (\eta_\varepsilon \ast_v F^2)(x, y) \leq r_U^2\right\} 
\end{align}
is bounded, closed and convex. 
Item (3) of Lemma \ref{lema propriedade de F_epsilon} implies that \eqref{equation_bola aberta Ftilepsilon} contains the origin in its interior. 


Consider the application
\begin{align} \label{aplicacao F_epsilon til}
	\tilde{F}_\varepsilon: TU \rightarrow [0, \infty)
\end{align}
such that restricted to each tangent space $T_xU$ is an asymmetric norm with the sphere of radius $r_U$ equal to the boundary of the set \eqref{equation_bola aberta Ftilepsilon}. 

It is appropriate here to compare $\tilde{F}_{\varepsilon}$ with the vertical smoothing done in \cite{Fukuoka-Setti}.
For $x \in U$,
what determines $\tilde{F}_{\varepsilon}$ is the family of spheres $S_{\tilde{F}_{\varepsilon}}[x,0,r_U] = \{y \in T_xU: (\eta_{\varepsilon} \ast_{v} F^2)(x,y) = r_{U}^{2}\}$, whereas the vertical smoothing $G_{\varepsilon}$ in \cite{Fukuoka-Setti} is determined by the family of spheres 
$S_{G_{\varepsilon}}[x,0,r_U] = \{y \in T_xU: (\zeta_{\varepsilon} \ast_{v} F)(x,y) = r_{U}\}$. 
Although the construction in \cite{Fukuoka-Setti} uses a perturbation $\zeta_{\varepsilon}$ of $\eta_{\varepsilon}$ and smooths $F$ instead of $F^2$, the resulting spheres behave analytically in the same way.

We follow \cite{Fukuoka-Setti} and obtain an expression for $\tilde{F}_{\varepsilon}$, analogous to $G_{\varepsilon}$ (see \eqref{F_epsilon til em coordenadas esfericas}).
Once this expression is obtained, the results follow analogously to those in \cite{Fukuoka-Setti}, and most of their proofs can be omitted.

First of all, we represent every sphere locally as the graph of a function.
Consider the identification $TU \cong (x^i)(U) \times \mathbb{R}^n$.
Let $S_{\theta}$ be an open subset of $S_{\Vert \cdot\Vert}[0,1]$, and $C(S_{\theta})$ the open cone of $S_{\theta}$ with vertex at the origin, where we can define spherical coordinates
\begin{align*}
	(r, \theta):=(r,\theta^2,\ldots,\theta^n): C(S_\theta) \to (0, \infty) \times W_\theta.
\end{align*}
Fix $x_0 \in U$. By Item (2) of the Lemma \ref{lema propriedade de F_epsilon} the radial derivative of
$(\eta_\varepsilon \ast_v F^2)(x_0, r_0,\theta_0)$ is positive for every $(r_0,\theta_0) \in C(S_\theta)$. 
Then, by the Implicit Function Theorem, the sphere $S_{\tilde{F}_\varepsilon}[x_0,0,r_U$] can be written locally as the graph of a smooth function
\begin{align*}
	r = \phi_\varepsilon(x_0,\theta) \in \left(\frac{1}{2}, \frac{2r_U}{r_u} \right),
\end{align*}
for every $\theta \in W_\theta$. 
Varying $x \in U$ and using the fact that $\tilde{F}_\varepsilon$ is positively homogeneous of degree 1, we have
\begin{align} 
	\label{F_epsilon til em coordenadas esfericas} 
	\tilde{F}_\varepsilon (x, r,\theta) = \frac{r \cdot r_U}{\phi_\varepsilon(x,\theta)}
\end{align}
for every  $(x, r,\theta) \in U \times (0,\infty) \times W_\theta.$


\begin{proposition}
	\label{propriedades da aplicacao F_epsilon^til}  
	The function $\tilde{F}_\varepsilon$ defined in \eqref{aplicacao F_epsilon til} satisfies the following properties for every $\varepsilon \in \left(0,\tau\right):$
	\begin{enumerate}
		\item $\tilde{F}_\varepsilon$ is a $C^0$-Finsler structure.
		\item $\tilde{F}_\varepsilon(x,\cdot)$ is smooth in $T_xU \setminus \{0\}.$
	\end{enumerate}
\end{proposition}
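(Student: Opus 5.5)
Both items come down to properties of the sublevel set $K_x:=\{y\in T_xU:(\eta_\varepsilon\ast_v F^2)(x,y)\le r_U^2\}$ in \eqref{equation_bola aberta Ftilepsilon} and of the local graph functions $\phi_\varepsilon$ in \eqref{F_epsilon til em coordenadas esfericas}.

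For Item (1), fix $x\in U$. By Lemma \ref{Suavização de função fortemente convexa} and Theorem \ref{teo_convexo e epigrafo}, $K_x$ is compact and convex. By Item (3) of Lemma \ref{lema propriedade de F_epsilon} it contains $B_{\Vert\cdot\Vert}[x,0,1/2]$ in its interior. By Items (2) and (4), each ray from the origin meets $\partial K_x=\{\eta_\varepsilon\ast_v F^2=r_U^2\}$ exactly once, since the function increases strictly along the ray inside $\mathcal A$. Hence $\tilde F_\varepsilon(x,\cdot)=r_U\,\mu_{K_x}$, where $\mu_{K_x}$ is the Minkowski gauge of $K_x$. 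The gauge of a compact convex set containing the origin in its interior is a positively homogeneous, subadditive function vanishing only at $0$, so $\tilde F_\varepsilon(x,\cdot)$ is an asymmetric norm. It remains to prove that $\tilde F_\varepsilon$ is continuous on $TU$. At the zero section this follows from the uniform bounds
\[
\frac{r_u r_U}{2r_U}\Vert y\Vert\le \tilde F_\varepsilon(x,y)\le 2r_U\Vert y\Vert,
\]
which come from $\phi_\varepsilon\in(1/2,2r_U/r_u)$. Away from the zero section, \eqref{F_epsilon til em coordenadas esfericas} reduces continuity to continuity of $\phi_\varepsilon$ in $(x,\theta)$.

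Continuity of $\phi_\varepsilon$ is the main obstacle, because $\eta_\varepsilon\ast_v F^2$ is smooth in $y$ but only continuous in $x$, so the implicit function theorem cannot be applied in the $x$-variable. I would argue directly with the monotonicity in Item (2), which holds uniformly in $x$. Let $\Phi(x,r,\theta):=(\eta_\varepsilon\ast_v F^2)(x,r,\theta)$. This function is jointly continuous, because $F^2$ is uniformly continuous on compacta and the convolution integral is over a fixed ball. Given $(x_0,\theta_0)$ and $\delta>0$, set $r_0=\phi_\varepsilon(x_0,\theta_0)$. By Item (2),
\[
\Phi(x_0,r_0+\delta,\theta_0)>r_U^2+\tfrac{r_u^2}{32}\delta,\qquad \Phi(x_0,r_0-\delta,\theta_0)<r_U^2-\tfrac{r_u^2}{32}\delta.
\]
Joint continuity keeps these strict inequalities, with the constant $r_U^2$, for $(x,\theta)$ near $(x_0,\theta_0)$. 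Strict monotonicity in $r$ on $\mathcal A$ then gives $|\phi_\varepsilon(x,\theta)-r_0|<\delta$. I would also note that $\Phi$ is continuous in $x$ and smooth in $y$ under differentiation of the integral, via Remark \ref{derivada mollifier}, so the argument needs only continuity.

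For Item (2), fix $x$ and apply the implicit function theorem to the smooth function $(r,\theta)\mapsto\Phi(x,r,\theta)$ on $C(S_\theta)$. By Item (2) of Lemma \ref{lema propriedade de F_epsilon}, $\partial_r\Phi>r_u^2/32>0$ on the level set, so $\phi_\varepsilon(x,\cdot)$ is smooth on $W_\theta$. Since $\phi_\varepsilon>1/2$, the expression \eqref{F_epsilon til em coordenadas esfericas} shows that $\tilde F_\varepsilon(x,\cdot)$ is smooth on $C(S_\theta)$. Covering $S_{\Vert\cdot\Vert}[0,1]$ by finitely many such charts $S_\theta$ gives smoothness on $T_xU\setminus\{0\}$. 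Equivalently, this is the final sentence of Lemma \ref{lema propriedade de F_epsilon}: the radial projection of the level hypersurface onto the unit sphere is a diffeomorphism. The argument parallels Propositions 3 and 4 of \cite{Fukuoka-Setti}.
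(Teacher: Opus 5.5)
Your proposal is correct and follows essentially the route the paper intends (the paper only defers to Proposition 3 of \cite{Fukuoka-Setti}): the sublevel set \eqref{equation_bola aberta Ftilepsilon} is a compact convex body with the origin in its interior, so $\tilde F_\varepsilon(x,\cdot)$ is $r_U$ times its gauge; continuity reduces via \eqref{F_epsilon til em coordenadas esfericas} to continuity of $\phi_\varepsilon$; and fiberwise smoothness comes from the implicit function theorem using Item (2) of Lemma \ref{lema propriedade de F_epsilon}. Your argument for joint continuity of $\phi_\varepsilon$ in $(x,\theta)$ is sound and supplies the detail the paper leaves to the reference: you use joint continuity of $\eta_\varepsilon\ast_v F^2$ together with the uniqueness of the radial crossing, rather than the implicit function theorem, which only applies in the fiber variables.
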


\begin{proof}
	The proof is similar to that of Proposition 3 in \cite{Fukuoka-Setti}.
\end{proof}


\begin{proposition}
	\label{g epsilon radialmente convexo}
	The function $\tilde{F}_\varepsilon(x,\cdot): T_xU \rightarrow [0,\infty)$ is a Minkowski norm for every $x \in U$ and $\varepsilon \in (0, \tau)$.
\end{proposition}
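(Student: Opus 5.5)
The plan is to verify the three conditions of Definition \ref{norma de Minkowski} for $\tilde{F}_\varepsilon(x,\cdot)$, with $x\in U$ and $\varepsilon\in(0,\tau)$ fixed. Smoothness on $T_xU\setminus\{0\}$ is Item (2) of Proposition \ref{propriedades da aplicacao F_epsilon^til}. Positive homogeneity holds by construction, or directly from \eqref{F_epsilon til em coordenadas esfericas}. So the real content is positive definiteness of $\Hess_y \tilde{F}_\varepsilon^2(x,\cdot)$ at every $y\neq 0$.

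Write $h:=\tilde{F}_\varepsilon^2(x,\cdot)$ and $f:=(\eta_\varepsilon\ast_v F^2)(x,\cdot)$. The function $h$ is positively homogeneous of degree $2$, so its Hessian is homogeneous of degree $0$. It therefore suffices to check positive definiteness on the sphere
\[
S:=S_{\tilde{F}_\varepsilon}[x,0,r_U]=\{h=r_U^2\}=\{f=r_U^2\}.
\]
By Lemma \ref{lema propriedade de F_epsilon}, $S$ is a smooth hypersurface lying in $\mathcal A$, and it is transversal to the radial direction. Hence at each $y\in S$ we have the splitting $\mathbb R^n=T_yS\oplus\spann\{y\}$, and I will compute $\Hess h$ blockwise with respect to it.

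\emph{Radial and mixed blocks.} By Euler's relation for a degree-$2$ homogeneous function, $\Hess h_y(y,\cdot)=dh_y$. This gives $\Hess h_y(y,y)=dh_y(y)=2h(y)=2r_U^2>0$, and $\Hess h_y(y,v)=dh_y(v)=0$ for $v\in T_yS$, because $h$ is constant on $S$.

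\emph{Tangential block.} This is the main step. Both $\nabla h$ and $\nabla f$ are normal to $S$ at $y$, so $\nabla h=\lambda\nabla f$ for some scalar $\lambda$. To get the sign, pair both sides with $y$. On one side, $dh_y(y)=2r_U^2>0$. On the other, $df_y(y)>0$ by Item (2) of Lemma \ref{lema propriedade de F_epsilon}, since $y\in\mathcal A$. Hence $\lambda>0$.

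Now take $v\in T_yS$ and a smooth curve $c$ in $S$ with $c(0)=y$ and $c'(0)=v$. Differentiating $h(c(t))\equiv r_U^2$ and $f(c(t))\equiv r_U^2$ twice at $t=0$ gives
\[
\Hess h_y(v,v)+dh_y(c''(0))=0, \qquad \Hess f_y(v,v)+df_y(c''(0))=0.
\]
Since $dh_y=\lambda\, df_y$, this yields $\Hess h_y(v,v)=\lambda\,\Hess f_y(v,v)$. Item (1) of Lemma \ref{lema propriedade de F_epsilon} then gives $\Hess h_y(v,v)\geq\lambda\gamma_x\Vert v\Vert^2>0$ for $v\neq 0$.

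\emph{Conclusion.} With respect to the splitting $T_yS\oplus\spann\{y\}$, the form $\Hess h_y$ is block diagonal with both blocks positive definite. So $\left(g_{ij}\right)=\frac12\Hess h$ is positive definite on $S$, and by homogeneity on all of $T_xU\setminus\{0\}$. Therefore $\tilde{F}_\varepsilon(x,\cdot)$ is a Minkowski norm.

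The delicate points are the transversality of $S$ to the radial direction and the positivity of $\lambda$. Both rest on the radial-derivative bound in Item (2) of Lemma \ref{lema propriedade de F_epsilon}, which is exactly why that estimate was established on $\mathcal A$.
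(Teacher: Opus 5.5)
Your proof is correct. Euler's relation for the degree-$2$ homogeneous $h=\tilde F_\varepsilon^2(x,\cdot)$ handles the radial and mixed blocks, and the identity $\Hess h_y=\lambda\,\Hess f_y$ on $T_yS$ with $\lambda>0$ (forced by Item (2) of Lemma \ref{lema propriedade de F_epsilon}) transfers the strong convexity of Item (1) to the indicatrix. The paper itself only defers to Proposition 4 of \cite{Fukuoka-Setti}, so I cannot check its argument line by line, but your route appears to be essentially the one being invoked: the Minkowski property follows from the convexity of the smoothed function along its level hypersurface together with the positive radial derivative, which you have written out in a self-contained form.
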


\begin{proof} 
	The proof is similar to that of Proposition 4 in \cite{Fukuoka-Setti}.
\end{proof}


\begin{theorem}
	\label{F_epsilon^til^2 converge uniforme}
	$\tilde{F}_\varepsilon{}^2 \rightarrow F^2 $ uniformly on compact subsets of $TU\cong U \times \mathbb R^n$.
\end{theorem}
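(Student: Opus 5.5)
By positive homogeneity of degree $2$ of both $\tilde{F}_\varepsilon{}^2$ and $F^2$ in $y$, it is enough to prove uniform convergence on sets of the form $K\times S_{\Vert\cdot\Vert}[0,1]$, with $K\subset U$ compact. Indeed, any compact subset of $TU\cong U\times\mathbb R^n$ lies in some $K\times B_{\Vert\cdot\Vert}[0,R]$. On that set we have
\begin{align*}
\vert \tilde{F}_\varepsilon{}^2(x,y)-F^2(x,y)\vert = \Vert y\Vert^2\,\vert \tilde{F}_\varepsilon{}^2(x,y/\Vert y\Vert)-F^2(x,y/\Vert y\Vert)\vert \leq R^2 \sup_{K\times S}\vert\tilde{F}_\varepsilon{}^2-F^2\vert.
\end{align*}
Writing $y=r\theta$ with $\Vert\theta\Vert=1$, formula \eqref{F_epsilon til em coordenadas esfericas} gives $\tilde{F}_\varepsilon(x,\theta)=r_U/\phi_\varepsilon(x,\theta)$. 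Analogously, $F(x,\theta)=r_U/\phi(x,\theta)$, where $\phi(x,\theta)$ is the unique $r>0$ with $F^2(x,r\theta)=r_U^2$. This root exists and is unique because $r\mapsto F^2(x,r\theta)=r^2F^2(x,\theta)$ is strictly increasing, and it lies in $[1/r_u\cdot r_U/r_U\ldots]$; more precisely $\phi(x,\theta)\in[1,r_U/r_u]\subset(1/2,2r_U/r_u)$ by the definitions of $r_U,r_u$. Since $\phi_\varepsilon,\phi$ take values in $(1/2,2r_U/r_u)$, the maps $s\mapsto r_U^2/s^2$ are uniformly Lipschitz there. It therefore suffices to show $\phi_\varepsilon\to\phi$ uniformly on $K\times S$.

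The key step is a quantitative inverse-function argument along rays, using Item (2) of Lemma \ref{lema propriedade de F_epsilon}. Let $\delta_\varepsilon:=\sup\{\vert(\eta_\varepsilon\ast_v F^2)(x,y)-F^2(x,y)\vert : x\in K,\ \Vert y\Vert\le 2r_U/r_u\}$. By Proposition \ref{funcao convergencia uniforme} applied to $F^2$ (continuous) on the compact set $K\times B_{\Vert\cdot\Vert}[0,2r_U/r_u]$, we have $\delta_\varepsilon\to0$. Fix $(x,\theta)$ and consider the ray $r\mapsto(\eta_\varepsilon\ast_v F^2)(x,r\theta)$ on $[1/2,2r_U/r_u]$. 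By Item (2), its derivative exceeds $r_u^2/32$, so the function is increasing with slope at least $r_u^2/32$. It equals $r_U^2$ at $r=\phi_\varepsilon(x,\theta)$. At $r=\phi(x,\theta)$ its value differs from $F^2(x,\phi\theta)=r_U^2$ by at most $\delta_\varepsilon$. The mean value theorem then yields
\begin{align*}
\vert\phi_\varepsilon(x,\theta)-\phi(x,\theta)\vert\le \frac{32\,\delta_\varepsilon}{r_u^2},
\end{align*}
uniformly in $(x,\theta)\in K\times S$. Combining this with the Lipschitz estimate gives uniform convergence of $\tilde{F}_\varepsilon{}^2$ to $F^2$ on $K\times S$, and hence on compact sets.

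The main obstacle is making sure this mean value step is legitimate. Two points need checking. First, both $\phi$ and $\phi_\varepsilon$ must lie in the interval where Item (2) applies; this holds since $\mathcal A$ contains the relevant shell and Items (3) and (4) of Lemma \ref{lema propriedade de F_epsilon} locate $\phi_\varepsilon$ there. Second, the lower bound in Item (2) must be uniform in $\varepsilon\in(0,\tau)$ and in $x$, which is true since the constant $r_u^2/32$ does not depend on them. The constants $r_U,r_u$ are those attached to $U$, and $K\subset U$, so no further care about $x$ is needed.
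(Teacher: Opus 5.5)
Your proof is correct and takes essentially the route the paper indicates (it defers to Proposition 5 of \cite{Fukuoka-Setti}, built on the spherical-coordinate expression \eqref{F_epsilon til em coordenadas esfericas}): reduce by homogeneity to the Euclidean unit sphere bundle over a compact $K$, write $\tilde F_\varepsilon=r_U/\phi_\varepsilon$ and $F=r_U/\phi$ along rays, and combine uniform convergence of $\eta_\varepsilon\ast_v F^2$ with the radial monotonicity of Lemma \ref{lema propriedade de F_epsilon}, made quantitative through the slope bound $r_u^2/32$ and the mean value theorem, to get $\phi_\varepsilon\to\phi$ uniformly. The only blemish is the garbled interval ``$[1/r_u\cdot r_U/r_U\ldots]$'', which you immediately correct to $[1,r_U/r_u]$.
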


\begin{proof} The proof is similar to that of Proposition 5 in \cite{Fukuoka-Setti}.
\end{proof}


\begin{corollary}
	\label{F_epsilon^til converge uniforme}
	$\tilde{F}_\varepsilon \rightarrow F $ uniformly on compact subsets of $TU\cong U \times \mathbb R^n$.
\end{corollary}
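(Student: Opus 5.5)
The corollary follows from Theorem \ref{F_epsilon^til^2 converge uniforme}, via the elementary inequality $|a-b| \leq \sqrt{|a^2-b^2|}$, valid for all $a,b \geq 0$. To check it, assume without loss of generality $a \geq b \geq 0$. Then
\begin{align*}
	(a-b)^2 \leq (a-b)(a+b) = a^2-b^2 .
\end{align*}

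We apply this pointwise with $a=\tilde{F}_\varepsilon(x,y)$ and $b=F(x,y)$. Both are nonnegative: $F$ is a $C^0$-Finsler structure, and $\tilde{F}_\varepsilon$ is one by Proposition \ref{propriedades da aplicacao F_epsilon^til}. Fix a compact subset $K \subset TU \cong U\times\mathbb R^n$. Then
\begin{align*}
	\sup_{(x,y)\in K} \left|\tilde{F}_\varepsilon(x,y)-F(x,y)\right| \leq \left( \sup_{(x,y)\in K} \left|\tilde{F}_\varepsilon{}^2(x,y)-F^2(x,y)\right| \right)^{1/2}.
\end{align*}
By Theorem \ref{F_epsilon^til^2 converge uniforme}, the right-hand side tends to $0$ as $\varepsilon \to 0$. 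Hence $\tilde{F}_\varepsilon \to F$ uniformly on $K$.

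No step is a genuine obstacle: the argument needs no lower bound on $F$, so points with $y=0$ in $K$ cause no difficulty. An alternative route would write $|a-b| = |a^2-b^2|/(a+b)$ and bound $a+b$ from below. That requires a positive lower bound on $F$, which fails near the zero section, so the square-root inequality above is the right choice.
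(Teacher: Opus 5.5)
Your proof is correct and is essentially the paper's argument: the paper deduces the corollary from Theorem~\ref{F_epsilon^til^2 converge uniforme} together with the uniform continuity of the square root. Your inequality $|a-b|\le\sqrt{|a^2-b^2|}$ for $a,b\ge 0$ is simply an explicit global (H\"older-$\tfrac12$) modulus for that uniform continuity, so you do not need to check first that the values of $\tilde{F}_\varepsilon{}^2$ and $F^2$ on $K$ stay in a fixed compact subset of $[0,\infty)$.
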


\begin{proof}
	It follows from Theorem \ref{F_epsilon^til^2 converge uniforme}, and the fact that the square root function is uniformly continuous on compact subsets of $[0, \infty)$.
\end{proof}


\subsection{Mollifier smoothing of strongly convex $C^0$-Finsler structures}
\label{secao mollifier smoothing F}

In this subsection, we define a mollifier smoothing $F_\varepsilon:TM\rightarrow [0, \infty)$ of a strongly convex $C^0$-Finsler structure $F$.
This smoothing will be constructed similarly to the mollifier smoothing constructed in Section 6 of \cite{Fukuoka-Setti}, which was done using smooth partition of unity.


Let 
\begin{align*}
	\{(U_\lambda,\x_\lambda)\}_{\lambda\in\Lambda}
\end{align*} 
be a locally finite smooth structure on $M$  such that $U_\lambda$ is defined as $U$ in \eqref{definindo U}. 
Let 
\begin{align*} 
	\{\varphi_\lambda\}_{\lambda\in \Lambda}
\end{align*}
be a smooth partition of unity on $M$ subordinate to the covering  $\{U_\lambda\}_{\lambda\in\Lambda}$ of $M.$ For each $\lambda \in \Lambda,$ denote $V_\lambda=\mathrm{int}(\supp(\varphi_\lambda))$ and $\nu_\lambda=\dist_{{\mathbb R^n}}(\partial V_\lambda,\partial U_\lambda).$


Fix $\lambda \in \Lambda$.  Let $r_{U_\lambda}$, $r_{u_\lambda}$ and $\tau_\lambda$ as defined in \eqref{constante r_U}, \eqref{constante rho_U} and \eqref{define-tau}, respectively. Let $\delta_\lambda = \min\{\tau_\lambda,\nu_\lambda\}.$ For every $\varepsilon \in (0,\delta_\lambda)$  consider the function
\begin{align*}
	\tilde{F}_{\varepsilon,\lambda}:TU_\lambda \to [0, \infty) 
\end{align*}
defined as in \eqref{aplicacao F_epsilon til}.
By Proposition \ref{propriedades da aplicacao F_epsilon^til}, the function $\tilde{F}_{\varepsilon,\lambda}$ is continuous, then we can consider the {\it``horizontal'' mollifier smoothing of} $\tilde{F}_{\varepsilon,\lambda}{}^2$ which is defined by
\begin{align*}
	\left(\eta_\varepsilon \ast_h \tilde{F}_{\varepsilon,\lambda}{}^2\right)(x,y) = \int \eta_{\varepsilon}(x-z) \tilde{F}_{\varepsilon,\lambda}{}^2(z,y) dz, \hspace{5mm}\forall (x,y) \in TO_{\varepsilon,\lambda},
\end{align*}
where $O_{\varepsilon, \lambda}$ is an open subset of $U_\lambda$ containing $\overline{V_\lambda}$ such that $\left(\eta_\varepsilon \ast_h \tilde{F}_{\varepsilon,\lambda}{}^2\right)$ is well defined, and $\ast_h \coloneqq \ast_1$ represents the mollifier smoothing of the horizontal part on an open subset of $M$ (see the Definition \ref{Definição ast_1 e ast_2}).

In order to make the variation of $\varepsilon$ independent of $\lambda,$ define $\psi_\lambda:(0,1) \rightarrow (0,\delta_\lambda)$ by $\psi_\lambda(\varepsilon)=\varepsilon\delta_\lambda$ and set
\begin{align*} 
	F_{\varepsilon,\lambda}{}^2(x,y)=  \left(\eta_{\psi_\lambda(\varepsilon)} \ast_h \tilde{F}_{\psi_\lambda(\varepsilon),\lambda}{}^2\right)(x,y), 
\end{align*} 
where $\varepsilon \in (0,1)$ and $(x,y) \in TO_{\varepsilon, \lambda}$.
Thus, for each $\varepsilon\in (0,1),$ we can define the function $F_\varepsilon:TM\rightarrow [0,\infty)$ implicitly by
\begin{align} 
	\label{F epsilon definida em TM}
	F_\varepsilon{}^2(x,y) = \sum_{\lambda \in \Lambda} \varphi_\lambda(x) F_{\varepsilon,\lambda}{}^2(x,y).
\end{align}


\begin{theorem}
	\label{propriedades da aplicacao F epsilon definida em TM} 
	The function $F_\varepsilon: TM \rightarrow \mathbb [0, \infty)$ defined by \eqref{F epsilon definida em TM} satisfies the following properties for every $\varepsilon \in (0, 1)$:
	\begin{enumerate}
		\item $ {F}_{\varepsilon }$ is a Finsler structure of $M$ for every $\varepsilon \in (0,1).$
		\item $ {F}_{\varepsilon } \rightarrow F$ uniformly on compact subsets of $TM$ as $\varepsilon$ converges to zero.
	\end{enumerate}
\end{theorem}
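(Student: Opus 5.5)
For Item (1), we work one chart at a time and then glue with the partition of unity. Fix $\lambda$ and $\varepsilon\in(0,1)$, and write $\varepsilon_\lambda=\psi_\lambda(\varepsilon)$. The proof rests on three properties of $F_{\varepsilon,\lambda}{}^2$.

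First, smoothness on $TO_{\varepsilon,\lambda}\setminus 0$. By \eqref{F_epsilon til em coordenadas esfericas}, $\tilde F_{\varepsilon_\lambda,\lambda}{}^2(z,r,\theta)=r^2r_U^2/\phi_{\varepsilon_\lambda}(z,\theta)^2$. So it suffices to show that $\phi_{\varepsilon_\lambda}$ is continuous in $z$ and smooth in $\theta$, with all $\theta$-derivatives continuous in $(z,\theta)$. This should follow from the implicit function theorem applied to $(\eta_\varepsilon\ast_v F^2)(z,\cdot)$. That function is continuous in $z$, and so are all its vertical derivatives, because $D^\alpha_y(\eta_\varepsilon\ast_vF^2)=(D^\alpha\eta_\varepsilon)\ast_vF^2$ by Remark \ref{derivada mollifier}. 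Item (2) of Lemma \ref{lema propriedade de F_epsilon} gives a radial derivative bounded below uniformly. Once this is in place, the horizontal convolution $\eta_{\varepsilon_\lambda}\ast_h$ produces smoothness in $x$. Derivatives in $y$ pass under the integral, and mixed derivatives come from differentiating the kernel in $x$. Hence $F_{\varepsilon,\lambda}{}^2$ is smooth on $TO_{\varepsilon,\lambda}\setminus0$.

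Second, positive $2$-homogeneity in $y$ is clear, since the horizontal convolution preserves it.

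Third, positive definiteness of $\Hess_y$. By Proposition \ref{g epsilon radialmente convexo} each $\tilde F_{\varepsilon_\lambda,\lambda}{}^2(z,\cdot)$ has positive definite Hessian off the origin. Averaging these Hessians with the nonnegative weights $\eta_{\varepsilon_\lambda}(x-z)$ keeps them positive definite. Here we use that the Hessians are continuous in $(z,y)$, which was established in the first step.

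Gluing is straightforward. In $F_\varepsilon{}^2=\sum\varphi_\lambda F_{\varepsilon,\lambda}{}^2$ the sum is locally finite, $\varphi_\lambda\ge0$, and at each $x$ at least one $\varphi_\lambda(x)>0$. The $y$-Hessian of $F_\varepsilon{}^2$ is $\sum\varphi_\lambda(x)\Hess_y F_{\varepsilon,\lambda}{}^2$, which is positive definite. Each term is smooth near $\supp\varphi_\lambda\subset O_{\varepsilon,\lambda}$, and $2$-homogeneity is preserved, so $F_\varepsilon=\sqrt{F_\varepsilon{}^2}$ is a Finsler structure.

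For Item (2), take a compact $K\subset TM$. Only finitely many $\lambda$ have $\supp\varphi_\lambda$ meeting $\pi(K)$, so it suffices to show $F_{\varepsilon,\lambda}{}^2\to F^2$ uniformly on $K\cap \pi^{-1}(\supp\varphi_\lambda)$ for each such $\lambda$. We split
\[
|F_{\varepsilon,\lambda}{}^2-F^2|\le|\eta_{\varepsilon_\lambda}\ast_h(\tilde F_{\varepsilon_\lambda,\lambda}{}^2-F^2)|+|\eta_{\varepsilon_\lambda}\ast_hF^2-F^2|.
\]
The first term is bounded by $\sup|\tilde F_{\varepsilon_\lambda,\lambda}{}^2-F^2|$ over a slightly larger compact set $K'\subset TU_\lambda$ ($x$ moved by at most $\delta_\lambda$ inside $U_\lambda$). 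That supremum tends to $0$ by Theorem \ref{F_epsilon^til^2 converge uniforme}. The second term tends to $0$ by Proposition \ref{funcao convergencia uniforme}. Summing with the weights $\varphi_\lambda$ gives $F_\varepsilon{}^2\to F^2$ uniformly on $K$, and uniform continuity of the square root then gives $F_\varepsilon\to F$.

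The main obstacle is the joint regularity in the first step. $\tilde F_\varepsilon$ is only continuous in $x$, so we need that every $y$-derivative of $\tilde F_{\varepsilon,\lambda}{}^2$ is jointly continuous in $(x,y)$ and locally bounded. This is what justifies differentiating under the horizontal integral and keeping the averaged Hessian positive definite. We would handle it through the implicit function formulas for the derivatives of $\phi_\varepsilon$, as in Propositions 3 and 4 and Theorem 4 of \cite{Fukuoka-Setti}.
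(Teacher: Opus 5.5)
Your proposal is correct and follows the argument the paper intends. The paper's proof only defers to Theorem 4 of \cite{Fukuoka-Setti}, and you reproduce that structure: jointly continuous $y$-derivatives of $\tilde{F}_{\varepsilon,\lambda}{}^2$ via the implicit function theorem, a horizontal convolution that gives smoothness while averaging positive definite $y$-Hessians of the squared structures, gluing by the locally finite partition of unity, and convergence through the triangle-inequality split using Theorem \ref{F_epsilon^til^2 converge uniforme} and Proposition \ref{funcao convergencia uniforme}.
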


\begin{proof} The proof is similar to that of Theorem 4 in \cite{Fukuoka-Setti}.
\end{proof}


\begin{definition}
	The one parameter family of applications $F_\varepsilon$ given by (\ref{F epsilon definida em TM}) is a {\rm mollifier smoothing} of a strongly convex $C^0$-Finsler structure $F$.
\end{definition}


\subsection{Convergence of connections and curvatures}
\label{secao convergencia conexoes curvatura}

In this subsection, $(M, F)$ is a Finsler manifold. Let $F_\varepsilon: TM \to [0, \infty)$ be a mollifier smoothing of $F$ defined as in \eqref{F epsilon definida em TM}. We will study the convergence of the fundamental tensor, Chern connection, Cartan connection, Hashiguchi connection, Berwald connection and flag curvature of $(M, F_\varepsilon)$ to the corresponding objects of $(M, F)$. We represent geometric objects with respect to $F_\varepsilon$ with a subscript $\varepsilon$: for instance the fundamental tensor of $F_\varepsilon$ is denoted by $g_\varepsilon$.


The next lemma proves that the partial derivatives of objects with respect to $F_\varepsilon$ presented in the Equations \eqref{define distinguished}--\eqref{Rjikl abaixa} converge uniformly to the respective objects of $F$.


\begin{lemma}
	\label{several uniform convergences}
	Let $(M,F)$ be a Finsler manifold and $F_\varepsilon:TM \rightarrow [0,\infty)$ be a mollifier smoothing of $F$ as defined in \eqref{F epsilon definida em TM}.
	Let $U$ be an open subset of $M$ as defined in (\ref{definindo U}) and $(\x, \y) = ((x^i), (y^i))$ the coordinate system on $TU$ as defined in (\ref{sistema de coordenadas x,y}). For every multi-index $\alpha$, the following geometrical objects of $(M,F_\varepsilon)$ converges uniformly to the respective objects of $(M,F)$ on compact subsets of $TU\backslash 0$:
	\begin{align}
		D^\alpha (g_\varepsilon)_{ij} & \rightarrow D^\alpha g_{ij}. \nonumber \\
		D^\alpha (g_\varepsilon)^{ij} & \rightarrow D^\alpha g^{ij}. \nonumber \\
		D^\alpha (\ell_\varepsilon)^i & \rightarrow D^\alpha \ell^i. \nonumber \\
		D^\alpha (A_\varepsilon)_{ijk} & \rightarrow D^\alpha A_{ijk}. \nonumber \\
		D^\alpha (C_\varepsilon)_{ijk} & \rightarrow D^\alpha C_{ijk}. \nonumber \\
		D^\alpha (\gamma_\varepsilon)^i{}_{jk} & \rightarrow D^\alpha \gamma^i{}_{jk}. \nonumber \\
		D^\alpha (N_\varepsilon)^i{}_j & \rightarrow D^\alpha N^i{}_j. \nonumber \\
		D^\alpha (\Gamma_\varepsilon)^i{}_{jk} & \rightarrow D^\alpha \Gamma^i{}_{jk}. \nonumber \\
		D^\alpha (\sigma_\varepsilon)^i & \rightarrow D^\alpha \sigma^i. \nonumber \\
		D^\alpha (\dot A_\varepsilon)^i{}_{jk} & \rightarrow D^\alpha \dot A^i{}_{jk}. \nonumber \\
		D^\alpha (R_\varepsilon)_{jikl} & \rightarrow D^\alpha R_{jikl}. \nonumber
	\end{align}
\end{lemma}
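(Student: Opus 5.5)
The core of the argument is to prove $D^\alpha(F_\varepsilon^2)\to D^\alpha(F^2)$ uniformly on compact subsets of $TU\setminus 0$ for every multi-index $\alpha$ in $(x,y)$. Every object in the list is then a smooth expression in derivatives of $F^2$, so the convergence propagates to all of them.

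\emph{Reduction.} By \eqref{F epsilon definida em TM}, on $TU$ we have $F_\varepsilon^2=\sum_\lambda \varphi_\lambda F_{\varepsilon,\lambda}^2$, and only finitely many $\lambda$ contribute on a compact set. Leibniz's rule together with $\sum_\lambda\varphi_\lambda=1$ reduces the claim to
\[
D^\alpha F_{\varepsilon,\lambda}^2\to D^\alpha F^2
\]
uniformly on compact subsets of $T(V_\lambda\cap U)\setminus 0$, for each fixed $\lambda$. We may change coordinates from $\x_\lambda$ to $\x$, since the chain rule only involves smooth coefficient functions.

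\emph{Fixed chart, vertical smoothing.} Write $h_\varepsilon=\eta_\varepsilon\ast_v F^2$. Since $F^2$ is smooth on the slit bundle and the convolution is in $y$ only, Remark \ref{derivada mollifier} applied fiberwise, together with differentiation under the integral in $x$, gives $D^\alpha h_\varepsilon=\eta_\varepsilon\ast_v D^\alpha F^2$ away from the zero section. Hence $D^\alpha h_\varepsilon\to D^\alpha F^2$ uniformly on compact sets, by Proposition \ref{funcao convergencia uniforme} applied to the continuous function $D^\alpha F^2$.

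Next, $\phi_\varepsilon(x,\theta)$ is defined implicitly by $h_\varepsilon(x,\phi_\varepsilon,\theta)=r_U^2$, and the limit $\phi$ by $F^2(x,\phi,\theta)=r_U^2$, so that $\phi=r_U/F(x,1,\theta)$. By Item (2) of Lemma \ref{lema propriedade de F_epsilon}, $\partial_r h_\varepsilon>r_u^2/32$ uniformly in $\varepsilon$. By the implicit function theorem, each $D^\alpha\phi_\varepsilon$ is a polynomial in derivatives of $h_\varepsilon$ at $(x,\phi_\varepsilon,\theta)$, divided by a power of $\partial_r h_\varepsilon$. First $\phi_\varepsilon\to\phi$ uniformly, by Theorem \ref{F_epsilon^til^2 converge uniforme}. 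Then, by induction on $|\alpha|$ and using uniform continuity of $D^\beta F^2$ on compact sets, $D^\alpha\phi_\varepsilon\to D^\alpha\phi$ uniformly. Through \eqref{F_epsilon til em coordenadas esfericas} this gives $D^\alpha\tilde F_\varepsilon^2\to D^\alpha F^2$ uniformly on compact subsets of $TU\setminus 0$. Here $\phi_\varepsilon$ is bounded below by $1/2$, and spherical coordinates are smooth away from $y=0$.

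\emph{Horizontal smoothing.} We have
\[
D^\alpha(\eta_\varepsilon\ast_h\tilde F_\varepsilon^2)=\eta_\varepsilon\ast_h D^\alpha\tilde F_\varepsilon^2 .
\]
The $y$-derivatives pass under the integral, and the $x$-derivatives can be placed on $\tilde F_\varepsilon^2$ because it is now smooth in $x$. To see this, write
\[
\eta_\varepsilon\ast_h D^\alpha\tilde F_\varepsilon^2-D^\alpha F^2=\eta_\varepsilon\ast_h\bigl(D^\alpha\tilde F_\varepsilon^2-D^\alpha F^2\bigr)+\bigl(\eta_\varepsilon\ast_h D^\alpha F^2-D^\alpha F^2\bigr).
\]
The first term is small by the previous step, because the convolution is an average over an $\varepsilon$-ball. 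The second term is small by Proposition \ref{funcao convergencia uniforme}. With the reparametrization $\psi_\lambda$, this proves the claim for $F_\varepsilon^2$.

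\emph{Consequences.} The claim gives $D^\alpha (g_\varepsilon)_{ij}\to D^\alpha g_{ij}$. Since $\det g>0$ on compact sets, $D^\alpha(g_\varepsilon)^{ij}\to D^\alpha g^{ij}$ by Cramer's rule. Because $F>0$ on compact subsets of $TU\setminus 0$, the objects $F_\varepsilon=\sqrt{F_\varepsilon^2}$ and $1/F_\varepsilon$ converge with all derivatives, which handles $\ell^i$, $A$ and $C$. The remaining objects, $\gamma$, $N$, $\Gamma$ (via \eqref{Gamma almost metric compatibility}), $\sigma$, $\dot A$ and $R_{jikl}$, are polynomial expressions in the previous ones and their derivatives, including $\delta/\delta x^k$. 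Each of them therefore converges by Leibniz's rule.

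The main obstacle is the implicit-function step. It needs a uniform lower bound on the radial derivative, which Lemma \ref{lema propriedade de F_epsilon} supplies, and careful bookkeeping of the chain rule in the spherical coordinates. Everything else is routine.
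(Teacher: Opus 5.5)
Your proposal is correct and follows essentially the same route as the proof the paper defers to, namely Theorem 6 and Lemma 12 of \cite{Fukuoka-Setti}, adapted here to $\eta_\varepsilon\ast_v F^2$. You obtain uniform convergence of all mixed derivatives of $F_\varepsilon{}^2$ from the implicit-function formulas for $\phi_\varepsilon$ and the $\varepsilon$-uniform radial-derivative bound of Lemma \ref{lema propriedade de F_epsilon}, pass it through the horizontal convolution and the partition of unity via Leibniz's rule, and then propagate it algebraically to the listed objects.
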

\begin{proof} See Theorem 6 and Lemma 12 of \cite{Fukuoka-Setti}. 
\end{proof}


Now we study the uniform convergence of the flag curvature and the several connections of Finsler geometry, in such a way that the convergence doesn't depend on the choice of coordinate system.
We proceed similarly to \cite{Fukuoka-Setti}, and we recall some definitions from there for the sake of completeness.

\begin{definition}
	A {\rm$C^0$-Finsler structure on} $\pi^\ast TM$ is a continuous function $F_{\pi}:\pi^* TM \rightarrow [0, \infty)$ such that $F_{\pi}(x,y,\cdot):(\pi^\ast TM)_{(x,y)} \rightarrow \mathbb [0, \infty)$ is an asymmetric norm on $(\pi^\ast TM)_{(x,y)} \cong T_xM$.
\end{definition}

\begin{definition}
	\label{definition_uniform_convergence}
	Let $M$ be a smooth manifold and $K_{TM\backslash 0}$ be a compact subset of $TM\backslash 0$.
	We say that a one-parameter family of continuous sections $E_\varepsilon$ converges uniformly to a continuous section $E$ on $K_{TM\backslash 0}$ if there exist a $C^0$-Finsler structure $F_{\pi}$ on $\pi^\ast TM$ such that for every $\delta > 0$, there exist a $\beta > 0$ such that 
	\begin{align*}
		F_\pi (x,y,E_\varepsilon(x,y) - E(x,y)) < \delta
	\end{align*}
	for every $(x,y) \in K_{TM\backslash 0}$ and every $\varepsilon \in (0,\beta)$.
\end{definition}

\begin{remark}
	The definition of uniform convergence stated in Definition \ref{definition_uniform_convergence} doesn't depend on the choice of $F_{\pi}$.
\end{remark}


\begin{theorem} 
	\label{convergencia uniforme Chern}
	Let $(M,F)$ be a Finsler manifold and let $F_\varepsilon$ be a mollifier smoothing of $F$. 
	Let $X$ be an arbitrary section into $TM\backslash 0$ and $E$ be an arbitrary section into $\pi^*TM$.
	Then $\left( \nabla_\varepsilon \right)_X E$
	converges uniformly to $\nabla_X E$ on compact subsets $K_{TM \setminus 0}$ of $TM\setminus 0$.
	The statement also holds for the Cartan connection, Hashiguchi connection and Berwald connection.
\end{theorem}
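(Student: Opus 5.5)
The plan is to reduce everything to the coordinate convergences of Lemma \ref{several uniform convergences}. We will use a finite cover of the compact set by coordinate patches and a fixed reference norm on $\pi^\ast TM$.

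First, cover the compact set $K_{TM\setminus 0}$ by finitely many sets of the form $TU_\lambda\setminus 0$, with $U_\lambda$ as in \eqref{definindo U}. Shrink them so that $K_{TM\setminus 0}=\bigcup_m K_m$, where each $K_m$ is a compact subset of some $TU_{\lambda_m}\setminus 0$. Since uniform convergence on a finite union follows from uniform convergence on each piece, it suffices to work on a single $K_m$ inside one chart $(\x,\y)$.

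For $F_\pi$ in Definition \ref{definition_uniform_convergence}, take any continuous norm on $\pi^\ast TM$, for instance the pull-back of a Riemannian metric. On $K_m$, the remark following that definition shows that the choice does not matter. In the chart, $F_\pi$ is equivalent, uniformly on the compact $K_m$, to the Euclidean norm of the components with respect to $\{\partial/\partial x^i\}$. So it suffices to show that each component of $(\nabla_\varepsilon)_X E-\nabla_X E$ tends to $0$ uniformly on $K_m$.

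Next, write $X=X^k\frac{\partial}{\partial x^k}+Y^k\frac{\partial}{\partial y^k}$ and $E=E^i\frac{\partial}{\partial x^i}$, with continuous (smooth if the sections are smooth) coefficients on $TU\setminus 0$. By Theorem \ref{teorema conexao de Chern} and the torsion-free form $\omega_j{}^i=\Gamma^i{}_{jk}dx^k$,
\begin{align*}
(\nabla_\varepsilon)_X E-\nabla_X E=\big((\Gamma_\varepsilon)^i{}_{jk}-\Gamma^i{}_{jk}\big)E^jX^k\,\frac{\partial}{\partial x^i}.
\end{align*}
The term $X(E^i)$ does not depend on $\varepsilon$, so it cancels. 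The coefficients $E^j$ and $X^k$ are bounded on $K_m$, and $(\Gamma_\varepsilon)^i{}_{jk}\to\Gamma^i{}_{jk}$ uniformly on $K_m$ by Lemma \ref{several uniform convergences} with $\alpha=0$. Hence the difference tends to $0$ uniformly.

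For the other connections, Definition \ref{Cartan Hasiguchi Berwarld} gives their forms as $\omega_j{}^i$ plus $C^i{}_{jk}\delta y^k$ and/or $\dot A^i{}_{jk}dx^k$. Here $\delta y^k(X)=Y^k+N^k{}_lX^l$. The extra terms in the difference therefore have the form
\begin{align*}
\big((C_\varepsilon)^i{}_{jk}(Y^k+(N_\varepsilon)^k{}_lX^l)-C^i{}_{jk}(Y^k+N^k{}_lX^l)\big)E^j
\end{align*}
and $((\dot A_\varepsilon)^i{}_{jk}-\dot A^i{}_{jk})X^kE^j$. The tensor $(C_\varepsilon)^i{}_{jk}=(g_\varepsilon)^{is}(C_\varepsilon)_{sjk}$ is a product of uniformly convergent, locally bounded quantities from Lemma \ref{several uniform convergences}, and so is $(N_\varepsilon)^k{}_l$. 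Products and sums of such sequences converge uniformly on compacts, which handles these terms.

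The only delicate point is bookkeeping: the convergent quantities must be uniformly bounded on $K_m$, and the lemma's convergence must hold on compact subsets of $TU\setminus 0$ for the chosen chart. Both follow from the lemma and continuity of the limit objects. I expect no genuine obstacle beyond this.
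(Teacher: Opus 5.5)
Your proposal is correct and follows essentially the same route as the paper, whose proof defers to Theorem 7 of \cite{Fukuoka-Setti}. That route is to split the compact set into finitely many compact pieces inside coordinate charts, write the connection forms in coordinates so that $(\nabla_\varepsilon)_X E-\nabla_X E$ involves only $(\Gamma_\varepsilon)^i{}_{jk}-\Gamma^i{}_{jk}$ (plus the $C$, $N$ and $\dot A$ terms for the Cartan, Hashiguchi and Berwald connections, including the $\varepsilon$-dependence of $\delta y^k$ through $N_\varepsilon$, which you handle correctly), and conclude from the uniform convergences in Lemma \ref{several uniform convergences}.
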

\begin{proof} The proof is almost identical to the proof of Theorem 7 of \cite{Fukuoka-Setti}.
\end{proof}


Now we will study the uniform convergence of the flag curvature of $(M,F_\varepsilon)$ to that of $(M,F)$.
Denote the set of flags on $M$ by $\mathcal F$ and the set of flags on the tangent space $T_xM$ by $\mathcal F_x$.
The flag curvatures of $(M,F)$ and $(M,F_\varepsilon)$ will be denoted by $K$ and $K_\varepsilon$ respectively.


\begin{theorem}
	\label{curvatura flag converge uniforme}
	Let $(M,F)$ be a Finsler manifold and let $F_\varepsilon$ be a mollifier smoothing of $F$.
	Let $K_M$ be a compact subset of $M$ and $\delta >0$.
	Then there exist $\beta > 0$ such that
	\begin{align*}
		\sup_{x \in K_M} \sup_{(y,z) \in \mathcal F_x} \vert K(x,y,z) - K_\varepsilon(x,y,z) \vert < \delta
	\end{align*}
	for every $\varepsilon \in (0, \beta)$.
\end{theorem}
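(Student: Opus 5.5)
We prove the statement by localizing to finitely many charts and reducing the supremum over flags to a supremum over a compact set. Cover $K_M$ by finitely many coordinate neighborhoods $U_1,\ldots,U_m$, each as in \eqref{definindo U}, together with compact sets $K_1,\ldots,K_m$ with $K_a\subset U_a$ and $K_M\subset\bigcup_a K_a$. It then suffices to find, for each $a$, a $\beta_a>0$ with the required estimate for $x\in K_a$, and to take $\beta=\min_a\beta_a$. Fix one chart $U$ with compact $K\subset U$, and let $\Vert\cdot\Vert$ be the induced Euclidean norm. The flag curvature $K(x,y,z)$ is invariant under $y\mapsto \mu y$ for $\mu>0$: $R_{jikl}$ is homogeneous in $y$ and $g$ is $0$-homogeneous by Proposition \ref{g ij redimensionamento}, and the expression is scale invariant in the numerator and denominator. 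It also depends only on $\spann\{y,z\}$, as stated in Subsection \ref{subsection Finsler geometry}. The same holds for $K_\varepsilon$. So we may restrict to $\Vert y\Vert=1$, $\Vert z\Vert=1$ and $\langle y,z\rangle=0$, replacing $z$ by its normalized Euclidean component orthogonal to $y$. This changes neither $\spann\{y,z\}$ nor the flagpole. The parameter set
\[
\mathcal K=\{(x,y,z): x\in K,\ \Vert y\Vert=\Vert z\Vert=1,\ \langle y,z\rangle=0\}
\]
is compact, and its projection $\{(x,y)\}$ is a compact subset of $TU\setminus 0$.

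Next we show uniform convergence of the numerator and denominator on $\mathcal K$. Lemma \ref{several uniform convergences} with $\alpha=0$ gives $(R_\varepsilon)_{jikl}\to R_{jikl}$ and $(g_\varepsilon)_{ij}\to g_{ij}$ uniformly on the compact set $\{(x,y):(x,y,z)\in\mathcal K\}$. Since $y^j,z^i$ are bounded by $1$ on $\mathcal K$, the numerators $z^iy^jR_{jikl}y^lz^k$ converge uniformly, and so do the denominators $D_\varepsilon=g_\varepsilon(y,y)g_\varepsilon(z,z)-g_\varepsilon(y,z)^2$.

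The key point is a uniform positive lower bound on the denominators. For $F$ itself, $D=g(y,y)g(z,z)-g(y,z)^2$ is continuous and strictly positive on $\mathcal K$, by Cauchy--Schwarz for the positive definite $g_{(x,y)}$ and the linear independence of $y$ and $z$. Hence $D\ge c>0$ on $\mathcal K$. Uniform convergence then gives $D_\varepsilon\ge c/2$ for all $\varepsilon<\beta_0$. Writing
\[
K-K_\varepsilon=\frac{N-N_\varepsilon}{D}+N_\varepsilon\frac{D_\varepsilon-D}{DD_\varepsilon},
\]
with $N_\varepsilon$ uniformly bounded, we obtain a uniform estimate $\sup_{\mathcal K}|K-K_\varepsilon|<\delta$ for $\varepsilon$ small.

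The main obstacle is the reduction to $\mathcal K$: we must justify the invariances (scaling of $y$, and dependence of $z$ only through the span) for both $F$ and $F_\varepsilon$ simultaneously. This follows because $F_\varepsilon$ is a genuine Finsler structure by Theorem \ref{propriedades da aplicacao F epsilon definida em TM}, so all the standard properties apply to it. Once this is settled, the estimate above completes the proof.
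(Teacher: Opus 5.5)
Your proposal is correct and follows essentially the same route as the paper, which defers to the proof of Theorem 8 of \cite{Fukuoka-Setti}. That route is: localize $K_M$ to finitely many charts, and use the scale and span invariance of the flag curvature (valid for $F$ and for the Finsler structures $F_\varepsilon$) to restrict to the compact set of Euclidean-orthonormal pairs. Then combine the uniform convergence of $(R_\varepsilon)_{jikl}$ and $(g_\varepsilon)_{ij}$ from Lemma \ref{several uniform convergences} with a uniform positive lower bound on the denominator.
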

\begin{proof} The proof is almost identical to the proof of Theorem 8 of \cite{Fukuoka-Setti}.
\end{proof}


\subsection{Example}
\label{Example}

If $F$ is a Finsler structure, for each $p \in M$ there exists a coordinate system $\x:U \to \mathbb{R}^n$ of $M$ in $p$ and a positive constant $\gamma_p$ such that $F(x, \cdot)$ is strongly convex with respect to $\sqrt{\frac{\gamma_p}{2}}\Vert \cdot \Vert$. 
It would be natural to ask if a local convexity modulus $\gamma_p$ exists also for strongly convex $C^0$-Finsler structures, whether $\gamma_p$ has its influence on the local convexity modulus of $F_{\varepsilon}$ for a sufficiently small $\varepsilon$. 
We show in the next example that the existence of a local modulus of convexity doesn’t hold for strongly convex $C^0$-Finsler structures in general.

It is natural to ask whether it makes sense to study strongly convex $C^0$-Finsler structures with some kind of ``regularity" on its convexity modulus. The answer is positive in the case of left-invariant structures on Lie groups (see \cite{Fukuoka-Setti2}).
In \cite{Fukuoka-Setti2}, the convexity modulus of $F$ is constant and a common convexity modulus can be defined for $F$ and
its mollifier smoothing. Further cases of regularity will be addressed in a future work.


\begin{example} \rm Consider the Euclidean ball $M=B_{\Vert \cdot \Vert}(0,\frac{1}{2})$ in $\mathbb{R}^2$ endowed with the following radially symmetric $C^0$-Finsler structure $F$:
	\begin{align*}
		F^2(x^1,x^2,y^1,y^2)=\max\left\{(y^1)^2 + (y^2)^2 ,\frac{(y^1)^2}{1-\Vert x\Vert^2} + \Vert x\Vert^2 (y^2)^2 \right\}.
	\end{align*}
	Set $r=\Vert x\Vert$. 
	Direct calculations implies that
	\begin{align*}
		F^2(x^1,x^2,y^1,y^2) = 
		\left\{
		\begin{array}{ccc}
			(y^1)^2 + (y^2)^2 & \text{ if }& \vert y^2 \vert \geq \frac{r}{1-r^2}\vert y^1\vert\\
			\frac{(y^1)^2}{1-\Vert x\Vert^2} + \Vert x\Vert^2 (y^2)^2 & \text{ if }& \vert y^2 \vert \leq \frac{r}{1-r^2}\vert y^1\vert.
		\end{array}
		\right.
	\end{align*} 
	
	Denote by $\left< \cdot, \cdot \right>_r$ the inner product corresponding to the second part of the definition of $F^2$ and denote its norm by $\Vert \cdot \Vert_r$. 
	For a fixed $r>0$, it is not difficult to choose $w,z$ in the region where $F = \Vert \cdot \Vert_r$,  such that $w$ is proportional to $(1,0)$ and $z-w$ is proportional to $(0,1)$. 
	In this case, direct calculations give
	\begin{align}
		\label{align_equacao1_nao_lusc}
		F^2(x, z) & = \Vert z \Vert^2_r = \Vert w \Vert^2_r + 2 \left< w , z-w \right>_r + \Vert z-w \Vert^2_r \nonumber \\
		& = F^2(x,w) + r^2 \Vert z-w \Vert^2, 
	\end{align}
	where the last equality holds because $w \perp z-w$, the fact that $z-w$ is proportional to $(0,1)$ and the definition of $\Vert \cdot \Vert_r$. 
	Now notice that for a fixed $r>0$, $F^2\vert_{T_xM}$ is smooth at $w$ and its differential with respect to the variable $y$ is given by 
	\begin{align*}
		d_y(F^2)_w = \frac{2w^1}{1-\Vert x\Vert^2}dy^1.
	\end{align*}
	Then $d_y(F^2)_w (z-w) = 0$ and \eqref{align_equacao1_nao_lusc} is given by
	\begin{align*}
		F^2(x, z)=F^2(x,w) + d_y (F^2)_w (z-w) + r^2 \Vert z-w \Vert^2
	\end{align*}
	which can be compared to the definition of strong convexity for $F\vert_{T_xM}$.
	But $r>0$ can be taken arbitrarily near zero,  
	what implies that there is no $\gamma_{(0,0)} >0$ such that $F(x, \cdot)$ is strongly convex with respect to $\sqrt{\frac{\gamma_{(0,0)}}{2}} \Vert \cdot \Vert$ for every $x$ in neighborhood of $(0,0)\in M$.
\end{example}


\section{Mollifier smoothing of left-invariant strongly convex $C^0$-Finsler structures on Lie groups}
\label{section Mollifier smoothing of left-invariant strongly convex $C^0$-Finsler structures on Lie Groups}

In the remainder of this work, $G$ is an $n$-dimensional Lie group. We will denote by $e$ and $\mathfrak{g}$ the identity element and the Lie algebra of $G$, respectively. For every $x \in G$ we denote the left translation $x' \in G \mapsto x \cdot x' \in $ by $L_x$.
In Subsection \ref{subsection Mollifier smoothing of left-invariant strongly convex $C^0$-Finsler structures}, we will construct the  left-invariant mollifier smoothing of a left-invariant strongly convex $C^0$-Finsler structure $F:TG \to [0, \infty).$ To do this, we smooth $F_e :=F(e, \cdot): T_eG \cong \mathfrak{g}\cong \mathbb{R}^n \to [0, \infty)$ and use left translations to define the  left-invariant mollifier smoothing $F_\varepsilon$ of $F.$ This smoothing is different from the smoothing defined by \eqref{F epsilon definida em TM}.
In Subsection \ref{subsection Convergence of connections and curvatures on Lie groups}, we will prove that when $F$ is a left-invariant Finsler structure, then objects of the Finsler geometry of $(G, F_\varepsilon)$ such as the fundamental tensor, the Chern connection and the flag curvature converge to the corresponding objects of $(G,F)$.


\subsection{Mollifier smoothing of left-invariant strongly convex $C^0$-Finsler structures}
\label{subsection Mollifier smoothing of left-invariant strongly convex $C^0$-Finsler structures}

Let $F:TG \to [0, \infty)$ be a {\it left-invariant strongly convex $C^0$-Finsler structure}, that is, $F$ is a strongly convex $C^0$-Finsler structure and satisfies the following invariance condition
\begin{align} \label{left-invariant}
	F(x,y)= F(x' \cdot x, (dL_{x'})_{x}(y)),
\end{align}
for every $x, x' \in G$ and $y \in T_{x'}G.$

The vertical smoothing defined in Subsection \ref{section vertical smoothing} holds for $F$. In particular, we can smooth $F_e:\mathbb{R}^n \to [0, \infty)$ instead of $F|_{TU}$ and thus we do not need to consider the neighborhood $U.$
Therefore, with the necessary adaptations, we have that $(\tilde{F}_\varepsilon)_e := \tilde{F}_\varepsilon(e, \cdot):\mathbb{R}^n \to [0, \infty)$ is a Minkowski norm by Proposition \ref{g epsilon radialmente convexo} for every $\varepsilon \in (0, \tau)$, where $\tau$ is defined as in \eqref{define-tau}.

Then, for each $\varepsilon \in (0, \tau)$ we can define $F_\varepsilon: TG \to [0, \infty)$ by
\begin{align} \label{F epsilon definida em TG}
	F_\varepsilon(x, y) = (\tilde{F}_\varepsilon)_e ((dL_{x^{-1}})_x(y)) = \tilde{F}_\varepsilon(e, (dL_{x^{-1}})_x(y)). 
\end{align}
The smoothness of $F_\varepsilon$ on $TG\setminus 0$ is consequence of its left invariance, and $(\tilde{F}_\varepsilon)_e$ being a Minkowski norm with left invariance. Therefore, $F_\varepsilon$ is a left-invariant Finsler structure of $G$ for every $\varepsilon \in (0, \tau)$.


The next theorem was proved in preprint \cite{Fukuoka-Setti2}. Since its proof is available in a preprint and is not entirely straightforward, we have chosen to include it here in order to make the exposition more self-contained and facilitate the reading.  We have tried to remain as faithful as possible to the original proof, making only minor adjustments to the notation and to the references of the results used.

For the next result we will use the diffeomorphism $\varphi:G\times \mathfrak{g} \rightarrow TG $ defined by
\begin{align*} 
	\varphi(x,y)= (x,(dL_x)_e(y)).
\end{align*}
Its inverse $\varphi^{-1}: TG \rightarrow G\times \mathfrak{g}$ is given by
\begin{align*} 
	\varphi^{-1}(x,y)=(x,(dL_{x^{-1}})_x(y)).    
\end{align*}

\begin{theorem} $F_\varepsilon \to F$ uniformly on compact subsets of $TG$ as $\varepsilon$ converges to zero.
\end{theorem}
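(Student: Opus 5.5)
The plan is to reduce the statement to uniform convergence on compact subsets of the single tangent space $T_eG$, using the diffeomorphism $\varphi$. By left-invariance \eqref{left-invariant}, $F(x,y)=F_e((dL_{x^{-1}})_x(y))$. By definition \eqref{F epsilon definida em TG}, $F_\varepsilon(x,y)=(\tilde F_\varepsilon)_e((dL_{x^{-1}})_x(y))$. Writing $\pi_2:G\times\mathfrak g\to\mathfrak g$ for the second projection, we therefore have
\begin{align*}
F_\varepsilon = (\tilde F_\varepsilon)_e\circ \pi_2\circ\varphi^{-1}, \qquad F = F_e\circ\pi_2\circ\varphi^{-1}.
\end{align*}
Let $K\subset TG$ be compact. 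Since $\varphi^{-1}$ is continuous, $K' := \pi_2(\varphi^{-1}(K))$ is a compact subset of $\mathfrak g\cong T_eG\cong\mathbb R^n$. This gives
\begin{align*}
\sup_{(x,y)\in K}\vert F_\varepsilon(x,y)-F(x,y)\vert = \sup_{w\in K'}\vert (\tilde F_\varepsilon)_e(w)-F_e(w)\vert .
\end{align*}
So it suffices to prove that $(\tilde F_\varepsilon)_e\to F_e$ uniformly on compact subsets of $\mathbb R^n$.

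For this second step I would invoke Corollary \ref{F_epsilon^til converge uniforme}, specialised to a single fibre. The construction of Subsection \ref{section vertical smoothing} is applied to $F_e$ alone, with the Euclidean norm of a fixed basis of $\mathfrak g$ and the constants $r_U,r_u,\tau$ computed for $F_e$. Then $\{(\tilde F_\varepsilon)_e = r_U\}=\{\eta_\varepsilon\ast F_e^2=r_U^2\}$. If one prefers not to quote the corollary through the ``necessary adaptations'', the argument can be done directly, and I would proceed as follows.

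First, $\eta_\varepsilon\ast F_e^2\to F_e^2$ uniformly on compact sets by Theorem \ref{propriedades da aplicacao mollifier}. Second, fix a unit vector $\theta$ and let $\phi_\varepsilon(\theta)$ and $\phi(\theta)$ be the radii with $(\eta_\varepsilon\ast F_e^2)(\phi_\varepsilon(\theta)\theta)=r_U^2$ and $F_e^2(\phi(\theta)\theta)=r_U^2$. Lemma \ref{lema propriedade de F_epsilon} confines both radii to $[1/2,2r_U/r_u]$. Item (2) of that lemma, together with Lemma \ref{lema F em TU} (the same radial slope bound $r_u^2/32$ holds for $F_e^2$ itself), gives the estimate
\begin{align*}
\frac{r_u^2}{32}\,\vert\phi_\varepsilon(\theta)-\phi(\theta)\vert \le \sup_{\Vert w\Vert\le 2r_U/r_u}\vert(\eta_\varepsilon\ast F_e^2)(w)-F_e^2(w)\vert .
\end{align*}
Hence $\phi_\varepsilon\to\phi$ uniformly on the unit sphere. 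Third, by homogeneity $(\tilde F_\varepsilon)_e(r\theta)=r\,r_U/\phi_\varepsilon(\theta)$ and $F_e(r\theta)=r\,r_U/\phi(\theta)$. Since the denominators are bounded below by $1/2$, this gives uniform convergence on balls, which are all the compact subsets needed.

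The main obstacle is the radial comparison in the second step. One must evaluate the mollified function at $\phi(\theta)\theta$ and use the monotonicity along rays to bound the displacement of the level set. This needs a strict lower bound on the radial increments valid on the whole annulus $\mathcal A$. That bound is exactly what Lemma \ref{lema F em TU} and item (2) of Lemma \ref{lema propriedade de F_epsilon} provide, applied to increments of the form $t>0$ in both directions along the ray. Everything else is the formal transfer through $\varphi$, which uses only continuity of $\varphi^{-1}$ and left-invariance.
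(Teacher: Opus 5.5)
Your proposal is correct and follows essentially the same route as the paper: you factor both $F_\varepsilon$ and $F$ through $\pi_2\circ\varphi^{-1}$, note that $\pi_2(\varphi^{-1}(K))$ is compact in $\mathfrak g$, and conclude from Corollary \ref{F_epsilon^til converge uniforme} applied to $F_e$. Your optional direct argument, which compares the radii of the level sets $\{\eta_\varepsilon\ast F_e^2=r_U^2\}$ and $\{F_e^2=r_U^2\}$ using the radial slope bound on $\mathcal A$, is sound and simply unpacks what that corollary (via Theorem \ref{F_epsilon^til^2 converge uniforme}) already provides.
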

\begin{proof} Let $K_{TG}$ be a compact subset of $TG.$ Given $\mu>0$ we must find $\tilde{\tau} \in (0, \tau]$ such that
	\begin{align*}
		|F_\varepsilon(x,y) - F(x,y)|  < \mu,
	\end{align*}
	for every $(x,y) \in K_{TG}$ whenever $\varepsilon \in (0, \tilde{\tau}).$
	
	Since $K_{TG} \subset TG$  is compact, then $K_{\mathfrak{g}} = \pi_2(\varphi^{-1}(K_{TG}))$ is a compact subset of $\mathfrak{g}\cong  \mathbb{R}^n,$ where $\pi_2:G\times \mathfrak{g} \rightarrow \mathfrak{g}$ is given by $\pi_2(x,y)=y.$
	
	Note that
	\begin{align*}
		F(x,y) = F_e((dL_{x^{-1}})_x (y)) = F_e(\pi_2(\varphi^{-1}(x,y)))
	\end{align*}
	and
	\begin{align*}
		F_\varepsilon(x,y) = (\tilde{F}_\varepsilon)_e((dL_{x^{-1}})_x (y)) = (\tilde{F}_\varepsilon)_e(\pi_2(\varphi^{-1}(x,y))).
	\end{align*}
	So
	\begin{align*}
		|F_\varepsilon(x,y) - F(x,y)|  = \left|(\tilde{F}_\varepsilon)_e(\pi_2(\varphi^{-1}(x,y))) - F_e(\pi_2(\varphi^{-1}(x,y)))\right|.
	\end{align*}
	
	By Corollary \ref{F_epsilon^til converge uniforme}, for each $\mu>0,$ there exists $\tilde{\tau}\in (0, \tau]$ such that 
	\begin{align*}
		\left|(\tilde{F}_\varepsilon)_e(z) - F_e(z)\right|  < \mu,
	\end{align*}
	for every $z \in K_{\mathfrak{g}}$ whenever $\varepsilon \in (0, \tilde{\tau})$, in particular for $z=\pi_2(\varphi^{-1}(x,y)).$ This proves that $F_\varepsilon$ converges uniformly to $F$ on $K_{TG}$ as $\varepsilon$ converges to zero.
\end{proof}


\begin{definition}
	The one parameter family of applications $F_\varepsilon$ given by \eqref{F epsilon definida em TG} is a {\rm left-invariant mollifier smoothing} of a left-invariant strongly convex $C^0$-Finsler structure $F$.
\end{definition}


\subsection{Convergence of connections and flag curvature on Lie groups}
\label{subsection Convergence of connections and curvatures on Lie groups}

Let $G$ be a Lie group endowed with a left-invariant Finsler structure $F:TG \to [0, \infty)$.
Let $\mathcal{B} = \{e_1, \ldots, e_n \}$ be a basis of $\mathfrak{g}$ and denote the corresponding left-invariant vector fields by $e_i(x) := d(L_x)_e(e_i)$.
Denote the dual basis of $\mathcal{B}$ by $\mathcal{B}^{\ast} = \{\omega^1, \ldots, \omega^n\}$.
Let $\boldsymbol{x} = (x^1, \ldots, x^n):U \rightarrow \mathbb{R}^n$ be a coordinate system on a neighborhood $U$ of $e$ such that $e=(0,\ldots, 0)$. 

The coordinate vector fields and the left-invariant vector fields are related on $U$ by
\begin{align*}
	e_a = w_a^{\ i} \, \frac{\partial}{\partial x^i}, & &  \frac{\partial}{\partial x^i} = v_i^{\ a} \, e_a, & & \\
	\omega^a = v^a_{\ i} \, dx^i, & & dx^i = w^i_{\ a} \, \omega^a, & & 
\end{align*}
where $v$ is the inverse matrix of $w$.

Then,
\begin{align}
	F^2(x,y)  & = F^2(x^\prime \cdot x, d(L_{x^\prime})_x (y)) = F^2(e,d(L_{x^{-1}})_x(y)) \nonumber \\
	& = F_e{}^2 \left(d(L_{x^{-1}})_x \left(y^i \frac{\partial}{\partial x^i}\right)\right) = F_e{}^2\left(d(L_{x^{-1}})_x \left(y^i \, v_i{}^a(x) \, e_a(x) \right)\right) \nonumber \\
	& = F_e{}^2(y^i \, v_i{}^a(x) \, e_a(e)) = F_e{}^2 (y^i \, v_i{}^a(x) \, e_a), \label{align_formula_Fquadrado_algebra}
\end{align}
which is the local representation of the left-invariant $F^2$ on its Lie algebra.
An analogous formula also holds for $F$.
Let us calculate some partial derivatives of $F^2$ with respect to the coordinate system $(x^1, \ldots, x^n, y^1, \ldots, y^n)$.
\begin{align*}
	\frac{\partial F^2}{\partial y^j}(x,y) & = d (F_e{}^2)_{y^i \, v_i{}^a(x) \, e_a}(v_j{}^b(x) \, e_b) =  v_j{}^b(x) \, d (F_e{}^2)_{y^i \, v_i{}^a(x) \, e_a}(e_b) 
\end{align*}
due to \eqref{align_formula_Fquadrado_algebra}. 
Calculating another partial derivative, we have the coefficients of the fundamental tensor:
\begin{align}
	g_{jk(x,y)}=\frac{1}{2} \, \frac{\partial^2 F^2}{\partial y^j\partial y^k}(x,y) & = \frac{1}{2}  \, v_j{}^b(x)  \, v_k{}^c(x)  \, d^2 (F_e{}^2)_{y^i  \, v_i{}^a(x)  \, e_a}(e_b, e_c).  \label{align_formula_fundamental_tensor}
\end{align}

The calculation of the third derivative is analogous.
Considering \eqref{align_formula_fundamental_tensor}, we have
\begin{align}
	C_{jkl(x,y)} & = \frac{1}{2}  \, \frac{\partial}{\partial y^l}g_{jk(x,y)} = \frac{1}{4}  \, \frac{\partial}{ \partial y^l}\left( v_j{}^b(x)  \, v_k{}^c(x)  \, d^2 (F_e{}^2)_{y^i  \, v_i{}^a(x)  \, e_a}(e_b, e_c) \right) \nonumber \\
	& = \frac{1}{4}  \,  v_j{}^b(x)  \, v_k{}^c(x)  \, v_l{}^d(x)  \, d^3 (F_e{}^2)_{y^i  \, v_i{}^a(x)  \, e_a}(e_b, e_c, e_d),  \label{align_Cartan_so-depende-de-Fe}
\end{align}
and we have a corresponding formula for the Cartan tensor $A_{ijk} = F  \, C_{ijk}$ on $\pi^\ast TG$.

Let us calculate $\frac{\partial}{\partial x^l}g_{ij(x,y)}$ considering \eqref{align_formula_fundamental_tensor}, which is a term that appears, for instance, in the formal Christoffel symbols.
Observe that the functions $v_t{}^s$, $s,t=1,\ldots, n$, are the only terms in the right-hand side of \eqref{align_formula_fundamental_tensor} that depend on $x^l$, $l=1,\ldots, n$. 
\begin{align}
	\frac{\partial}{\partial x^l}g_{jk(x,y)} 
	& = \frac{\partial}{\partial x^l}\left(\frac{1}{2} \, v_j{}^b(x) \, v_k{}^c(x) \, d^2 (F_e{}^2)_{y^i \, v_i{}^a(x) \, e_a}(e_b, e_c)\right) \nonumber \\
	& = \frac{1}{2} \, \frac{\partial v_j{}^b}{\partial x^l} \, v_k{}^c \, d^2 (F_e{}^2)_{y^i \, v_i{}^a \, e_a}(e_b, e_c) \nonumber \\
	& + \frac{1}{2} \, v_j{}^b \, \frac{\partial v_k{}^c}{\partial x^l} \, d^2 (F_e{}^2)_{y^i \, v_i{}^a \, e_a}(e_b, e_c) \nonumber \\
	& + \frac{1}{2} \, v_j{}^b \, v_k{}^c \,  y^s \, \frac{\partial v_s{}^d}{\partial x^l} \, d^3(F_e{}^2)_{y^i \, v_i{}^a \, e_a }(e_b, e_c, e_d). \label{align_derivada-gij-em-relacao-a-x}
\end{align}

Proceeding as in \eqref{align_formula_fundamental_tensor}, \eqref{align_Cartan_so-depende-de-Fe} and \eqref{align_derivada-gij-em-relacao-a-x}, it is not difficult to see that partial derivatives of any order of $F^2$ with respect to $x^i$ or $y^j$ can be written as a sum of products of
\begin{itemize} 
	\item $d^m(F_e{}^2)_\mathfrak{y}(e_{a_1}, \ldots, e_{a_m}), \ a_1, \ldots, a_m \in \{1, \ldots, n\}$ and $\mathfrak{y} = y^i \, v_i{}^a \, e_a$; 
	\item $y^i$, $i=1,\ldots, n$; 
	\item $v_j^{\ a}(x)$, $a,j=1, \ldots, n$;
	\item partial derivatives of any order of $v_j^{\ a}(x)$ with respect to variables $x^l$, $l=1,\ldots, n$.
\end{itemize}

The elements $g^{ij}$ of the inverse $g^{-1}$ of the fundamental tensor $g$ can be written as a quotient of sums of products of the elements itemized above due to the formula of $g^{ij}$ in terms of $g_{ij}$.
In the same way, partial derivatives of $g^{ij}$ of any order with respect to $x^i$ or $y^j$ can be written as a quotient of sums of products of elements itemized above.  
Therefore we have the following result due to Equations \eqref{define distinguished}--\eqref{Rjikl abaixa}.

\begin{theorem}
	\label{theorem_geometric_objects}
	The components of the following elements of the Finsler geometry of $(G,F)$ can be written as a quotient of sums of products of
	\begin{itemize} 
		\item $F_e(y^i \, v_i{}^a \, e_a)$;
		\item $d^m(F_e{}^2)_\mathfrak{y}(e_{a_1}, \ldots, e_{a_m}), \ a_1, \ldots, a_m \in \{1, \ldots, n\}$ and $\mathfrak{y} = y^i \, v_i{}^a \, e_a$;
		\item $y^i$, $i=1,\ldots, n$; 
		\item $v_j^{\ a}(x)$, $a,j=1, \ldots, n$;
		\item partial derivatives of any order of $v_j^{\ a}(x)$ with respect to variables $x^l$, $l=1,\ldots, n$.
	\end{itemize}
	\begin{enumerate}
		\item Fundamental tensor: $g_{ij}$.
		\item Inverse tensor: $g^{ij}$.
		\item  Cartan tensor: $A_{ijk}$. 
		\item  Cartan tensor (without normalization): $C_{ijk} = \frac{1}{F} \, A_{ijk}$. 
		\item Formal Christoffel symbols: $\gamma^{i}{}_{jk}.$
		\item Nonlinear connection: $N^i{}_j.$
		\item Chern connection: $\Gamma^i{}_{jk}.$
		\item Horizontal covariant derivative of the Cartan tensor $A$ along the distinguished (horizontal) direction $\hat{\ell} := \ell^i\frac{\delta}{\delta x^i}$: $\dot{A}.$
		\item The following connection forms:
		\begin{itemize}
			\item Cartan connection: 
			\begin{align*}
				\Gamma^i{}_{jk} \, dx^k + C^i_{jk} \, \delta y^k.
			\end{align*}
			\item Hashiguchi connection: 
			\begin{align*}
				\left( \Gamma^i{}_{jk} + \dot{A}^i{}_{jk} \right) dx^k + C^i{}_{jk} \, \delta y^k.
			\end{align*}
			\item Berwald connection:
			\begin{align*}
				\left( \Gamma^i{}_{jk} + \dot{A}^i_{jk} \right) dx^k.
			\end{align*}
		\end{itemize}
		\item $hh$-curvature tensor: $R_j{}^i{}_{kl}.$
	\end{enumerate}
	\item Moreover if we fix $x \in G$ and two transverse vectors $y,z\in T_xG$, the flag curvature $K(y,z)$, depends only on the quotient of sums of products of the aforementioned five items and $z^i$, $i=1,\ldots, n$.
\end{theorem}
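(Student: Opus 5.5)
The plan is an induction on the order of the formulas (3.x)--(3.y), building on the observation just made before the statement. Call an expression \emph{admissible} if it is a quotient of sums of products of the five listed items. First I check that admissible expressions are closed under sums, products and quotients; this is immediate, since a quotient of quotients is again a quotient of sums of products. The key point is then closure under $\partial/\partial x^l$ and $\partial/\partial y^l$. By the quotient rule it suffices to differentiate each of the five generators and show the result is admissible:
\begin{itemize}
\item $y^i$ and $v_j{}^a(x)$, together with their $x$-derivatives, obviously stay in the list.
\item For $d^m(F_e{}^2)_{\mathfrak y}(e_{a_1},\ldots,e_{a_m})$, the chain rule with $\mathfrak y = y^i v_i{}^a(x) e_a$ gives $v_l{}^d\, d^{m+1}(F_e{}^2)_{\mathfrak y}(\ldots,e_d)$ for the $y^l$-derivative. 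It gives $y^s (\partial v_s{}^d/\partial x^l)\, d^{m+1}(F_e{}^2)_{\mathfrak y}(\ldots,e_d)$ for the $x^l$-derivative, exactly as in \eqref{align_derivada-gij-em-relacao-a-x}.
\item For $F_e(\mathfrak y)$, write $\partial F_e(\mathfrak y) = \partial(F_e{}^2)(\mathfrak y)/(2F_e(\mathfrak y))$. This is admissible because $F_e(\mathfrak y)\neq 0$ on $TG\setminus 0$.
\end{itemize}

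With closure in hand, I go through the list in the order the objects are defined.
\begin{enumerate}
\item $g_{ij}$ is given by \eqref{align_formula_fundamental_tensor}.
\item $g^{ij}$ is the cofactor matrix divided by $\det g$, hence admissible. The determinant is nonzero since $g$ is positive definite.
\item $C_{ijk}$ comes from \eqref{align_Cartan_so-depende-de-Fe}, and then $A_{ijk}=F\,C_{ijk}$ with $F=F_e(\mathfrak y)$ by the analogue of \eqref{align_formula_Fquadrado_algebra}. Likewise $\ell^i=y^i/F_e(\mathfrak y)$.
\item $\gamma^i{}_{jk}$ is built from $g^{is}$ and $x$-derivatives of $g_{ij}$ by \eqref{definicao formal Christoffel simbols}.
\item Then follow, in order, $N^i{}_j$ from \eqref{definicao non linear connection} and $\Gamma^i{}_{jk}$ from \eqref{Gamma almost metric compatibility}.
\item Next $\sigma^i$ and $\dot A^i{}_{jk}$ from \eqref{definicao Gi} and \eqref{define a dot}.
\item The connection forms of Definition \ref{Cartan Hasiguchi Berwarld} have components that are sums of the previous coefficients. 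The term $C^i{}_{jk}\,\delta y^k = C^i{}_{jk}(dy^k + N^k{}_l\,dx^l)$ has coefficients $C^i{}_{jk}$ and $C^i{}_{jk}N^k{}_l$ in the basis $dx,dy$.
\item Finally $R_j{}^i{}_{kl}$ follows from its expression as $\delta\Gamma/\delta x$ plus quadratic terms. Here $\delta/\delta x^k=\partial/\partial x^k - N^s{}_k\,\partial/\partial y^s$ is a combination of partial derivatives with admissible coefficients, and the lowered $R_{jikl}$ comes from \eqref{Rjikl abaixa}.
\end{enumerate}

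For the flag curvature I fix $x$, $y$, $z$ and use the formula for $K(y,z)$. The numerator is $z^i y^j R_{jikl} y^l z^k$. The denominator is $g(y,y)g(z,z)-g(y,z)^2$ with $g=g_{(x,y)}$, and it is nonzero since $y,z$ are independent. Both are sums of products of admissible expressions with $z^i$ and $y^i$, which gives the last item.

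The only real obstacle is making the closure-under-differentiation step rigorous for every order. I would phrase it as an induction on the order of the derivative, with the chain rule for $\mathfrak y(x,y)$ as the base computation, as sketched for \eqref{align_derivada-gij-em-relacao-a-x}. Along the way I must track that every denominator introduced is a power of $F_e(\mathfrak y)$ or of $\det g$, so that nothing vanishes on $TG\setminus 0$.
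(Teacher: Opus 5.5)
Your proposal is correct and follows essentially the same route as the paper: the chain rule through $\mathfrak{y} = y^i v_i{}^a(x) e_a$ as in \eqref{align_formula_fundamental_tensor}--\eqref{align_derivada-gij-em-relacao-a-x}, the cofactor formula for $g^{ij}$, and then a pass through the defining formulas of the geometric objects. Your version is more systematic than the paper's ``it is not difficult to see'': you make explicit the closure of the admissible class under algebraic operations and under $\partial/\partial x^l$ and $\partial/\partial y^l$, and you handle the factor $F_e(\mathfrak{y})$ (in $\ell^i$ and $A_{ijk}$) and the $\delta y^k$ terms explicitly.
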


\

Let $F_{\varepsilon}$ be a left-invariant mollifier smoothing of $F$.
Now we consider the convergence of geometric objects of Finsler geometry of $(G,F_\varepsilon)$ to the corresponding objects of $(G,F)$ as $\varepsilon \rightarrow 0$.
As in Subsection \ref{secao convergencia conexoes curvatura}, the geometric objects of $(G, F_\varepsilon)$ will be written with a subscript $\varepsilon$.

To verify the convergences, we need the next proposition, which is a version of Corollary 1 of \cite{Fukuoka-Setti} adapted to our situation.

\begin{proposition}
	\label{proposition_vertical_convergence}
	Let $(G,F)$ be a Lie group endowed with a left-invariant Finsler structure. Let $K_{\mathfrak{g}\setminus\{0\}}$ be a compact subset of $\{e\}\times (T_eG\setminus\{0\}) \cong \mathfrak{g}\setminus\{0\}$ and $\alpha = (\alpha_1,\ldots, \alpha_n)$ be a multi-index. Then,
	\begin{align*}
		D^\alpha \left( (\tilde{F}_\varepsilon)_e{}^2 \right) = \frac{\partial^{|\alpha|} (\tilde{F}_\varepsilon)_e{}^2}{\partial(y^1)^{\alpha_{1}} \cdots \partial(y^n)^{\alpha_{n}}}
	\end{align*}
	converges uniformly to
	\begin{align*}
		D^\alpha (F_e{}^2) = \frac{\partial^{|\alpha|} F_e{}^2}{\partial(y^1)^{\alpha_{1}} \cdots \partial(y^n)^{\alpha_{n}}}
	\end{align*}
	on $K_{\mathfrak{g}\setminus\{0\}}$.
\end{proposition}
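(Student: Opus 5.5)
The strategy is to work entirely on $\mathfrak{g}\cong\mathbb{R}^n$. Write $f := F_e{}^2$, which is now smooth on $\mathfrak{g}\setminus\{0\}$, and $h_\varepsilon := \eta_\varepsilon \ast f$. Then $(\tilde{F}_\varepsilon)_e$ is determined by the level set $\{h_\varepsilon = r_U^2\}$. In spherical coordinates on a cone $C(S_\theta)$ we have $(\tilde F_\varepsilon)_e{}^2(r,\theta) = r^2 r_U^2/\phi_\varepsilon(\theta)^2$ by \eqref{F_epsilon til em coordenadas esfericas}, where $\phi_\varepsilon$ is defined implicitly by $h_\varepsilon(\phi_\varepsilon(\theta),\theta)=r_U^2$. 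Likewise $F_e{}^2(r,\theta)=r^2r_U^2/\phi(\theta)^2$ with $f(\phi(\theta),\theta)=r_U^2$. Since spherical coordinates are a diffeomorphism on cones, and $K_{\mathfrak g\setminus\{0\}}$ is covered by finitely many compact pieces of such cones with $r$ in a compact subinterval of $(0,\infty)$, it suffices to prove that every $\theta$-derivative of $\phi_\varepsilon$ converges uniformly to the corresponding derivative of $\phi$ on compact subsets of $W_\theta$. The chain rule and the smoothness of $(r,\theta)\mapsto r^2 r_U^2/\phi^2$ for $\phi$ bounded away from $0$ (here $\phi_\varepsilon\in(1/2,2r_U/r_u)$) then transfer this to the Cartesian derivatives $D^\alpha$.

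The first step is to show that on the annulus $\mathcal A$, which lies away from the origin, $D^\beta h_\varepsilon \to D^\beta f$ uniformly for every multi-index $\beta$. Because $f$ is smooth on a neighbourhood of $\mathcal A$ of width greater than $\tau$ (the set $\|y\|\ge 1/2-\tau$ still avoids $0$), Remark \ref{derivada mollifier} gives $D^\beta h_\varepsilon = \eta_\varepsilon \ast D^\beta f$ there. Theorem \ref{propriedades da aplicacao mollifier} then yields uniform convergence on the compact set $\overline{\mathcal A}$.

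The second step is the convergence of $\phi_\varepsilon$ itself. By Item (2) of Lemma \ref{lema propriedade de F_epsilon}, the radial derivative satisfies $\partial_r h_\varepsilon > r_u^2/32$ on $\mathcal A$ uniformly in $\varepsilon$, and the same bound holds for $f$ by passing to the limit. Since $|f(\phi_\varepsilon,\theta)-f(\phi,\theta)| = |f(\phi_\varepsilon,\theta)-h_\varepsilon(\phi_\varepsilon,\theta)| \le \sup_{\mathcal A}|f-h_\varepsilon|$, the mean value theorem in $r$ gives $|\phi_\varepsilon-\phi| \le (32/r_u^2)\sup_{\mathcal A}|f-h_\varepsilon| \to 0$ uniformly.

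The third step handles the derivatives, and I expect it to be the main technical point. Implicit differentiation gives $\partial_{\theta^k}\phi_\varepsilon = -(\partial_{\theta^k}h_\varepsilon)/(\partial_r h_\varepsilon)$, evaluated at $(\phi_\varepsilon(\theta),\theta)$. By induction, each $D^\gamma_\theta \phi_\varepsilon$ is a polynomial in the derivatives of $h_\varepsilon$ up to order $|\gamma|$, evaluated at $(\phi_\varepsilon(\theta),\theta)$, divided by a power of $\partial_r h_\varepsilon$; the same universal formula holds for $\phi$ with $f$ in place of $h_\varepsilon$. The denominators are uniformly bounded below by $r_u^2/32$. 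Each numerator term converges uniformly because of three facts: $D^\beta h_\varepsilon \to D^\beta f$ uniformly (step one); $D^\beta f$ is uniformly continuous on $\overline{\mathcal A}$; and $\phi_\varepsilon\to\phi$ uniformly (step two). Hence $D^\beta h_\varepsilon(\phi_\varepsilon,\theta) \to D^\beta f(\phi,\theta)$ uniformly. Finally, quotients and products of uniformly convergent, uniformly bounded families with denominators bounded away from zero converge uniformly, which completes the argument. This is the same mechanism as Corollary 1 of \cite{Fukuoka-Setti}, with $\zeta_\varepsilon\ast F$ replaced by $\eta_\varepsilon\ast F^2$.
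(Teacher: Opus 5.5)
Your proposal is correct and is essentially the argument the paper intends: the paper gives no separate proof but defers to Corollary 1 of \cite{Fukuoka-Setti}. That corollary uses the same mechanism you do---uniform convergence of all derivatives of the convolution on the annulus $\mathcal A$ away from the origin, the uniform lower bound on the radial derivative, and implicit differentiation of the radial graph function $\phi_\varepsilon$ in spherical coordinates, transferred back through $(\tilde F_\varepsilon)_e{}^2 = r^2 r_U^2/\phi_\varepsilon^2$---and you correctly handle the one change, namely replacing $\zeta_\varepsilon \ast_v F$ by $\eta_\varepsilon \ast F_e{}^2$ on $\mathfrak g$, where there is no $x$-dependence to track.
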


Considering $(g_\varepsilon)_{ij}$, $\frac{\partial}{\partial y^k}(g_\varepsilon)_{ij}$ and $\frac{\partial}{\partial x^l}(g_\varepsilon)_{ij}$ in \eqref{align_formula_fundamental_tensor}, \eqref{align_Cartan_so-depende-de-Fe} and \eqref{align_derivada-gij-em-relacao-a-x} respectively, the only terms of the right-hand side of these equations that depend on $\varepsilon$ are the differentials of $(\tilde F_\varepsilon)_e{}^2$.
The terms $y^i$, $v_t^{\ s}$ and their derivatives don't depend on $\varepsilon$ (if we want to study the flag curvature, $z^i$ don't depend on $\varepsilon$ either). 
But we know that $d^m((\tilde{F}_\varepsilon)_e{}^2)_\mathfrak{y}(e_{a_1}, \ldots, e_{a_m}), a_1, \ldots, a_m \in \{1, \ldots, n\}$, converge uniformly to $d^m(F_e{}^2)_\mathfrak{y}(e_{a_1}, \ldots, e_{a_m})$ when $\varepsilon \rightarrow 0$ on compact subsets of $\mathfrak{g}\backslash \{0\}$ due to Proposition \ref{proposition_vertical_convergence}.
Therefore $(g_\varepsilon)_{ij}$, $\frac{\partial}{\partial y^k}(g_\varepsilon)_{ij}$ and $\frac{\partial}{\partial x^l}(g_\varepsilon)_{ij}$ converge uniformly to $g_{ij}$, $\frac{\partial}{\partial y^k}g_{ij}$ and $\frac{\partial}{\partial x^l}g_{ij}$ on compact subsets of $\mathfrak{g}\backslash \{0\}$, respectively.
We also have that $(\tilde{F}_\varepsilon)_e$ converges uniformly to $F_e$ on compact subsets of $\mathfrak{g}\backslash \{0\}$.

Notice that the same idea is applicable for every object defined in Theorem \ref{theorem_geometric_objects}, and we provide this result for future reference.  

\begin{proposition}
	\label{proposition_convergence_objects_lie_groups}
	Let $(G,F)$ be a Lie group endowed with a left-invariant Finsler structure, $\boldsymbol{x}: U \rightarrow \mathbb{R}^n$ be a coordinate system in a neighborhood of $e \in G$, and $(\boldsymbol{x}, \boldsymbol{y})$ be the corresponding natural coordinate system on $TU$. Consider the geometric objects defined in Theorem \ref{theorem_geometric_objects}.
	Denote the components of these objects by $T^{i_1 i_2 \ldots i_m}_{j_1 j_2 \ldots j_s}$ and the corresponding objects of $(G,F_\varepsilon)$ by $(T_\varepsilon)^{i_1 i_2 \ldots i_m}_{j_1 j_2 \ldots j_s}$.
	Then  $(T_\varepsilon)^{i_1 i_2 \ldots i_m}_{j_1 j_2 \ldots j_s}$ converges uniformly to  $T^{i_1 i_2 \ldots i_m}_{j_1 j_2 \ldots j_s}$ on compact subsets of  $TU\backslash 0$.
\end{proposition}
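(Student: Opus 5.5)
The plan is to use Theorem \ref{theorem_geometric_objects}: each component $T^{i_1\ldots i_m}_{j_1\ldots j_s}$ is a fixed rational expression $\mathcal{R}$ in five kinds of building blocks. These are $F_e(\mathfrak y)$; the multilinear quantities $d^m(F_e{}^2)_{\mathfrak y}(e_{a_1},\ldots,e_{a_m})$ with $\mathfrak y = y^i v_i{}^a(x) e_a$; the coordinates $y^i$; the functions $v_j{}^a(x)$; and their $x$-derivatives. The derivation of that theorem only used the identity \eqref{align_formula_Fquadrado_algebra}, which holds for any left-invariant Finsler structure. So the same expression $\mathcal R$, with $F_e$ replaced by $(\tilde F_\varepsilon)_e$, gives $(T_\varepsilon)^{i_1\ldots i_m}_{j_1\ldots j_s}$, since $F_\varepsilon$ is left-invariant by \eqref{F epsilon definida em TG}. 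In $\mathcal R$ only the blocks built from $F_e$ depend on $\varepsilon$; $y^i$, $v_j{}^a$ and their derivatives are independent of $\varepsilon$ and continuous.

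Fix a compact $K\subset TU\setminus 0$. The map $\Phi(x,y)=y^iv_i{}^a(x)e_a$ is continuous and, since $(v_i{}^a(x))$ is invertible, it sends $K$ into $\mathfrak g\setminus\{0\}$. Hence $K_{\mathfrak g\setminus\{0\}}:=\Phi(K)$ is compact. On it, Proposition \ref{proposition_vertical_convergence} gives uniform convergence of every $D^\alpha((\tilde F_\varepsilon)_e{}^2)$ to $D^\alpha(F_e{}^2)$. Each $d^m(\cdot)_{\mathfrak y}(e_{a_1},\ldots,e_{a_m})$ is a finite linear combination, with constant coefficients, of such partial derivatives in the coordinates of the basis $\mathcal B$, so these converge uniformly on $K$ after composing with $\Phi$. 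For the block $F_e$ itself, I would use Corollary \ref{F_epsilon^til converge uniforme}, adapted to $T_eG$ as in the previous theorem, or take the square root of the $\alpha=0$ case. Because $F_e$ is bounded below by a positive constant on $K_{\mathfrak g\setminus\{0\}}$, square roots and reciprocals pass to the limit uniformly. This disposes of the factor $1/F$ appearing in $\ell^i$ and $C_{ijk}$.

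Then I would conclude with an elementary stability lemma for the algebra. If finitely many families $f_\varepsilon^{(k)}$ converge uniformly on $K$ to continuous $f^{(k)}$, then sums and products converge uniformly, because the limits are bounded on compact sets and the approximants are eventually uniformly bounded. Quotients also converge uniformly, provided the limiting denominator is bounded away from zero on $K$.

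The only delicate point, which I regard as the main obstacle, is the denominators. They are powers of $F$ and $\det(g_{ij})$; the latter enters via $g^{ij}$ through the cofactor formula. In the flag curvature the denominator is $g(y,y)g(z,z)-g(y,z)^2$. On $K$ these limits are continuous and strictly positive, because $g$ is positive definite on $TU\setminus 0$, $F>0$ there, and (for flags) $y,z$ are independent. By compactness they have a positive minimum, so the $\varepsilon$-denominators stay bounded away from zero for small $\varepsilon$ by uniform convergence. For the flag curvature one would restrict to a compact set of flags, e.g.\ normalized $y,z$ with the Gram determinant bounded below, or use that $K$ depends only on $y$ and $\mathrm{span}\{y,z\}$ to reduce to such a compact set. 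Applying the lemma to $\mathcal R$ then yields the uniform convergence of $(T_\varepsilon)^{i_1\ldots i_m}_{j_1\ldots j_s}$ to $T^{i_1\ldots i_m}_{j_1\ldots j_s}$ on $K$.
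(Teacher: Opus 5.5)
Your proposal is correct and follows the paper's route. You use Theorem \ref{theorem_geometric_objects} to write each component as a universal rational expression in which only the $F_e$-dependent blocks depend on $\varepsilon$. You then conclude from Proposition \ref{proposition_vertical_convergence} together with the uniform convergence $(\tilde F_\varepsilon)_e \to F_e$.

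You also supply two details the paper leaves implicit. The first is the passage from compact subsets of $TU\setminus 0$ to compact subsets of $\mathfrak g\setminus\{0\}$ via $(x,y)\mapsto y^i v_i{}^a(x)\, e_a$; the paper loosely states the convergence "on compact subsets of $\mathfrak g\setminus\{0\}$". The second is the uniform positive lower bounds on the denominators $F$ and $\det(g_{ij})$, which are needed for the quotients to converge uniformly.
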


Versions of Theorems \ref{convergencia uniforme Chern} and \ref{curvatura flag converge uniforme} for Lie groups are straightforward from
Proposition \ref{proposition_convergence_objects_lie_groups}
and Theorems 7 and 8 of \cite{Fukuoka-Setti}.
\begin{theorem} 
	\label{convergencia uniforme Chern grupos de Lie}
	Let $(G,F)$ be a Lie group with a left-invariant Finsler structure and $F_\varepsilon$ be a left-invariant mollifier smoothing of $F$. 
	Let $X$ be an arbitrary section into $TG\backslash 0$ and $E$ be an arbitrary section into $\pi^*TG$.
	Then $\left( \nabla_\varepsilon \right)_X E$
	converges uniformly to $\nabla_X E$ on compact subsets $K_{TG \setminus 0}$ of $TG\setminus 0$.
	The statement also holds for the Cartan connection, Hashiguchi connection and Berwald connection.
\end{theorem}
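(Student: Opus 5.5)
The plan is to reduce the statement to the componentwise convergences of Proposition \ref{proposition_convergence_objects_lie_groups} and then globalize them using left invariance.

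\emph{Local formula.} Fix a compact $K_{TG\setminus 0}$. Cover its projection to $G$ by finitely many charts; it is enough to prove uniform convergence on compact pieces lying over a single coordinate domain $(U,\boldsymbol{x})$. On $TU\setminus 0$ write $X = X^k \frac{\partial}{\partial x^k} + \tilde X^k \frac{\partial}{\partial y^k}$ and $E = E^j \frac{\partial}{\partial x^j}$, with $E^j$ smooth functions on $TU\setminus 0$. For the Chern connection, torsion freeness gives $\omega_j{}^i = \Gamma^i{}_{jk}\, dx^k$, so
\begin{align*}
	(\nabla_\varepsilon)_X E - \nabla_X E = \left((\Gamma_\varepsilon)^i{}_{jk} - \Gamma^i{}_{jk}\right) E^j X^k \frac{\partial}{\partial x^i},
\end{align*}
because the term $X(E^i)$ does not depend on $\varepsilon$.

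\emph{Other connections.} For the Cartan, Hashiguchi and Berwald connections, Definition \ref{Cartan Hasiguchi Berwarld} adds to the Chern form the terms $C^i{}_{jk}\,\delta y^k$ and/or $\dot A^i{}_{jk}\,dx^k$. Evaluating $\delta y^k = dy^k + N^k{}_l\, dx^l$ on $X$ gives $\tilde X^k + N^k{}_l X^l$. The difference of the $\varepsilon$ and limit connections is therefore a finite sum of products of bounded, $\varepsilon$-independent factors $E^j$, $X^k$, $\tilde X^k$ with differences such as $(C_\varepsilon)^i{}_{jk}(N_\varepsilon)^k{}_l - C^i{}_{jk} N^k{}_l$. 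Write $C^i{}_{jk} = g^{is}C_{sjk}$, and likewise for $\dot A$.

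\emph{Convergence.} By Proposition \ref{proposition_convergence_objects_lie_groups}, every component $\Gamma_\varepsilon$, $N_\varepsilon$, $g_\varepsilon^{ij}$, $(C_\varepsilon)_{ijk}$, $\dot A_\varepsilon$ converges uniformly to its limit on compact subsets of $TU\setminus 0$. The limits are continuous, hence bounded, on the compact set, so all products and sums converge uniformly as well.

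\emph{Coordinate independence.} Take $F_\pi$ on $\pi^\ast TU$ to be the Euclidean norm of the components in $\frac{\partial}{\partial x^i}$. The previous step then gives Definition \ref{definition_uniform_convergence} on each piece. Combining the finitely many pieces, with the Remark after Definition \ref{definition_uniform_convergence} letting us use a single global $F_\pi$, finishes the argument; this mirrors Theorem 7 of \cite{Fukuoka-Setti}.

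\emph{Main obstacle.} Proposition \ref{proposition_convergence_objects_lie_groups} is stated only for a chart $U$ around $e$, while $K_{TG\setminus 0}$ may lie over arbitrary points. I would use left invariance: for $x_0 \in G$, take the chart $\boldsymbol{x}\circ L_{x_0^{-1}}$ on $L_{x_0}(U)$. Since $F$ and $F_\varepsilon$ are both left invariant, their coefficients in this chart coincide with those in $\boldsymbol{x}$ under the identification $dL_{x_0^{-1}}$. Uniform convergence therefore transfers to compact subsets over $L_{x_0}(U)$, and finitely many such translates cover the projection of $K_{TG\setminus 0}$. One also needs the compact sets in $TU\setminus 0$ to map to compact subsets of $\mathfrak g\setminus\{0\}$ under $\varphi^{-1}$; this holds because $\varphi$ is a diffeomorphism preserving the zero section.
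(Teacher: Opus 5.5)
Your proposal is correct and follows the paper's route. The paper only says the result is straightforward from Proposition \ref{proposition_convergence_objects_lie_groups} together with the argument of Theorem 7 of \cite{Fukuoka-Setti}, which is exactly the local formula plus componentwise uniform convergence plus coordinate-independence argument you spell out. Your left-translation step for charts away from $e$ is valid, since left invariance makes the coordinate representations of $F$ and $F_\varepsilon$ in $\boldsymbol{x}\circ L_{x_0^{-1}}$ identical to those in $\boldsymbol{x}$, but it is not strictly needed: the computation behind \eqref{align_formula_Fquadrado_algebra} uses only $\frac{\partial}{\partial x^i} = v_i{}^a e_a$ and therefore works in any chart.
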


\begin{theorem}
	\label{curvatura flag converge uniforme grupos de Lie}
	Let $(G,F)$ be a Lie group with a left-invariant Finsler structure and $F_\varepsilon$ be a left-invariant mollifier smoothing of $F$.  
	Let $K_G$ be a compact subset of $G$ and $\delta >0$.
	Then there exist $\beta > 0$ such that
	\begin{align*}
		\sup_{x \in K_G} \sup_{(y,z) \in \mathcal F_x} \vert K(x,y,z) - K_\varepsilon(x,y,z) \vert < \delta
	\end{align*}
	for every $\varepsilon \in (0, \beta)$.
\end{theorem}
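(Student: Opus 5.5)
The plan is to follow the proof of Theorem 8 of \cite{Fukuoka-Setti}, feeding it Proposition \ref{proposition_convergence_objects_lie_groups} in place of Lemma \ref{several uniform convergences}. Left-invariance also allows a shortcut: both $K$ and $K_\varepsilon$ are invariant under left translations. Indeed, $L_{x'}$ is an isometry of $(G,F)$ and of $(G,F_\varepsilon)$ by \eqref{left-invariant} and \eqref{F epsilon definida em TG}, and isometries preserve the Chern connection and hence the flag curvature. This gives
\[
K(x,y,z)=K\bigl(e,(dL_{x^{-1}})_x y,(dL_{x^{-1}})_x z\bigr),
\]
and the same identity for $K_\varepsilon$. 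So the supremum over $x\in K_G$ reduces to a supremum over flags at the single point $e$, and the compact set $K_G$ plays no further role.

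\textbf{Step 1: reduce to a compact set of flags at $e$.} Work at $e$ in the coordinate system $\boldsymbol{x}$ of Proposition \ref{proposition_convergence_objects_lie_groups}. Since $K(y,z)$ depends only on $y$ and $\spann\{y,z\}$, and is homogeneous of degree $0$ in $y$, we may assume the following normalisation:
\begin{itemize}
\item $\Vert y\Vert=1$ and $\Vert z\Vert=1$;
\item $z$ is orthogonal to $y$ with respect to the Euclidean metric induced by $\boldsymbol{x}$.
\end{itemize}
To justify this, replace $z$ by $z-\langle z,y\rangle y$ and normalise. This changes $z$ only within $\spann\{y,z\}$, so neither $K$ nor $K_\varepsilon$ changes. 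The set $\mathcal K$ of such pairs $(y,z)$ is compact, and its first factor lies in $T_eG\setminus\{0\}$.

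\textbf{Step 2: uniform convergence of numerator and denominator.} By Theorem \ref{theorem_geometric_objects}, the numerator $z^i y^j (R_\varepsilon)_{jikl}y^l z^k$ is a polynomial in $y,z$ with coefficients $(R_\varepsilon)_{jikl}(e,y)$. The denominator $g_\varepsilon(y,y)g_\varepsilon(z,z)-g_\varepsilon(y,z)^2$ is built from $(g_\varepsilon)_{ij}(e,y)$. Proposition \ref{proposition_convergence_objects_lie_groups} gives uniform convergence of these coefficients on the compact set $\{e\}\times S$, where $S$ is the Euclidean unit sphere. The factors $y^i,z^i$ are bounded and independent of $\varepsilon$. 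Hence the numerator and the denominator converge uniformly on $\mathcal K$ to the corresponding expressions for $F$.

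\textbf{Step 3 (main obstacle): a uniform lower bound for the denominators.} This is where quotient estimates can break down. The limit denominator $D(y,z)=g(y,y)g(z,z)-g(y,z)^2$ is continuous and positive on $\mathcal K$, because $g_{(e,y)}$ is positive definite and $y,z$ are linearly independent. Therefore $D\ge m>0$ on $\mathcal K$. Uniform convergence gives $D_\varepsilon\ge m/2$ for all small $\varepsilon$. The numerator $N$ of $K$ is bounded on $\mathcal K$, say $|N|\le C$. The elementary estimate
\[
\Bigl|\tfrac{N_\varepsilon}{D_\varepsilon}-\tfrac{N}{D}\Bigr|\le \tfrac{|N_\varepsilon-N|}{D_\varepsilon}+\tfrac{|N|\,|D-D_\varepsilon|}{D D_\varepsilon}
\]
then yields a $\beta>0$ with $|K-K_\varepsilon|<\delta$ on $\mathcal K$ for every $\varepsilon\in(0,\beta)$. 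By Step 1 this bound holds for all $x\in K_G$ and all flags in $\mathcal F_x$.

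If one prefers not to use the isometry-invariance, the same argument works directly on the compact set of normalised flags over $K_G$. This requires covering $K_G$ by finitely many left translates of $U$, transporting Proposition \ref{proposition_convergence_objects_lie_groups} by left translations, and taking the minimum of the resulting $\beta$'s.
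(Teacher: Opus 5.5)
Your proof is correct. Steps 1--3 follow the template of Theorem 8 of \cite{Fukuoka-Setti}: normalise flags to a compact set, obtain uniform convergence of numerator and denominator from the coordinate-wise convergence, and bound the limit denominator away from zero. The paper's one-line proof invokes exactly that template, fed with Proposition \ref{proposition_convergence_objects_lie_groups}.

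The genuine difference is your preliminary reduction to the identity. The paper's route runs the template over the compact set $K_G$. This implicitly needs coordinate-wise convergence in finitely many charts covering $K_G$, whereas Proposition \ref{proposition_convergence_objects_lie_groups} is only stated for a chart around $e$ (its proof via \eqref{align_formula_Fquadrado_algebra} does work in any chart). You instead use that $L_{x'}$ is an isometry of both $(G,F)$ and $(G,F_\varepsilon)$. Hence $K$ and $K_\varepsilon$ are left-invariant, and only flags at $e$ matter.

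This costs you the standard fact that the Chern connection, and hence the flag curvature, is natural under Finsler isometries. In return you can use the Proposition exactly as stated. It also yields a stronger conclusion: the supremum may be taken over all $x\in G$, not merely over a compact $K_G$.

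The paper's route, for its part, has the advantage of being literally the same argument as Theorem \ref{curvatura flag converge uniforme} for general manifolds, with no appeal to the group structure beyond the local convergence of components.
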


\end{document}